\documentclass[reqno]{amsart}
\usepackage{amsmath}
\usepackage{amsfonts}
\usepackage{amstext}
\usepackage{amsbsy}
\usepackage[english]{babel}
\usepackage{amsopn}
\usepackage{amsxtra}
\usepackage{upref}
\usepackage{amsthm}
\usepackage{amsmath}
\usepackage{amssymb}
\usepackage[shortlabels]{enumitem}
\usepackage{bbm}
\usepackage[pdftex]{graphicx}
\usepackage{hyperref}
\usepackage{mathrsfs}
\usepackage{verbatim}
\usepackage{graphicx}
\usepackage{tikz}
\usepackage{euscript}
\usepackage{enumitem}

\numberwithin{equation}{section}

\parindent=0cm
\parskip=3mm

\newtheorem{teo}{Theorem}[section]
\newtheorem*{teo*}{Theorem}
\newtheorem{prop}[teo]{Proposition}
\newtheorem{lema}[teo]{Lemma}
\newtheorem{coro}[teo]{Corollary}

\newtheorem{defi}[teo]{Definition}

\newtheorem{que}{Question}
\newtheorem{assu}{Assumption}

\newtheorem*{claim*}{Claim}

\DeclareMathSymbol{\varnothing}{\mathord}{AMSb}{"3F}

\def\F{{\mathcal{F}}}

\def\E{{\mathbb E}}
\def\R{{\mathbb R}}

\def\N{{\mathbb N}}

\DeclareMathOperator*{\interior}{int}
\DeclareMathOperator*{\var}{var}
\DeclareMathOperator*{\esssup}{ess\,sup}

\DeclareMathOperator*{\diam}{diam}

\title{Dimension of Gibbs measures with infinite entropy}
\date{\today}

\author{Felipe P\'erez} \address{Felipe P\'erez: School of Mathematics, University of Bristol, University Walk, Clifton, Bristol, BS8 1TW, UK}
\email{\href{mailto:fp16987@bristol.ac.uk}{fp16987@bristol.ac.uk}}

\begin{document}

\begin{abstract}
We study the Hausdorff dimension of Gibbs measures with infinite entropy with respect to maps of the interval with countably many branches. We show that under simple conditions, such measures are symbolic-exact dimensional, and provide an almost sure value for the symbolic dimension. We also show that the lower local dimension dimension is almost surely equal to zero, while the upper local dimension is almost surely equal to the symbolic dimension. In particular, we prove that a large class of Gibbs measures with infinite entropy for the Gauss map have Hausdorff dimension zero and packing dimension equal to $1/2$, and so such measures are not exact dimensional.
\end{abstract}

\maketitle

\section{Introduction}
In this paper we study the dimension of measures invariant under a certain class of maps of the unit interval $[0,1]$: Expanding Markov Renyi (EMR) maps. These maps $T\colon [0,1]\to [0,1]$ admit representations by means of symbolic dynamics, and satisfy smoothness properties that allow us to use ergodic theoretic methods to study their geometric properties. Given an ergodic T-invariant probability measure $\mu$, we are interested in the pointwise behavior of the \emph{local dimension}
\begin{align*}
    d(x)=\lim_{r\to 0}\dfrac{\log \mu(B(x,r))}{\log r}.
\end{align*}
Knowledge of the almost sure behavior of the local dimension yields information about the \emph{Hausdorff} and the \emph{packing} dimension of the measure. There are two dynamical quantities which are particularly relevant when studying the local dimension of such measures: the \emph{metric entropy} $h_\mu$ (or simply the entropy) and the \emph{Lyapunov exponent} $\lambda_\mu$ of $(T,\mu)$. The connection between the entropy and the Lyapunov exponent and the local dimension is well understood when the entropy is finite. Our goal is to investigate the case when both of these quantities are infinite.

Formulae relating the dynamical invariants $h_\mu,\lambda_\mu$ and the local dimension have been extensively studied for the last few decades in the case $h_\mu<\infty$. For Bernoulli measures invariant under the Gauss map, Kinney and Pitcher proved in \cite{kinney1966dimension} that if the measure $\mu$ is defined by a probability vector $p = \{p_i\}$, the Hausdorff dimension of $\mu$ can be computed with the formula
\begin{align*}
    \dim_H \mu = \dfrac{-\sum_{n=1}^\infty p_n\log p_n}{2\int_0^1 |\log x|\mathrm{d}\mu(x)}
\end{align*}
provided that $\sum_{n=1}^\infty p_n\log n<\infty$. In \cite{ledrappier1985dimension} the authors proved that for a $\mathcal{C}^1$ map $T\colon [0,1]\to [0,1]$ where $T$ and $T'$ are piecewise monotonic and the Lyapunov exponent $\lambda_\mu$ is positive, if $\mu$ is an invariant ergodic probability measure, then we have that
\begin{align*}
\lim_{r\to 0} \dfrac{\log \mu(B(x,r))}{\log r}=\dfrac{h_\mu}{\lambda_\mu}.
\end{align*}
In particular, $\dim_H \mu = h_\mu / \lambda_\mu$.
Other versions of the formula were proved by Young and Hofbauer, Raith in \cite{young1982dimension} and \cite{hofbauer1992hausdorff}, among others. In all of these examples, it is assumed  $0<\lambda_\mu < \infty$. In the context of countable Markov systems, Mauldin and Urbanski proved the following theorem:

\begin{teo}[Volume Lemma, \cite{mauldin2003graph}]\label{volumeLemma}
Let $(X,T)$ be a countable Markov shift coded by the shift in countably many symbols $(\Sigma,\sigma)$. Suppose that $\mu$ is a Borel shift-invariant ergodic probability measure on $\Sigma$ such that at least one of the numbers $H(\mu,\alpha)$ or $\lambda_\mu$ is finite, where $H(\mu,\alpha)$ is the entropy of $\mu$ with respect to the natural partition $\alpha$ in cylinders of $\Sigma$. Then
\begin{align*}
    \dim_H (\mu \circ \pi^{-1}) = \dfrac{h_\mu}{\lambda_\mu},
\end{align*}
where $\pi \colon \Sigma\to X$ is the coding map.
\end{teo}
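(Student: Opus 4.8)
The plan is to compute the lower local dimension of $\nu:=\mu\circ\pi^{-1}$ pointwise and read off $\dim_H\nu$ from it. Throughout put $x=\pi(\omega)$, $\omega|_n=(\omega_0,\dots,\omega_{n-1})$, and let $I_n(\omega)=\pi([\omega|_n])\subseteq[0,1]$ be the cylinder interval containing $x$. (If $\mu$ has an atom it is, by ergodicity, carried by a single periodic orbit, so $h_\mu=0$ and $\dim_H\nu=0=h_\mu/\lambda_\mu$ trivially; assume henceforth that $\mu$ is non-atomic, so that $\nu(I_n(\omega))=\mu([\omega|_n])$.) Note that the hypothesis forces $h_\mu<\infty$ in every case: if $H(\mu,\alpha)<\infty$ this is immediate since $h_\mu\le H(\mu,\alpha)$, and if instead $\lambda_\mu<\infty$ then $h_\mu\le\lambda_\mu<\infty$ by Ruelle's inequality for $T$. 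The goal is to show $\underline d_\nu(x):=\liminf_{r\to0}\log\nu(B(x,r))/\log r$ equals $h_\mu/\lambda_\mu$ at $\nu$-a.e.\ $x$; the conclusion $\dim_H\nu=h_\mu/\lambda_\mu$ then follows from the standard fact that a measure whose lower local dimension is $\nu$-a.e.\ equal to a constant $s$ has $\dim_H\nu=s$ (Billingsley's lemma: the mass distribution principle and its converse). It is convenient to split on $\lambda_\mu$: when $\lambda_\mu=\infty$ only the easy half below is needed and it forces $\underline d_\nu\equiv0=h_\mu/\lambda_\mu$; when $\lambda_\mu<\infty$ the genuine ratio is obtained.

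The computation rests on three inputs, all holding at a $\mu$-generic $\omega$. First, since $h_\mu<\infty$, the Shannon--McMillan--Breiman theorem applies directly and gives $-\tfrac1n\log\mu([\omega|_n])\to h_\mu$. Second, Birkhoff's ergodic theorem applied to the nonnegative geometric potential $\phi=\log|T'\circ\pi|$, whose $\mu$-integral is $\lambda_\mu\in(0,\infty]$, gives $\tfrac1n\log|(T^n)'(x)|=\tfrac1n\sum_{k=0}^{n-1}\phi(\sigma^k\omega)\to\lambda_\mu$ and --- when $\lambda_\mu<\infty$, so $\phi\in L^1(\mu)$ --- the telescoping corollary $\tfrac1n\phi(\sigma^n\omega)\to0$. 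Third, the R\'enyi (bounded distortion) condition provides $K\ge1$ with $|(T^n)'(y)|/|(T^n)'(z)|\le K$ whenever $y,z$ lie in a common cylinder interval. Combining the second and third with the mean value theorem --- regarding $T^n$ as a $K$-distorted diffeomorphism of $I_n(\omega)$ onto $J_n:=T^n(I_n(\omega))$ that sends $x$ to $\pi(\sigma^n\omega)$ and $\partial I_n(\omega)$ to $\partial J_n$ --- we get, for $\mu$-a.e.\ $\omega$, both $\tfrac1n\log\diam I_n(\omega)\to-\lambda_\mu$ and
\begin{align*}
\operatorname{dist}\bigl(x,\partial I_n(\omega)\bigr)\ \asymp_K\ |(T^n)'(x)|^{-1}\,\operatorname{dist}\bigl(\pi(\sigma^n\omega),\partial J_n\bigr),
\end{align*}
and hence --- using the EMR control on how fast $|T'|$ grows near the branch points, together with $\tfrac1n\phi(\sigma^n\omega)\to0$ --- also $\tfrac1n\log\operatorname{dist}(x,\partial I_n(\omega))\to-\lambda_\mu$ when $\lambda_\mu<\infty$.

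Fix a $\mu$-generic $\omega$. For the bound $\underline d_\nu(x)\le h_\mu/\lambda_\mu$: along $r_k=2\diam I_k(\omega)\to0$ one has $I_k(\omega)\subseteq B(x,r_k)$, so $\nu(B(x,r_k))\ge\mu([\omega|_k])$ and, dividing by $\log r_k<0$,
\begin{align*}
\frac{\log\nu(B(x,r_k))}{\log r_k}\ \le\ \frac{\log\mu([\omega|_k])}{\log r_k}\ =\ \frac{\tfrac1k\log\mu([\omega|_k])}{\tfrac1k\log r_k}\ \longrightarrow\ \frac{h_\mu}{\lambda_\mu},
\end{align*}
which is $0$ when $\lambda_\mu=\infty$ (as $h_\mu<\infty$). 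For the reverse bound, needed only when $\lambda_\mu<\infty$: given small $r>0$, let $n=n(x,r)$ be the largest integer with $B(x,r)\subseteq I_n(\omega)$; since $\operatorname{dist}(x,\partial I_n(\omega))$ decreases to $0$ in $n$ with $x$ interior to each $I_n(\omega)$, this $n$ is finite and $n(x,r)\to\infty$ as $r\to0$. Maximality gives $r>\operatorname{dist}(x,\partial I_{n+1}(\omega))$, and $B(x,r)\subseteq I_n(\omega)$ gives $\nu(B(x,r))\le\mu([\omega|_n])$, so for $r$ small
\begin{align*}
\frac{-\log\nu(B(x,r))}{-\log r}\ \ge\ \frac{-\log\mu([\omega|_n])}{-\log\operatorname{dist}(x,\partial I_{n+1}(\omega))}\ =\ \frac{n\bigl(h_\mu+o(1)\bigr)}{(n+1)\bigl(\lambda_\mu+o(1)\bigr)}\ \longrightarrow\ \frac{h_\mu}{\lambda_\mu}
\end{align*}
as $r\to0$ (whence $n\to\infty$). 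Combining the two bounds, $\underline d_\nu\equiv h_\mu/\lambda_\mu$ $\nu$-a.e., which proves the theorem.

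The main obstacle is the third input, the sharp rate $\tfrac1n\log\operatorname{dist}(x,\partial I_n(\omega))\to-\lambda_\mu$. With countably many branches a ball $B(x,r)$ generically protrudes from the deepest cylinder containing it and, near an endpoint of a parent cylinder, can meet infinitely many cylinders of the next generation, so one cannot bound $\nu(B(x,r))$ by the mass of a single cylinder of the ``right'' depth; comparing $r$ with $\operatorname{dist}(x,\partial I_{n+1}(\omega))$ rather than with $\diam I_{n+1}(\omega)$ circumvents counting those neighbours, but it only succeeds because $\operatorname{dist}(x,\partial I_{n+1}(\omega))=e^{-(n+1)\lambda_\mu+o(n)}$, and that identity rests on $\tfrac1n\phi(\sigma^n\omega)\to0$, i.e.\ on $\phi\in L^1(\mu)$, i.e.\ on $\lambda_\mu<\infty$. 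This is precisely where the hypothesis that one of $H(\mu,\alpha),\lambda_\mu$ be finite cannot be dropped, and it is exactly the failure mode that the rest of the paper analyses when both quantities are infinite.
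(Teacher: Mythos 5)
The paper itself gives no proof of this statement (it is quoted from Mauldin--Urba\'nski), so I judge your argument on its own terms; its skeleton (local dimensions, SMB, Birkhoff, bounded distortion, Billingsley's lemma) is the standard Billingsley-type route, but there is a genuine gap at exactly the step you yourself call the main obstacle. Your third input, $\tfrac1n\log\operatorname{dist}(x,\partial I_n(\omega))\to-\lambda_\mu$, is justified on only one side of the boundary. Writing $J_n=T^n(I_n(\omega))=[0,1]$, the mean value theorem and distortion reduce it to a sub-exponential lower bound on $\operatorname{dist}(T^nx,\{0,1\})$. Near the accumulation endpoint $0$ your mechanism works, since there $\operatorname{dist}(y,0)\gtrsim 1/|T'(y)|$ and $\tfrac1n\phi(\sigma^n\omega)\to0$ applies. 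But near the other endpoint $1$ --- a fixed point of the first branch, where $|T'|$ is bounded --- ``EMR control on how fast $|T'|$ grows near the branch points'' gives nothing: $\operatorname{dist}(y,1)$ can be arbitrarily small while $1/|T'(y)|\asymp 1$. If $T^nx$ is, say, $e^{-n^2}$-close to $1$ for infinitely many $n$ (symbolically: runs of the first symbol of length $\gg n$ beginning near position $n$), then along those $n$ one has $\operatorname{dist}(x,\partial I_{n+1})\ll e^{-(n+1)(\lambda_\mu+o(1))}$, and for radii $r$ just above this distance your inequality $-\log r\le(n+1)(\lambda_\mu+o(1))$, hence the lower bound $\underline d_\nu\ge h_\mu/\lambda_\mu$, breaks. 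Since $\mu$ is only assumed ergodic (not Gibbs), a crude Borel--Cantelli estimate does not rescue this: $-\log\operatorname{dist}(\cdot,1)$ need not be $\mu$-integrable under the stated hypotheses, so $\sum_n\mu\{y:\operatorname{dist}(y,1)<e^{-\epsilon n}\}$ may diverge. The fact you need is true, but it requires its own argument --- for instance, inducing on run-starts of the first symbol, noting that the induced run-length variable has finite mean, and applying the first Borel--Cantelli lemma in the induced (Kac) system to exclude runs of length $\ge\epsilon n$ starting at positions of order $n$ --- or one follows Mauldin--Urba\'nski's covering argument, which avoids the distance-to-the-boundary comparison altogether. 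As written, this step is asserted rather than proved.

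A second, more repairable, gap is the use of Shannon--McMillan--Breiman. You invoke it ``since $h_\mu<\infty$'', but SMB along the countable partition $\alpha$ requires $H(\mu,\alpha)<\infty$ (and identifying the limit with $h_\mu$ uses that $\alpha$ is a generator of finite static entropy). In the case where only $\lambda_\mu<\infty$ is assumed you should derive $H(\mu,\alpha)<\infty$ directly, e.g. from $H(\mu,\alpha)\le-\sum_a\mu([a])\log|I(a)|\le\lambda_\mu+\log D$, which uses only $\sum_a|I(a)|\le 1$ and the Renyi estimate; the appeal to Ruelle's inequality does not give what SMB needs, and it is itself a nontrivial assertion for countably many branches (indeed the paper deduces its version of that inequality from this very Volume Lemma, so leaning on it here is uncomfortably close to circular).
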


The coding map can be interpreted as a means to go from the symbolic representation of the dynamics to the geometric space. When the local dimension exists and is constant almost everywhere, we say that the measure $\mu$ is \emph{exact dimensional}.

The case when $\lambda_\mu=0$ was studied by Ledrappier and Misiurewicz in \cite{ledrappier1985dimension}, wherein they constructed a $\mathcal{C}^r$ map of the interval and a non-atomic ergodic invariant measure which has zero Lyapunov exponent and is such that the local dimension does not exist almost everywhere. More precisely, they show that the lower local dimension and upper local dimension are not equal:
\begin{align*}
\underline{d}_\mu(x)\liminf_{r\to 0} \dfrac{\log \mu(B(x,r))}{\log r} < \limsup_{r\to 0} \dfrac{\log \mu(B(x,r))}{\log r} = \overline{d}_\mu(x)
\end{align*}
almost everywhere. For this construction, the authors consider a class of unimodal maps (Feigenbaum's maps).

We investigate the Hausdorff dimensions of invariant ergodic measures for piecewise expanding maps of the interval with countably many branches. In particular, we focus on maps exhibiting similar properties to the Gauss map and measures with infinite entropy and infinite Lyapunov exponent. Our main result is (see next section for the definitions):

\begin{teo*}
Let $T\colon [0,1]\to [0,1]$ be a Gauss-like map and $\mu$ be an infinite entropy Gibbs measure satisfying assumption \ref{assumption} and such that the decay ratio $s$ exists . Then $\underline{d}_\mu(x)=0,\overline{d}_\mu(x)=s$ almost everywhere. 
\end{teo*}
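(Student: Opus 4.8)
The plan is to attack the upper and lower local dimensions separately, passing back and forth between the symbolic space $\Sigma$ and the interval via the coding map $\pi$, and exploiting the infinite-entropy hypothesis together with the Gibbs property to control the measures of cylinders. Throughout I would work with the natural partition into cylinders $[i_1,\dots,i_n]$ and compare $\mu(B(x,r))$ with $\mu$ of the cylinder around $x$ of appropriate depth. A guiding heuristic: for a Gibbs measure the log-measure of an $n$-cylinder is comparable to $-\sum_{k} \log|T'|$ along the orbit plus a pressure term, and since the entropy is infinite, the fluctuations in cylinder length and in $\mu$-mass are driven by the tails of the digit distribution; "decay ratio $s$" should be exactly the almost sure value of $\log\mu([i_1,\dots,i_n]) / \log|I_{i_1,\dots,i_n}|$ along typical orbits, i.e. the symbolic dimension.

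First I would establish the upper bound $\overline d_\mu(x)\le s$ and the matching lower bound $\overline d_\mu(x)\ge s$ a.e., giving $\overline d_\mu(x)=s$. For the easy direction one takes $r=|I_{i_1,\dots,i_n}|$ for the cylinder $I_n(x)$ containing $x$: then $\mu(B(x,r))\ge \mu(I_n(x))$ up to bounded distortion of neighbouring cylinders, and the Gibbs/bounded-distortion estimates plus a Birkhoff-type argument for $\tfrac1n\log|I_n(x)|$ and $\tfrac1n\log\mu(I_n(x))$ (or rather their ratio, since both may tend to $\infty$ in absolute value) give $\limsup \ge s$. For the reverse inequality I would need to rule out that balls of a fixed radius $r$ catch unexpectedly little mass; here one chooses, for given small $r$, the largest $n=n(x,r)$ with $|I_n(x)|\ge r$, controls how many cylinders of generation $n$ or $n+1$ meet $B(x,r)$ (this is where Gauss-like geometry — the nested interval structure and comparability of a cylinder with the gap to its neighbours — is used), and shows the extra cylinders contribute at most a comparable amount of mass. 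Combining yields $\overline d_\mu = s$ a.e.

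For the lower local dimension the goal is $\underline d_\mu(x)=0$ a.e., and the mechanism is the infinite entropy: along a $\mu$-typical point the digits $i_k$ are unbounded and in fact a positive-density (or at least infinitely often) subsequence of them is enormous, so that the single next cylinder $I_{n+1}(x)$ inside $I_n(x)$ has length vastly smaller than $|I_n(x)|$ while still carrying a definite proportion — bounded below by Gibbs constants times $p_{i_{n+1}}$-type weight — of the mass of $I_n(x)$. Quantitatively, I would produce infinitely many scales $r_k\downarrow 0$ (taking $r_k$ slightly larger than $|I_{n_k+1}(x)|$ but still at most $|I_{n_k}(x)|$, along times $n_k$ where the digit $i_{n_k+1}$ is chosen huge) at which $\log\mu(B(x,r_k))/\log r_k$ is forced to be arbitrarily small, because $\log\mu(B(x,r_k))$ stays bounded away from $-\infty$ on the scale set by $\log r_k\to-\infty$. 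The existence of such huge digits infinitely often along typical orbits is a Borel–Cantelli / second-moment argument using that $\sum_i$ (Gibbs weight of digit $i$) fails to have a finite "$\sum p_i\log(1/\ell_i)$"-type sum precisely because $h_\mu=\infty$; this is where Assumption \ref{assumption} enters to make the estimates uniform.

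I expect the main obstacle to be the reverse inequality $\overline d_\mu(x)\ge s$, i.e. the geometric comparison between a Euclidean ball $B(x,r)$ and the symbolic cylinders it intersects when the branch lengths decay extremely fast: one must guarantee that $B(x,r)$ does not pick up a lot of mass from a sibling cylinder that is geometrically close but carries much more $\mu$-mass, and that the number of generation-$(n+1)$ children of $I_n(x)$ meeting $B(x,r)$ is controlled. The Gauss-like structure (comparability of each cylinder to its distance from the complement of its parent, and the uniform contraction giving the nested-intervals picture) is exactly what should make this work, together with the Gibbs bounds to convert the resulting finite sums of cylinder masses into something comparable to $\mu(I_n(x))$. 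The lower-dimension statement, by contrast, I expect to be more robust, needing only one well-chosen sequence of scales rather than uniform control over all scales.
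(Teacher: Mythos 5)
Your plan for $\underline{d}_\mu=0$ does not work as stated, for two separate reasons. First, the scales you propose (radius slightly larger than $|I_{n+1}(x)|$ at a time when the digit $a_{n+1}$ is huge) do not drive the ratio to $0$: the mass such a ball captures is comparable to $\mu(I_{n+1}(x))\approx \mu(I_n(x))\,p_{a_{n+1}}$, which is \emph{not} ``a definite proportion'' of $\mu(I_n(x))$ --- the proportion $p_{a_{n+1}}$ tends to $0$ as the digit blows up --- so the quotient tends to $\log p_{a_{n+1}}/\log r_{a_{n+1}}\to s$, and you only recover $\underline{d}_\mu\le s$. To get $0$ the paper takes balls containing the whole union $\bigcup_{m\ge 0} I(a_1,\dots,a_{n-1},a_n+m)$ and exploits that the \emph{tail} decay ratio $\hat{s}=\lim_n \log\sum_{m\ge n}p_m/\log\sum_{m\ge n}r_m$ equals $0$; this is where infinite entropy really enters ($p_n\ge C n^{-(1+\delta)}$ for every $\delta>0$, while $\sum_{m\ge n}r_m\asymp n^{-(\alpha-1)}$). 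Second, the ``huge digit'' event you need is far stronger than digits being enormous infinitely often: to make the tail terms dominate you need $-\log r_{a_n}\ge M\sum_{k<n}(-\log r_{a_k})$ infinitely often for every $M$, where the Birkhoff sum itself grows superlinearly. A conditional Borel--Cantelli gives probabilities of order $e^{-\delta M S_{n-1}}$, which are summable, so your ``Borel--Cantelli / second-moment'' route does not obviously produce these times. The paper gets them from infinite ergodic theory: Aaronson's dichotomy ($\limsup S_n/b_n=\infty$) combined with the Aaronson--Nakada trimmed-sum theorem ($S_n-\max\sim b_n$, valid since Gibbs--Markov maps are CF-mixing and the series $\sum(\log r_n)^2(p_n^2+2p_np_{n+1})$ converges under polynomial decay), so that infinitely often the maximal term dominates the whole preceding sum. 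This input is missing from your sketch and is not routine to replace.

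For $\overline{d}_\mu=s$ you have the easy and hard directions interchanged, and your ``easy direction'' uses the inequality the wrong way: with $r=|I_n(x)|$, the bound $\mu(B(x,r))\ge\mu(I_n(x))$ gives an \emph{upper} bound for $\log\mu(B(x,r))/\log r$ at those scales, hence no information of the form $\limsup\ge s$. The actual lower bound $\overline{d}_\mu\ge s$ requires an upper bound on $\mu(B(x,r))$, obtained in the paper by taking $r=\min\{|I_n|,|I_n^\ell|,|I_n^r|\}$ along times with $a_n\neq 1$ and using the comparability $p_m\asymp p_{m+1}$, $r_m\asymp r_{m+1}$ from Assumption \ref{assumption}. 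The genuinely delicate direction is $\overline{d}_\mu\le s$ for \emph{all} radii: by Proposition \ref{limsup} the window $|I_n(x)|<r\le |I_{n-1}(x)|$ is super-exponentially wide infinitely often, so single-cylinder comparisons are useless there, and your claim that the extra generation-$n$ cylinders meeting $B(x,r)$ ``contribute at most a comparable amount of mass'' is false in general --- they can carry vastly more mass than $I_n(x)$, which is precisely the mechanism forcing $\underline{d}_\mu=0$. What saves the upper bound is not comparability with one cylinder but a tail-sum comparison: the paper's Lemma \ref{ineqsums} bounds $\log\sum_{m=k}^{n+k}p_m$ against $\log\sum_{m=k-1}^{n+k+1}r_m$ by $\frac{1+\delta}{\alpha-\delta}+\eta\approx s$, again using polynomial decay of $\{r_n\}$ and the infinite-entropy lower bound on $p_n$, together with a two-case analysis according to whether the ball swallows the cylinder $I(a_1,\dots,a_{n-1},k_0)$. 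Nothing in your sketch supplies these estimates, so both halves of the statement remain unproved as written.
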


We can also compute the almost sure value of the symbolic dimension. The Gibbs assumption on the measure implies that a certain sequence of observables can be seen as a non-integrable stationary ergodic process and allows us to use some tools of infinite ergodic theory developed by Aaronson. In particular, the pointwise behavior of trimmed sums plays a fundamental role in our arguments. We also prove that the packing dimension of such measures is equal to the decay ratio, and conclude that such systems are not exact dimensional. We remark that the methods used in the context of finite entropy fail, as they rely on the fact that the measure and diameter of the iterates of the natural Markov partition decrease at an exponential rate given by $h_\mu$ and $\lambda_\mu$ respectively, enabling the use of coverings by balls of different scales. To tackle this problem, we make use of more refined coverings of balls, which are capable of detecting the asymptotic interaction between the Gibbs measure and the Lebesgue measure. 

The study of the Hausdorff dimension of sets for which their points have infinite Lyapunov exponent has already been considered: see for instance \cite{fan2010frequency} where the authors compute the Hausdorff dimension of sets with prescribed digits on their continued fraction expansion, or \cite{fishman2014diophantine} where the authors construct a measure invariant under the Gauss map which gives full measure to the Liouville numbers. Since the Liouville numbers are a zero dimensional set, such measure is also zero dimensional. Our result shows that this is the case for a large class of measures.

The dimension of Bernoulli measures for the Gauss map $G$ was studied by Kifer, Peres and Weiss in \cite{kifer2001dimension}, where they show that there is a universal constant $\varepsilon_0>10^{-7}$ so that
\begin{align*}
\dim_H (\mu_p \circ \pi^{-1})\leq 1-\varepsilon_0
\end{align*}
for every Bernoulli measure on the symbolic space coding the Gauss map, where $\pi$ is the coding map. This inequality holds even for the case where the entropy of the measure is infinite. They also show that for an infinite entropy Bernoulli measure $\mu$, the Hausdorff dimension satisfies $\dim_H\mu \leq 1/2$. Their method relies on estimating the dimension of the sets of points for which the frequency of a sequence of digits in their continued fraction expansion differs from the expected value by a certain threshold is uniformly (with respect to the sequence of digits) bounded from 1, and a bound on the dimension of points that lie in unusually short cylinders.
This situation has been recently studied by Jurga and Baker (see \cite{2018arXiv180600841J} and \cite{2018arXiv180207585B}) using different methods. Concretly, in \cite{2018arXiv180600841J}  the author uses ideas of the Hilbert-Birkhoff cone theory and extract information about the dynamics through the transfer operator. On the other hand, in \cite{2018arXiv180207585B}) the authors construct a Bernoulli measure $\mu_{q}$ such that $\sup_p \dim_H\mu_p = \dim_H \mu_q$, where the supremum is taken over all Bernoulli measures. This in conjunction with the Variational Principle (see \cite{walters2000introduction}) yield their result.

The paper is structured as follows. In section \ref{sec:not} we introduce the notation used throughout the paper as well as the main objects of study. We also state the results of the paper. In section \ref{sec:symb} we compute the symbolic dimension and characterize it in terms of the Markov partition. In section \ref{sec:infinite} we study the consequences of $\lambda_\mu=\infty$ at the level of the asymptotic rate of contraction of the cylinders. In sections \ref{sec:haus} and \ref{sec:pack} we prove the results for the Hausdorff and the Packing dimension respectively. We finish the article stating some questions of interest that could not be answered with the methods used in this paper. 

\newpage

\section{Notation and statement of main results} \label{sec:not}

\subsection{The class of maps}
We start introducing the EMR (Expanding-Markov-Renyi) maps of the interval. 
\begin{defi}
We say that a map $T\colon I\to I$ of the interval $I=[0,1]$ is an \emph{EMR map} if there is a countable collection of closed intervals $\{I(n)\}$ (with disjoint interiors $\interior{I(n)}$) such that:
\begin{enumerate}
    \item The map is $\mathcal{C}^2$ on $\bigcup_n \interior{I(n)}$,
    \item Some power of $T$ is uniformly expanding, i.e., there is a positive integer $r$ and a constant $\alpha>1$ such that $|(T^r)'(x)|\geq \alpha$ for all $x\in\bigcup_n \interior{I(n)}$,
    \item The map is Markov and can be coded by a full shift (see next subsection),
    \item The map satisfies Renyi's condition: there is a constant $E>0$ such that
\begin{align*}
\sup_{n\in\N}\sup_{x,y,z\in I_n} \dfrac{|T''(x)|}{|T'(y)||T'(z)|}\leq E,
\end{align*}
\end{enumerate}
\end{defi}

This class of maps was first introduced in \cite{pollicott1999multifractal} in the context the multifractal analysis of the Lyapunov exponent for the Gauss map. Renyi's condition provides good estimates for the Lebesgue measure of the cylinders associated to the Markov structure of the map (see next subsection). For simplicity, we will assume that the maps are orientation preserving (the orientation reversing case only differs in the relative position of the cylinders). The set of branches must accumulate at least at one point, and we assume that it accumulates at exactly one point: we also assume that the branches accumulate on the left endpoint of $I$ (the case when the branches accumulate in the right endpoint of $I$ is analogous). Re-indexing if necessary, we can assume that $I(n+1)< I(n)$ for all $n$. Let $r_n=|I(n)|$. 

\begin{defi}
We say that an EMR map $T$ is a \emph{Gauss-like map} if it satisfies the following conditions:
\begin{enumerate}
\item $r_n>0$ for every $n\in\N$,
\item $r_{n+1}\leq r_n$,
\item $\sum_n r_n= 1$,
\item $0<K\leq r_{n+1}/r_n \leq K' < \infty$ for some constants $K,K'$,
\item $\{r_n\}$ decays polynomially as $n$ goes to infinity (see definition \eqref{decay}).
\end{enumerate}
\end{defi}

We want to keep in mind piecewise linear functions as the main example, as for this class of maps, calculations are simplified. We will also keep in mind the example of the Gauss map.

\begin{center}
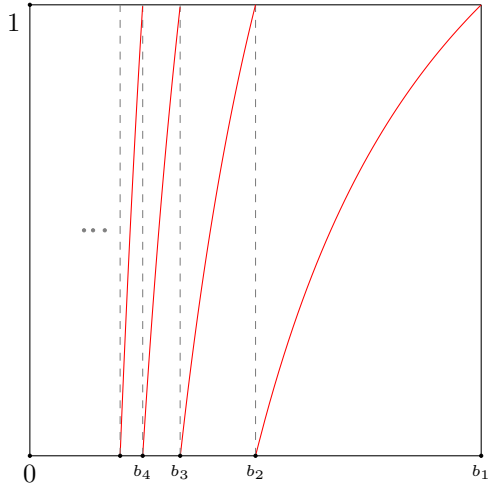
\begin{figure}[h!]
\begin{tikzpicture}[scale=6]
  \draw[-] (0,0) -- (1,0) ;
  \draw[-] (0,0) -- (0,1) ;
  \draw[-] (1,0) -- (1,1) ;
  \draw[-] (0,1) -- (1,1) ;
  \filldraw[black] (0,1) circle (0.1pt) node[below left]{$1$}; 
  \filldraw[black] (0,0) circle (0.1pt) node[below]{$0$}; 
  \filldraw[black] (1,0) circle (0.1pt) node[below]{ \tiny $b_1$}; 
\draw[scale=1,domain=0.5	:1,smooth,variable=\x,red]  plot ({\x},{-1/\x+2  });
\draw[gray,dashed] (0.5,0) -- (0.5,1) ;
    \filldraw[black] (0.5,0) circle (0.1pt) node[below]{\tiny $b_2$}; 
\draw[scale=1,domain=0.333:0.5,smooth,variable=\x,red]  plot ({\x},{-1/\x+3});
  \draw[gray,dashed] (0.333,0) -- (0.333,1) ;
\draw[scale=1,domain=0.25:0.333,smooth,variable=\x,red]  plot ({\x},{-1/\x+4});
  \filldraw[black] (0.333,0) circle (0.1pt) node[below]{\tiny $b_3$}; 
  \draw[gray,dashed] (0.25,0) -- (0.25,1) ;
    \filldraw[black] (0.25,0) circle (0.1pt) node[below]{\tiny $b_4$}; 
\draw[scale=1,domain=0.2:0.25,smooth,variable=\x,red]  plot ({\x},{-1/\x+5});
  \draw[gray,dashed] (0.2,0) -- (0.2,1) ;
  \filldraw[black] (0.2,0) circle (0.1pt) node[below]{}; 
\filldraw[gray] (0.166,0.5) circle (0.1pt) node[anchor=west]{};
\filldraw[gray] (0.14,0.5) circle (0.1pt) node[anchor=west]{};
\filldraw[gray] (0.12,0.5) circle (0.1pt) node[anchor=west]{};
\end{tikzpicture} 
\caption{Example of a Gauss-like map with the choice $I(n)=[\frac{1}{n+1},\frac{1}{n}]$.}
\end{figure}
\end{center}

\subsection{Markov structure and symbolic coding}

We describe now the Markov structure of the maps considered. Given a finite sequence of natural numbers $(a_1,\hdots,a_n)\in\N^n$, the \emph{n-th level cylinder} associated to $(a_1,\hdots,a_n)$ is the set $I(a_1,\hdots,a_n)=I_{a_1}\cap T^{-1}(I(a_2))\cap\hdots\cap T^{-(n-1)}(I(a_n))$. Let $\mathcal{O}=\bigcup_{n}\bigcup_k T^{-n}(\partial I(k))$, then given $x\in [0,1]\setminus \mathcal{O}$ and $n\in\N$, there exists a unique sequence $(a_1(x),a_2(x),\hdots)\in\N^\N$ such that $x\in I(a_1(x),\hdots,a_n(x))$ for every $n$. We denote this sequence  by by $(a_1,a_2,\hdots)$ when $x$ is clear from the context. We also denote $I_n(x)=I(a_1,\hdots,a_n)$ and we say $x$ is coded by the sequence $(a_n)$

Let $\Sigma=\N^\N$ and $\sigma\colon \Sigma\to\Sigma$ be the full shift over $\N$. Then the cylinders in the symbolic space are defined by
\begin{align*}
C(a_1, a_2, \dots ,a_n)=\left\{ (x_n) \in \Sigma \mid x_j=a_j \text{ for } j =1, \dots, n\}\right\}.
\end{align*}

We endow the space $\Sigma$ with the topology generated by the cylinders defined above. Then the map $\pi\colon \Sigma\to I\setminus\mathcal{O}$ given by $\pi((x_n))=\bigcap_n I(x_1,\hdots,x_n)$ is a continuous bijection.

 Given $x\in I\setminus\mathcal{O}$ with coding sequence $(a_n)$ and $n\geq 1$, denote by $I_n^l(x)=I(a_1,\hdots,\newline a_{n-1},≈ a_n+1)$ (resp $I_n^r(x)=I(a_1,\hdots,a_{n-1},a_n-1)$ if $a_n\geq 2$) the level $n$ cylinder on the left (resp right) of $I_n(x)$. Also, denote by $\hat{I}_n(x)=I_n(x)\ \cup \ I_n^r(x)\ \cup \  I_n^l(x)$. If there is no risk of confusion, we omit the dependence on $x$.

Renyi's condition introduced in the previous subsection implies that the length of each cylinder is comparable to the derivative of the iterates of the map at any point of the cylinder. More precisely,
\begin{align*}
0<D^{-1} \leq \left|(T^n)'(x)\right| \cdot |I(a_1,\hdots,a_n)| \leq D
\end{align*}
for every finite sequence $(a_1,\hdots,a_n)\in\N^n$ and $x\in I(a_1,\hdots,a_n)$.

\subsection{The class of measures}\label{conditions}

We start by giving the usual definition of Gibbs measures:

\begin{defi}
Let $\mu$ be an invariant measure with respect to $T$. Then we say that $\mu$ is a \emph{Gibbs measure} associated to the potential $\log \varphi\colon\Sigma \to 
\R$, that is, there exist constants $A,B>0$ so that
\begin{align*}
    A \leq \dfrac{\mu(C(a_1,\hdots,a_n))}{\exp(-nP(\log\varphi)+S_n(\log\varphi)(x))}\leq B,
\end{align*}
where $x$ is any point in $C(a_1,\hdots,a_n)$, $(a_1,\hdots,a_n,\hdots)$ is any sequence in $\Sigma$, $S_n f(x)$ is the Birkhoff sum of $f$ at the point $x$, and $P(\log\varphi)$ is a constant (depending on the potential) called the \emph{topological pressure} of $\log\varphi$.
\end{defi}

Throughout this work we will assume that $P(\log\varphi) = 0$, otherwise we can take the zero pressure potential $\log\varphi-P(\log\varphi) $. It is important to note that it is not trivial that this will not affect our computations, and we will show later how we can overcome that difficulty. The sequence $p_n = \mu(I(n))$ will be of particular relevance for our computations.

We can project this measure to $I$ by setting $\hat{\mu}=\pi^{-1}\circ \mu$. We assume these measures are invariant and ergodic with respect to $T$. We will denote by $\mu$ both the measure in the symbolic space and the projected measure.

Our main assumption on the class of measures is that they have infinite entropy. This can be expressed by saying that the potential $-\log\varphi$ is not integrable with respect to $\mu$. In fact, by the Gibbs property, the Shannon-McMillan-Breiman entropy can be written as
\begin{align*}
    h_\mu = -\lim_{n \to \infty} \dfrac{1}{n}\log\mu \left(C(a_1,\hdots,a_n) \right) = \lim_{n\to\infty} \dfrac{1}{n}S_n(-\log\varphi)(x) = 
    \infty
\end{align*}
for $\mu$ almost every $x\in\Sigma$ if the integral of $-\log\varphi$ is infinite. The last equality is a consequence of Lemma \ref{infinitelyap}.

We define the \emph{n-th variation} of the potential $\log\varphi$ by
\begin{align*}
    \var_n (\log\varphi) = \sup \{ |\log\varphi(x)-\log\varphi(y)| \mid x,y\in I(a_1,\hdots,a_n), (a_1,\hdots,a_n)\in \N^n \}.
\end{align*}

\begin{defi}
Let $x_n$ be the unique fixed point of $T$ in $I(n)$. We define then the \emph{decay ratio} by
\begin{align*}
s=\lim_{n\to\infty}\dfrac{\log \varphi(x_n)}{\log r_n} = \lim_{n\to\infty}\dfrac{\log p_n}{\log r_n}.
\end{align*}
The \emph{tail decay ratio} is defined by
\begin{align*}
\hat{s}=\lim_{n\to\infty} \dfrac{\log\sum_{m\geq n} \varphi(x_m)}{\log\sum_{m\geq n} r_m} = \lim_{n\to\infty} \dfrac{\log\sum_{m\geq n}p_m}{\log\sum_{m\geq n} r_m}.
\end{align*}
\end{defi}

Both definitions for $s$ and $\hat{s}$ agree since $\mu$ is a Gibbs measure. Note also that the definitions above are independent of the choice of the point $x_n$ representing each cylinder if $\var_1(\varphi)<\infty$. By Cers\`aro-Stolz theorem we can write the decay ratio as
\begin{align*}
    s =\lim_{n\to\infty} \dfrac{\sum_{k=1}^n p_n\log p_n}{\sum_{k=1}^n p_n\log r_n}.
\end{align*}

\begin{assu}\label{assumption} Assume that $\var_1(\log\varphi) < \infty$.  
For the sequence sequence $q=\{ q_n\}_{n\in\N} = \{ \varphi(x_n)\}$ we assume that for every $n\in\N$, we have
\begin{align*}
0<K\leq q_{n+1}/q_n \leq K' < \infty 
\end{align*}
for some constants $K,K'$.
\end{assu}

The second condition prevents the existence of large jumps for the potential along sufficiently sparse subsequences of $\{x_n\}$. By the Gibbs property, the properties hold if we replace $q_n$ by $p_n$.

\subsection{Entropy and Lyapunov exponent} 
Since our measures are Gibbs and the potential has finite first variation, we can write the entropy of the system simply as
\begin{align*}
h_\mu=-\sum_{n=1}^\infty q_n\log q_n = -\sum_{n=1}^\infty p_n\log p_n
\end{align*} 
We define the Lyapunov exponent as
\begin{align*}
\lambda_\mu=\int_0^1 \log|T'(x)|\mathrm{d}\mu(x).
\end{align*}
 By the bounded distortion property, we can write the Lyapunov exponent as
\begin{align*}
\lambda_\mu=-\sum_{n=1}^\infty q_n\log r_n+L.
\end{align*}
where $L$ is a distortion constant (independent of $\mu$). Thus, $\lambda_\mu$ is infinite if and only if the series above is divergent. Throughout this work, we assume that both numbers $h_\mu$ and $\lambda_\mu$ are infinite, and hence we can think of $\lambda_\mu$ as defined by the series above.

\subsection{Hausdorff and packing dimension} \label{ssec:haus} In this section we introduce the dimension theory elements we will study throughout this work. Recall the diameter of a set $U\subset \R$ is given by
\begin{align*}
|U| = \sup \{ |x-y| \mid x,y\in U \}.
\end{align*}
For a cover $\mathcal{U}$ of a set $X\subset\R$, its diameter is given by
\begin{align*}
\diam \mathcal{U} = \sup\{| U| \mid U\in\mathcal{U} \}.
\end{align*}

\begin{defi}
Given $X\subset \R$ and $\alpha\in\R$, the $\alpha-$\emph{dimensional Hausdorff measure} \index{Hausdorff measure} of $X$ is given by
\begin{align*}
m(X,\alpha)=\lim_{\delta\to 0}\inf_{\mathcal{U}}\sum_{U\in\mathcal{U}}|U|^\alpha ,
\end{align*}
where the infimum is taken over finite or countable covers $\mathcal{U}$ of $X$ with $\diam\mathcal{U}\leq \delta$.
\end{defi}
It is possible to prove that there exists a number $s\in[0,\infty]$ such that $m(X,\alpha)=\infty$ for $t< s$ and $m(X,\alpha)=0$ for $t> s$, since $m(X,\alpha)$ is decreasing in $\alpha$ for a fixed set $X$.
\begin{defi}
The unique number
\begin{align*}
\dim_H X =\inf\{\alpha\in[0,\infty] \mid m(X,\alpha)=0 \}
\end{align*}
is called the \emph{Hausdorff dimension} \index{Hausdorff dimension} of $X$.
\end{defi}

We extend the notion of Hausdorff dimension to finite Borel measures on $\R$:
\begin{defi}
Let $\mu$ be a finite Borel measure on $\R$. The \emph{Hausdorff dimension} of $\mu$ is defined by
\begin{align*}
\dim_H \mu=\inf\{\dim_H(Z)\mid \mu(\R \setminus Z)=0 \}.
\end{align*}
\end{defi}

We define now the analogue notion of Packing dimension

\begin{defi}
We say that a collection of balls $\{U_n \}_n\subset \R$ is a $\delta-$packing of the set $E\subset\R$ if the diameter of the balls is less than or equal to $\delta$, they are pairwise disjoint and their centres belong to $E$. For $\alpha\in\R$, the $\alpha-$dimensional pre-packing measure of $E$ is given by
\begin{align*}
P(E,\alpha)=\lim_{\delta\to 0}\sup\Big\{ \sum_n \diam (U_n)^\alpha \Big\}
\end{align*}
where the supremum is taken over all $\delta-$packings of $E$. The $\alpha-$dimensional packing measure of $E$ is defined by
\begin{align*}
p(E,\alpha)=\inf\Big\{ \sum_i P(E_i,\alpha) \Big\}
\end{align*}
where the infimum is taken over all covers $\{E_i \}$ of $E$. Finally, we define the packing dimension of $E$ by
\begin{align*}
\dim_P(E)=\sup\{s\mid p(E,\alpha)=\infty \}=\inf\{s\mid p(E,\alpha)=0 \}.
\end{align*}
\end{defi}

We extend the notion of packing dimension to finite Borel measures on $\R$.
\begin{defi}
Let $\mu$ be a finite Borel measure on $\R$. The \emph{Packing dimension} of $\mu$ is defined by
\begin{align*}
\dim_P \mu=\inf\{\dim_P(Z)\mid \mu(\R\setminus Z)=0 \}.
\end{align*}
\end{defi}

It is important to remark that the definitions of dimension for measures is not standard. For instance, a different definition often used is given by
\begin{align*}
    \underline\dim_H \mu&=\inf\{\dim_H(Z)\mid  \mu(Z)>0 \},\\
    \underline\dim_P \mu&=\inf\{\dim_P(Z)\mid \mu(Z)>0 \}.
\end{align*}
We refer to these as lower Hausdorff (packing) dimensions.

Bounding the Hausdorff dimension from above or the Packing dimension from below usually involves the use of a single suitable cover of the space, while for bounds from below and above respectively, we have to deal with \emph{every} cover of the space. There are several tools to help with this problem, and we will make use of the so called (local) Mass Distribution Principles. For this, we introduce the notion of local dimension.

\begin{defi}
The \emph{lower} and \emph{upper pointwise dimensions} \index{Pointwise dimension} of the measure $\mu$ at a point $x\in X$ is given by
\begin{align*}
\underline{d}_\mu(x)=\liminf_{r\to 0}\dfrac{\log \mu(B(x,r))}{\log r} \ , \  \overline{d}_\mu(x)=\limsup_{r\to 0}\dfrac{\log \mu(B(x,r))}{\log r} .
\end{align*}
When both limits coincide, we call the common value the \textit{pointwise dimension} of $\mu$ at $x$ and denote it by $d_\mu(x)$ and say that $\mu$ is \emph{exact dimensional} if $\underline{d}=\overline{d}$ almost everywhere.
\end{defi}

If $d_\mu(x)=d$, then $\mu(B(x,r))\sim r^d$ for small values of $r$. We state now the local version of the Mass Distribution Principle.

\begin{prop} 
Let $X\subset \R$ and $\alpha\in(0,\infty]$, then
\begin{enumerate}
\item If $\underline{d}_\mu(x)\geq \alpha$ for $\mu-$almost every $x\in X$, then $\dim_H \mu \geq \alpha $;
\item If $\underline{d}_\mu(x)\leq \alpha$ for every $x\in X$, then $\dim_H X\leq \alpha$,
\item If $\overline{d}_\mu(x)\geq \alpha$ for $\mu-$almost every $x\in X$, then $\dim_P \mu \geq \alpha $;
\item If $\overline{d}_\mu(x)\leq \alpha$ for every $x\in X$, then $\dim_P X\leq \alpha$,
\item We have
\begin{align*}
\dim_H\mu &=\esssup\{ \underline{d}_\mu(x)\mid x\in X \},\\
\dim_P\mu &=\esssup\{ \overline{d}_\mu(x)\mid x\in X \},\\
\end{align*}
\end{enumerate}
\end{prop}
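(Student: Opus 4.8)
The plan is to derive all five assertions from two pairs of pointwise estimates --- a local mass distribution principle and its converse, once for Hausdorff and once for packing dimension --- and to isolate part $(3)$, the lower bound for $\dim_P\mu$ in terms of $\overline d_\mu$, as the one step requiring a genuinely new idea; the rest is classical. Concretely, I would first prove the following four facts, valid for every Borel $A\subseteq\R$ and every $s>0$: \textbf{(A1)} if $\mu(A)>0$ and $\underline d_\mu(x)\geq s$ for all $x\in A$ then $\dim_H A\geq s$; \textbf{(A2)} if $\underline d_\mu(x)\leq s$ for all $x\in A$ then $\dim_H A\leq s$; \textbf{(B1)} if $\mu(A)>0$ and $\overline d_\mu(x)\geq s$ for all $x\in A$ then $\dim_P A\geq s$; \textbf{(B2)} if $\overline d_\mu(x)\leq s$ for all $x\in A$ then $\dim_P A\leq s$.

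Granting these, the Proposition follows by routine manipulation. Part $(2)$ is \textbf{(A2)} with $A=X$, and $(4)$ is \textbf{(B2)} with $A=X$. For $(1)$: given $Z$ with $\mu(\R\setminus Z)=0$, the set $A:=Z\cap\{x\in X:\underline d_\mu(x)\geq\alpha\}$ has $\mu(A)=\mu(X)>0$ (both $Z$ and $\{x\in X:\underline d_\mu(x)\geq\alpha\}$ being co-null in $X$) and $\underline d_\mu\geq\alpha$ on $A$, so $\dim_H Z\geq\dim_H A\geq\alpha$ by \textbf{(A1)}; taking the infimum over admissible $Z$ gives $\dim_H\mu\geq\alpha$, and $(3)$ follows from \textbf{(B1)} in the same way. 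For $(5)$, set $e:=\esssup\{\underline d_\mu(x):x\in X\}$: if $t>e$ then $\{x\in X:\underline d_\mu(x)\leq t\}$ is co-null and \textbf{(A2)} gives $\dim_H\mu\leq t$; if $\beta<e$ then $\mu(\{\underline d_\mu>\beta\})>0$, so for every co-null $Z$ the set $Z\cap\{\underline d_\mu>\beta\}$ has positive measure with $\underline d_\mu\geq\beta$ on it, whence $\dim_H Z\geq\beta$ by \textbf{(A1)}; thus $\dim_H\mu=e$. The identity for $\dim_P\mu$ is proved identically, with $\overline d_\mu$, \textbf{(B1)}, \textbf{(B2)} replacing $\underline d_\mu$, \textbf{(A1)}, \textbf{(A2)}.

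Facts \textbf{(A1)}, \textbf{(A2)} and \textbf{(B2)} are classical. For \textbf{(A1)} and \textbf{(B2)} one stratifies: fixing $s'<s$ (resp.\ $s''>s$), for each $x\in A$ there is $\delta_x>0$ with $\mu(B(x,r))\leq r^{s'}$ (resp.\ $\mu(B(x,r))\geq r^{s''}$) for $0<r<\delta_x$, and $A_k:=\{x\in A:\delta_x>1/k\}$ exhausts $A$. For \textbf{(A1)}, pick $k$ with $\mu(A_k)>0$; in a cover $\mathcal U$ of $A_k$ of diameter $<1/(2k)$ every $U$ meeting $A_k$ lies in a ball $B(x_U,2|U|)$ with $x_U\in A_k$, so $\mu(U)\leq(2|U|)^{s'}$ and $\sum_U|U|^{s'}\geq 2^{-s'}\mu(A_k)$; hence $m(A_k,s')>0$ and $\dim_H A\geq\dim_H A_k\geq s'$, and letting $s'\uparrow s$ finishes. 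For \textbf{(B2)}, any $\delta$-packing $\{B(x_i,r_i)\}$ of $A_k$ with $\delta<2/k$ is disjoint with $\mu(B(x_i,r_i))\geq r_i^{s''}$, so $\sum_i\diam(B(x_i,r_i))^{s''}\leq 2^{s''}\sum_i\mu(B(x_i,r_i))\leq 2^{s''}\mu(\R)<\infty$; thus $p(A_k,s'')\leq P(A_k,s'')<\infty$, $\dim_P A_k\leq s''$, and countable stability of $\dim_P$ gives $\dim_P A\leq s$. Fact \textbf{(A2)} is dual: for $s''>s$ the balls $B(x,r)$ with $x\in A$, $r$ small and $\mu(B(x,r))>r^{s''}$ form a Vitali cover of $A$; the $5r$-covering lemma yields a disjoint subfamily $\{B(x_i,r_i)\}$ with $A\subseteq\bigcup_i B(x_i,5r_i)$, and disjointness plus finiteness of $\mu$ give $\sum_i\diam(B(x_i,5r_i))^{s''}\leq 10^{s''}\mu(\R)<\infty$, so $m(A,s'')<\infty$ and $\dim_H A\leq s$.

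The main obstacle is \textbf{(B1)}: $\overline d_\mu(x)\geq s$ only provides the bound $\mu(B(x,r))\leq r^{s'}$ along a sequence of radii depending on $x$, whereas a packing-dimension lower bound must exhibit many disjoint balls at a \emph{common} scale. Here I would use the standard characterization of packing dimension by the upper box-counting dimension $\overline{\dim}_B$, namely $\dim_P A=\inf\{\sup_j\overline{\dim}_B A_j: A\subseteq\bigcup_{j}A_j\}$ (see e.g.\ Falconer's book), and reduce \textbf{(B1)} to: any $A'$ with $\mu(A')>0$ and $\overline d_\mu\geq s$ on $A'$ has $\overline{\dim}_B A'\geq s$ --- because in any countable cover $A\subseteq\bigcup_j A_j$ some piece satisfies $\mu(A\cap A_{j_0})>0$, and $A\cap A_{j_0}\subseteq A_{j_0}$ still carries $\overline d_\mu\geq s$. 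To prove the reduced claim, fix $s'<s$, $\eps\in(0,s')$ and dyadic scales $\rho_k=2^{-k}$, and let $A'_k:=\{x\in A':\mu(B(x,r))<r^{s'}\ \text{for some}\ r\in(\rho_{k+1},\rho_k]\}$. For $x\in A'$ the radii witnessing $\overline d_\mu(x)\geq s>s'$ accumulate at $0$, so $x\in A'_k$ for infinitely many $k$; hence $A'\subseteq\limsup_k A'_k$, and since $\mu(\limsup_k A'_k)\geq\mu(A')>0$ the first Borel--Cantelli lemma forces $\sum_k\mu(A'_k)=\infty$, so $\mu(A'_k)\geq\rho_k^{\eps}$ for infinitely many $k$ (otherwise the series converges). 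For such $k$, every $x\in A'_k$ satisfies $\mu(B(x,\rho_k/2))\leq\mu(B(x,r))<r^{s'}\leq\rho_k^{s'}$; hence if $A'_k$ is covered by $N$ balls of radius $\rho_k/4$, each one meeting $A'_k$ lies inside some $B(x,\rho_k/2)$ with $x\in A'_k$, so $\mu(A'_k)\leq N\rho_k^{s'}$, i.e.\ $N\geq\mu(A'_k)\rho_k^{-s'}\geq\rho_k^{\eps-s'}$. Thus the covering number of $A'$ at scale $\rho_k/4$ is $\geq\rho_k^{\eps-s'}$ for infinitely many $k$, whence $\overline{\dim}_B A'\geq s'-\eps$; letting $\eps\downarrow0$ and $s'\uparrow s$ gives $\overline{\dim}_B A'\geq s$, hence $\dim_P A\geq s$, which completes \textbf{(B1)} and the Proposition.
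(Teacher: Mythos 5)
Your proposal is correct, but it is doing much more work than the paper does: the paper offers no argument at all for this proposition, simply citing Proposition 2.3 of Falconer's \emph{Techniques in Fractal Geometry}, so your blind proof in effect reconstructs the cited material. Your facts (A1)--(B2) are exactly the content of that reference, and your proofs follow the standard lines: stratification by the threshold radius plus a mass-distribution count for (A1), the $5r$-covering (Vitali) argument for (A2), disjointness of packings against finiteness of $\mu$ for (B2), and --- for the genuinely nontrivial step (B1) --- the characterization of $\dim_P$ as modified upper box dimension together with a Borel--Cantelli count at dyadic scales, which is precisely the textbook route; your derivation of items (1)--(5) from these four facts is routine and sound. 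What your write-up buys is self-containedness and transparency about where each hypothesis enters (in particular that only (B1) needs an idea beyond direct covering estimates); what the citation buys is brevity. Two small points you leave implicit and should state if this were to be included: the stratified sets $A_k$, $A'_k$ and the pieces $A\cap A_{j}$ need not be Borel, so the argument should be phrased with the outer measure $\mu^*$ (monotonicity, countable subadditivity and the first Borel--Cantelli lemma all survive this, so nothing breaks), and the box-dimension characterization of $\dim_P$ is for covers by bounded sets, so one should refine an arbitrary cover by intersecting with unit intervals; finally, parts (1), (3) and (5) tacitly assume $\mu(X)>0$ (in the paper $X$ is the full space, so this is harmless).
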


\begin{proof}
This follows from Proposition 2.3 of \cite{falconer1997techniques}.
\end{proof}

In particular, if $\underline{d}_\mu(x)$ is constant almost everywhere, then $\dim_H \mu$ is equal to that constant value. Analogously, if $\overline{d}_\mu(x)$ is constant almost everywhere, then $\dim_P\mu$ is equal to that constant value.

A notion of dimension which is more adapted to the underlying structure of our dynamical system is the symbolic dimension, which we proceed to define.

\begin{defi}
Given $x\in I$, we define the \emph{lower symbolic dimension} of $\mu$ at $x$ by
\begin{align*}
\underline{\delta}(x) = \liminf_{n\to\infty} \dfrac{\log \mu(I_n(x))}{\log |I_n(x)|},
\end{align*}
and the \emph{upper symbolic dimension} of $\mu$ at $x$ by
\begin{align*}
\overline{\delta}(x) = \limsup_{n\to\infty} \dfrac{\log \mu(I_n(x))}{\log |I_n(x)|},
\end{align*}
If $\overline{\delta}(x)=\underline{\delta}(x)$, then we define the \emph{symbolic dimension} of $\mu$ at $x$ as the common value,  denote it by $\delta(x)$, and we say that $\mu$ is \emph{symbolic exact dimensional} if $\underline{\delta} = \overline{\delta}$.
\end{defi}

\subsection{Main results} The estimates we prove depend on asymptotic relations between the measure and the length of cylinders defining the system. The main results are then:

\begin{teo}
Let $T$ be an EMR map, and $\mu$ be an infinite entropy Gibbs measure satisfying assumption \ref{assumption}. If the decay ratio exists and it is equal to $s$, then
\begin{align*}
    \underline{\delta}(x) = \overline{\delta}(x) = s \ \mu \text{ a.e.}
\end{align*}
\end{teo}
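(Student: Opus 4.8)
The plan is to show that the ratio $\log\mu(I_n(x))/\log|I_n(x)|$ converges to $s$ almost surely by exploiting the Gibbs property to replace both numerator and denominator with Birkhoff sums of explicit observables, and then applying an ergodic-theoretic argument adapted to the infinite-mean setting. First I would use the Gibbs bounds and bounded distortion to write, up to additive constants that are uniformly bounded in $n$,
\begin{align*}
\log\mu(I_n(x)) &= S_n(\log\varphi)(x) + O(1),\\
\log|I_n(x)| &= -\log|(T^n)'(x)| + O(1) = S_n(\log\psi)(x) + O(1),
\end{align*}
where $\psi$ encodes $-\log|T'|$ (this is the content of Renyi's condition and the bounded distortion estimate $D^{-1}\le |(T^n)'(x)|\cdot|I_n(x)|\le D$). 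So the symbolic dimension at $x$ is, up to lower-order corrections,
\begin{align*}
\frac{\log\mu(I_n(x))}{\log|I_n(x)|} = \frac{S_n(\log\varphi)(x)+O(1)}{S_n(\log\psi)(x)+O(1)}.
\end{align*}

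Next I would reduce to comparing $S_n(\log\varphi)$ and $S_n(\log\psi)$ digit-by-digit. Since $\operatorname{var}_1(\log\varphi)<\infty$, the value $\log\varphi$ on the cylinder $I(a_1,\dots,a_n)$ differs from $\log q_{a_1}$ by a bounded amount, so $S_n(\log\varphi)(x) = \sum_{k=0}^{n-1}\log q_{a_{k+1}} + O(n)$ — but the $O(n)$ term is negligible only because $h_\mu=\infty$ forces $S_n(\log\varphi)(x)/n\to-\infty$; this is exactly where Lemma \ref{infinitelyap} is invoked. Similarly $S_n(\log\psi)(x) = \sum_{k=0}^{n-1}\log r_{a_{k+1}} + O(n)$, and $\lambda_\mu=\infty$ makes the $O(n)$ term negligible against the divergent main term. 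Thus it suffices to show
\begin{align*}
\lim_{n\to\infty}\frac{\sum_{k=1}^{n}\log q_{a_k(x)}}{\sum_{k=1}^{n}\log r_{a_k(x)}} = s \quad \mu\text{-a.e.}
\end{align*}
Now the key structural input is the decay ratio hypothesis: $\log q_n/\log r_n\to s$, together with Assumption \ref{assumption}, which (via $K\le q_{n+1}/q_n\le K'$ and the same for $r_n$) controls how the observables vary along the orbit and prevents pathological jumps. The natural strategy is a Cesàro–Stolz / squeezing argument at the level of the random sums: for any $\eps>0$, for all $n$ large, $\log q_n$ lies between $(s-\eps)\log r_n$ and $(s+\eps)\log r_n$ (both negative quantities), so on the set of digits exceeding a threshold $N$ the contributions are pinched; the finitely many small digits $a_k\le N$ contribute only a term that is $O(\text{number of small digits among }a_1,\dots,a_n)$, which by the ergodic theorem applied to the \emph{bounded} indicator $\mathbbm{1}[a_1\le N]$ grows linearly in $n$, hence is again negligible against the super-linearly divergent denominator $\sum_{k=1}^n\log r_{a_k(x)}$ (whose growth beats $n$ precisely because $\lambda_\mu=\infty$, i.e. $-\sum_n q_n\log r_n=\infty$; here one needs that $\frac{1}{n}\sum_{k=1}^n(-\log r_{a_k(x)})\to\infty$ a.e., which follows from the ergodic theorem for the monotone observable $-\log r_{a_1(\cdot)}$ with infinite mean, or a truncation argument).

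The step I expect to be the main obstacle is making the last claim rigorous: both $\sum_{k=1}^n\log q_{a_k}$ and $\sum_{k=1}^n\log r_{a_k}$ are Birkhoff sums of non-integrable observables, so the ordinary Birkhoff ergodic theorem does not apply, and one cannot simply normalize by $n$. The resolution is that we are taking a \emph{ratio} of two such sums whose summands are comparable term-by-term (each ratio $\log q_{a_k}/\log r_{a_k}\to s$ as the digit grows), so the divergence cancels — but one must handle the digits that stay bounded, where the term-by-term ratio need not be close to $s$. The argument is: split $\sum_{k=1}^n\log r_{a_k} = \Sigma_n^{\le N} + \Sigma_n^{>N}$ and similarly for $q$; on the large-digit part the ratio is within $\eps$ by the decay-ratio hypothesis; $\Sigma_n^{\le N}$ is $O(n)$ a.e. by Birkhoff (bounded observable $-\log r_{a_1}\mathbbm 1[a_1\le N]$), while the full denominator $|\sum_{k=1}^n\log r_{a_k}|$ grows faster than linearly a.e. (infinite Lyapunov exponent via a truncation/monotone-convergence version of the ergodic theorem). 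Hence $\Sigma_n^{\le N}/\sum_{k=1}^n\log r_{a_k}\to 0$, and letting $\eps\to 0$ (equivalently $N\to\infty$) concludes. A minor additional point to verify is that the $O(1)$ and $O(n)$ error terms flagged above are genuinely dominated by the denominator, which again is exactly the reason the hypotheses $h_\mu=\lambda_\mu=\infty$ are imposed; these estimates are routine once the super-linear growth of the denominator is established.
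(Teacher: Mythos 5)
Your proposal is correct and follows essentially the same route as the paper: reduce $\log\mu(I_n(x))$ and $\log|I_n(x)|$ to the digit sums $\sum_k \log q_{a_k}$ and $\sum_k \log r_{a_k}$ via the Gibbs and bounded-distortion estimates (Lemma \ref{metricprop}), absorb the $O(n)$ error terms using the superlinear divergence of both sums supplied by Lemma \ref{infinitelyap}, and then squeeze the ratio by splitting the digits at a threshold, with the small-digit contribution growing only linearly (Birkhoff applied to bounded truncations) while the large-digit part of the denominator grows superlinearly because $\lambda_\mu=\infty$. The paper organizes this same split through the visit-count functions $f_{n,k}$, but the substance of the argument is identical to yours.
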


If we assume that the decay of $\{r_n\}$ is polynomial and the measure satisfies the regularity conditions given by assumption \ref{assumption}, we can compute the local dimensions:

\begin{teo}\label{mainteo}
Let $T$ be a Gauss-like map, and $\mu$ be an infinite entropy Gibbs measure satisfying assumption \ref{assumption}. If the decay ratio exists and it is equal to $s$, then 
\begin{enumerate}
    \item $\overline{d}(x) = s \ \mu \text{ a.e.}$,
    \item $\underline{d}(x) = 0 \ \mu \text{ a.e.}$,
\end{enumerate}
Consequently, $0=\dim_H \mu < s = \dim_P \mu$.
\end{teo}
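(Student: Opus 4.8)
The plan is to exploit the fact that, by Renyi's condition and the Gibbs property, the iterates of the Markov partition already control the measure and the Lebesgue length of cylinders, and then to compare balls $B(x,r)$ with the cylinders $I_n(x)$ and their neighbors $\hat I_n(x)$. The upper bound $\overline d(x)\le s$ and the claim $\overline d(x)\ge s$ should follow from the symbolic statement (the first displayed theorem, $\underline\delta=\overline\delta=s$ a.e.) together with the Gauss-like hypothesis on $\{r_n\}$: since $r_{n+1}/r_n$ and $q_{n+1}/q_n$ are bounded away from $0$ and $\infty$, consecutive cylinders $I_n^l(x),I_n(x),I_n^r(x)$ have comparable length and comparable measure, so for a radius $r$ with $|I_{n+1}(x)|\le r\le |I_n(x)|$ one has $B(x,r)\subset \hat I_n(x)$ on the one side and $B(x,r)\supset$ a definite fraction of $I_{n+1}(x)$ on the other. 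Passing to $\liminf$ and $\limsup$ of $\log\mu(B(x,r))/\log r$ along these scales, and using that the ``jump'' in $\log|I_n(x)|$ from level $n$ to $n+1$ is $\log r_{a_{n+1}}$ which is the source of the oscillation, I would show $\overline d(x)$ coincides with the $\limsup$ version of the symbolic dimension, hence equals $s$ a.e.

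For $\underline d(x)=0$ the point is that along the subsequence of times $n$ where the digit $a_{n+1}(x)$ is enormous, the cylinder $I_n(x)$ contains within it a huge block of sub-cylinders $I(a_1,\dots,a_n,j)$, and the ball $B(x,r)$ of radius just below $|I_n(x)|$ captures essentially all of $\mu(I_n(x))$ while $r$ is only polynomially small in $\mu(I_n(x))^{1/s}$; more precisely, because the entropy and the Lyapunov exponent are infinite, Lemma \ref{infinitelyap} (and the trimmed-sum / infinite-ergodic-theory input advertised in the introduction) guarantees that $-\log\mu(I_n(x))$ and $-\log|I_n(x)|$ both grow super-linearly, but that along a set of $n$ of full frequency-zero-complement the ratio $\log\mu(\hat I_n(x))/\log r$ for suitable $r\in(|I_{n+1}(x)|,|I_n(x)|)$ tends to $0$ — one uses that a single digit contributes $-\log q_{a_{n+1}}$ to the measure exponent but the surrounding Lebesgue scale only drops by $-\log r_{a_{n+1}}$, while the ball at the intermediate scale sees measure bounded below by $c\,\mu(I_n(x))$. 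I would make this quantitative by choosing $r_n = |I_n(x)|\cdot t_n$ with $t_n\to 0$ slowly, so that $\mu(B(x,r_n))\ge \mu(I_n(x))$ up to distortion, hence $\log\mu(B(x,r_n))/\log r_n \le (\log\mu(I_n(x)))/(\log|I_n(x)| + \log t_n)$, and then invoke the almost-sure super-linear growth to kill the ratio along a suitable subsequence.

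Once both local dimensions are identified, the final sentence $0=\dim_H\mu<s=\dim_P\mu$ is immediate from the local Mass Distribution Principle (the Proposition following the definition of pointwise dimension): part (1) with $\alpha$ arbitrarily small forces $\dim_H\mu\le \dim_H$ of a full-measure set on which $\underline d\equiv 0$, hence $\dim_H\mu=0$ by part (5); parts (3) and (5) with $\overline d\equiv s$ give $\dim_P\mu=s$; and $s>0$ since the decay ratio of a Gauss-like map with $\sum r_n=1$ and polynomially decaying $r_n$ is strictly positive (the potential cannot decay faster than every polynomial while $\mu$ remains a probability measure with infinite entropy). I expect the main obstacle to be the $\underline d(x)=0$ statement: one must rule out that the oscillation of $\mu(B(x,r))$ as $r$ ranges over $(|I_{n+1}(x)|,|I_n(x)|)$ is always ``tame,'' i.e. one needs the genuinely infinite-ergodic-theoretic input — the trimmed-sum asymptotics of the non-integrable observable $-\log q_{a_n}$ — to produce, almost surely, infinitely many scales at which the ball is anomalously fat relative to its radius; reconciling the symbolic estimates (which live on cylinders) with the metric ball $B(x,r)$ across these bad scales, keeping track of the boundedness constants $K,K'$ and the distortion constant $D$, is where the care is needed.
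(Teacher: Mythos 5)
There are two genuine gaps, and they sit exactly at the two points where the paper has to do real work beyond the symbolic statement.

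First, your route to $\overline{d}(x)\le s$ does not survive the infinite Lyapunov exponent. Sandwiching a radius $r$ with $|I_{n+1}(x)|\le r\le |I_n(x)|$ between consecutive cylinders only gives
\begin{align*}
\dfrac{\log \mu(B(x,r))}{\log r}\ \le\ \dfrac{\log \mu(I_{n+1}(x))}{\log |I_{n}(x)|}\ \approx\ s\,\dfrac{\log |I_{n+1}(x)|}{\log |I_{n}(x)|},
\end{align*}
and by Proposition \ref{limsup} the ratio $\log|I_{n+1}(x)|/\log|I_n(x)|$ has $\limsup$ equal to $\infty$ almost everywhere; so this bound is trivial along exactly the scales that matter. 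This is the failure the paper flags after inequality \eqref{ineq1}: with $\lambda_\mu=\infty$ the two-scale comparison cannot ``coincide with the limsup version of the symbolic dimension.'' The paper's proof instead covers $B(x,r)$ from inside by a \emph{block of same-level cylinders} $I(a_1,\hdots,a_{n-1},a_n-k)$, $0\le k\le k_1$ (with a separate case when $B(x,r)$ already contains a cylinder with a fixed digit $k_0$), and the key new input is Lemma \ref{ineqsums}, which bounds $\log\sum_{m=k}^{n+k}p_m\big/\log\sum_{m=k-1}^{n+k+1}r_m$ by $\frac{1+\delta}{\alpha-\delta}+\eta\approx s$ uniformly in the block length; nothing in your sketch plays this role.

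Second, your mechanism for $\underline{d}(x)=0$ is not correct as stated. A ball of radius $r_n=|I_n(x)|\,t_n$ with $t_n\to 0$ centred at $x$ need not contain a fixed fraction of $\mu(I_n(x))$ (the mass inside $I_n(x)$ may sit near the far endpoint), and even granting it, the quotient $\log\mu(I_n(x))/(\log|I_n(x)|+\log t_n)$ tends to $0$ only if $|\log t_n|\gg|\log|I_n(x)||$, in which case the measure lower bound certainly fails; moreover your heuristic ``a single digit contributes $-\log q_{a_{n+1}}$ against $-\log r_{a_{n+1}}$'' produces the ratio $s$, not $0$. The quantity that actually drives the dimension to zero is the \emph{tail} decay ratio $\hat{s}$: the paper first proves $p_n\ge C n^{-(1+\delta)}$ (because $s=s_\infty=1/\alpha$ and $h_\mu=\infty$), deduces $\hat{s}=0$ (tails of $\{p_n\}$ decay slower than any power while tails of $\{r_n\}$ decay like $n^{-(\alpha-1)}$), and then in Proposition \ref{upperbound} puts inside $B(x,r)$ the infinite union $\bigcup_{m\ge 0} I(a_1,\hdots,a_{n-1},a_n+m)$, whose $\log$-measure over $\log$-length is governed by $\hat{s}$ in the last coordinate; the trimmed-sum input (Theorem \ref{aarnak} via Proposition \ref{limsup}) is then used to make that last-coordinate term dominate the Birkhoff sums along a subsequence, giving $\underline{d}(x)\le\hat{s}=0$. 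You correctly identified that the trimmed sums must produce anomalous scales, and your lower bound $\overline{d}(x)\ge s$ and the final mass-distribution step are fine, but without the block-covering lemma for the packing bound and without the tail ratio $\hat{s}=0$ for the Hausdorff bound, the two central claims of the theorem are not proved.
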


\section{Symbolic dimension} \label{sec:symb}

\subsection{Computation of the symbolic dimension}

We prove now that under the above assumptions, the Gibbs measure $\mu$ is symbolic exact dimensional, and this dimension coincides with the decay ratio. This result does not depend on the length decaying ratio of the partition of the interval.

In general the Lyapunov exponent majorizes the entropy. In a more general setting, this result is known as Ruelle's inequality (see \cite{ruelle1978inequality}).

\begin{prop}
If $h_\mu=\infty$ then $\lambda_\mu=\infty$.
\end{prop}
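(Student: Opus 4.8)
The plan is to prove the contrapositive: assuming $\lambda_\mu < \infty$, show that $h_\mu < \infty$. Recall from the excerpt that, up to the additive distortion constant $L$, we have $\lambda_\mu = -\sum_n q_n \log r_n + L$ and $h_\mu = -\sum_n q_n \log q_n$, where $q_n = \varphi(x_n)$ is comparable (by the Gibbs property) to $p_n = \mu(I(n))$, and where $\{q_n\}$ satisfies the ratio bounds of Assumption \ref{assumption}. So the statement reduces to the purely analytic claim: if $\sum_n q_n |\log r_n| < \infty$ then $\sum_n q_n |\log q_n| < \infty$.

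First I would compare $q_n$ and $r_n$ termwise. The key point is that $\sum_n r_n = 1 = \sum_n q_n$ (both are probability vectors, the $r_n$ being lengths of a partition of $[0,1]$ and the $q_n$ being, up to bounded constants, the measures $p_n$). Since both sequences are non-increasing (by the Gauss-like hypothesis on $r_n$, and by Assumption \ref{assumption} together with monotonicity arguments for $q_n$ — or one can work directly with $p_n$), a standard fact about monotone summable sequences gives $r_n \le C/n$ and likewise control on partial tails. The heart of the matter is to bound $-\sum_n q_n \log q_n$ by a constant times $-\sum_n q_n \log r_n$ plus a finite error. The natural route: split $\N$ into the set $A = \{n : q_n \le r_n\}$ and $B = \{n : q_n > r_n\}$. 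On $A$, we have $-\log q_n \ge -\log r_n > 0$, so that bound goes the wrong way and needs more care; on $B$, $-\log q_n < -\log r_n$, so $\sum_{n\in B} q_n(-\log q_n) \le \sum_{n \in B} q_n(-\log r_n) \le \lambda_\mu + |L| < \infty$, which is immediate.

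For the set $A$ where $q_n \le r_n$, I would use the elementary inequality $q_n \log(r_n/q_n) \le r_n$, valid since $t \mapsto t\log(1/t)$-type estimates give $q\log(r/q) \le r/e \le r$ for $0 < q \le r$ (more precisely, $x \log(1/x) \le 1/e$ applied with $x = q_n/r_n$ gives $q_n \log(r_n/q_n) \le r_n/e$). Summing over $A$: $\sum_{n\in A} q_n \log(1/q_n) = \sum_{n\in A} q_n\log(1/r_n) + \sum_{n\in A} q_n \log(r_n/q_n) \le (\lambda_\mu + |L|) + \sum_{n\in A} r_n/e \le \lambda_\mu + |L| + 1/e < \infty$. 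Combining the two pieces yields $h_\mu = \sum_A + \sum_B < \infty$, contradicting $h_\mu = \infty$; hence $\lambda_\mu = \infty$ whenever $h_\mu = \infty$.

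I expect the main obstacle to be bookkeeping around the distortion constant $L$ and the comparability constants $A, B$ between $\mu(I(n))$ and $\exp(S_1 \log\varphi)$ — one must make sure that replacing $q_n$ by $p_n$ (or vice versa) only changes the entropy and Lyapunov sums by a finite amount, which follows from $\var_1(\log\varphi) < \infty$ and the Gibbs bounds, but should be stated carefully. A secondary subtlety is that $\sum_n q_n$ need not be exactly $1$ if one works with $q_n = \varphi(x_n)$ rather than $p_n$; however, since $q_n \asymp p_n$ and $\sum p_n = 1$, the series $\sum q_n$ converges, and one can absorb the resulting constant into the finite error terms. Everything else is the elementary inequality $x\log(1/x) \le 1/e$ and splitting the sum, so no further machinery is needed.
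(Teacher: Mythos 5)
Your argument is correct, but it is a genuinely different proof from the one in the paper. The paper disposes of this proposition in one line by invoking the Volume Lemma (Theorem \ref{volumeLemma} of Mauldin--Urbanski): if $\lambda_\mu<\infty$ while $h_\mu=\infty$, that theorem applies and gives $\dim_H(\mu\circ\pi^{-1})=h_\mu/\lambda_\mu=\infty$, which is absurd for a measure on $[0,1]$. You instead prove the contrapositive directly at the level of series: using $h_\mu\asymp-\sum_n p_n\log p_n$ and $\lambda_\mu\asymp-\sum_n p_n\log r_n$, you split $\N$ into $\{q_n>r_n\}$ (where the comparison is immediate) and $\{q_n\le r_n\}$ (where $q_n\log(r_n/q_n)\le r_n/e$ handles the discrepancy), so that $-\sum_n q_n\log q_n\le-\sum_n q_n\log r_n+1/e+O(1)$. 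This is in effect the nonnegativity of the relative entropy of $(p_n)$ against the sub-probability vector $(r_n)$, i.e.\ the classical Gibbs inequality, and it makes the mechanism transparent and self-contained, at the cost of the bookkeeping you flag (passing between $q_n$ and $p_n$ via the Gibbs bounds and $\var_1(\log\varphi)<\infty$, which is legitimate). Two small remarks: you only need $\sum_n r_n\le 1$ (true for any EMR map, since the $I(n)$ are disjoint subintervals of $[0,1]$), not the Gauss-like normalization $\sum_n r_n=1$, so your proof actually covers the generality in which the proposition is stated; and the aside about monotonicity giving $r_n\le C/n$ is never used and can be deleted. Terms with $q_n>1$, if any, contribute negatively to $-\sum_n q_n\log q_n$ and cause no harm, or can be avoided altogether by working with $p_n$ throughout, as you suggest. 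The paper's route is shorter given the cited theorem; yours avoids any appeal to the dimension theory of countable Markov systems.
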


\begin{proof}
This is an immediate consequence of the Volume Lemma (theorem \ref{volumeLemma}): if $\lambda_\mu\neq \infty$, then $\dim_H \mu = \infty$ which is impossible.
\end{proof}

We prove a well known fact about non-integrable observables.

\begin{lema} \label{infinitelyap}
Let $f\colon [0,1]\to\R$ be a bounded below measurable function such that $\int_0^1 f\mathrm{d}\mu = \infty$. Then
\begin{align*}
\lim_{n\to\infty}\dfrac{1}{n}\sum_{k=0}^{n-1} f \circ T^ k = \infty
\end{align*}
for $\mu$ almost every point.
\end{lema}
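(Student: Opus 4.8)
The plan is to reduce the statement to the ordinary Birkhoff ergodic theorem by truncating the observable. First I would fix $M>0$ and set $f_M = \min\{f, M\}$, which is bounded (hence integrable) and still measurable; since $f$ is bounded below, so is $f_M$, and $f_M \le f$. Note that $\int_0^1 f_M \, \mathrm{d}\mu \to \int_0^1 f \, \mathrm{d}\mu = \infty$ as $M \to \infty$ by the monotone convergence theorem. Because $\mu$ is ergodic and $T$-invariant, Birkhoff's ergodic theorem applies to each $f_M$, giving
\begin{align*}
\lim_{n\to\infty} \frac{1}{n}\sum_{k=0}^{n-1} f_M \circ T^k = \int_0^1 f_M \, \mathrm{d}\mu
\end{align*}
for $\mu$-almost every point. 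Since $f \ge f_M$ pointwise, on the full-measure set where the above limit holds we get
\begin{align*}
\liminf_{n\to\infty} \frac{1}{n}\sum_{k=0}^{n-1} f \circ T^k \ \ge\ \int_0^1 f_M \, \mathrm{d}\mu.
\end{align*}

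The only subtlety is that the exceptional null set produced by Birkhoff's theorem depends on $M$, so one cannot let $M \to \infty$ along the continuum. To handle this I would restrict to integer values $M \in \N$: let $N_M$ be the $\mu$-null set where the Birkhoff average of $f_M$ fails to converge to $\int f_M\,\mathrm{d}\mu$, and put $N = \bigcup_{M \in \N} N_M$, still a $\mu$-null set by countable subadditivity. For every $x \notin N$ and every $M \in \N$ we then have $\liminf_n \frac1n \sum_{k=0}^{n-1} f(T^k x) \ge \int f_M \, \mathrm{d}\mu$; taking the supremum over $M \in \N$ and using $\int f_M \, \mathrm{d}\mu \to \infty$ yields
\begin{align*}
\liminf_{n\to\infty} \frac{1}{n}\sum_{k=0}^{n-1} f \circ T^k = \infty,
\end{align*}
hence the limit exists in $[0,\infty]$ and equals $\infty$ for $\mu$-a.e. $x$.

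There is no real obstacle here; the lemma is genuinely elementary, and the one point requiring a little care is precisely the order of quantifiers (choosing the countable exhaustion $M \in \N$ so that a single null set works for all truncation levels). I would also remark that boundedness below is used only to ensure the truncations $f_M$ are genuine $L^1$ functions to which Birkhoff applies without sign issues; if one preferred, one could instead write $f = f^+ - f^-$ with $f^- \in L^1$ and apply the argument to $f^+$.
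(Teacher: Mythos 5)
Your argument is correct and is essentially the paper's own proof: truncate to $f_M=\min\{f,M\}$, apply Birkhoff's ergodic theorem to the truncation, and let $M\to\infty$ via the monotone convergence theorem. The only difference is that you make explicit the countable choice of truncation levels to get a single null set, a bookkeeping point the paper leaves implicit.
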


\begin{proof}
The proof is an standard application of the Monotone Convergence Theorem. Assume $f$ is positive (otherwise, decompose $f$ into its positive and negative part) and let $M>0$. Then
\begin{align*}
    \liminf_{n\to\infty} \dfrac{1}{n}\sum_{k=0}^{n-1} f\circ T^k(x) &\geq \lim_{n\to\infty} \dfrac{1}{n}\sum_{k=0}^{n-1} \min\{ f\circ T^k,M\}(x)  \\
    &= \int_0^1 \min\{ f,M\}(x) \mathrm{d}\mu (x)
\end{align*}
by Birkhoff's Ergodic Theorem applied to $\min\{ f,M\}$. By the Monotone Convergence Theorem,
\begin{align*}
    \lim_{n\to\infty} \int_0^1 \min\{ f,M\}(x) \mathrm{d}\mu (x) = \int_0^1 f\mathrm{d}\mu(x) = \infty
\end{align*}
from where we conclude the result.
\end{proof}

This result implies in particular that we can assume that the pressure of our potential is zero, as $S_n(\log \varphi)$ dominates $-nP(\log\varphi)$ when $\log\varphi$ is not integrable.

We formulate a lemma regarding the metric and measure theoretic properties of the cylinders associated to the map. This will allow us to write geometric quantities in ergodic theoretic terms. Its proof is a standard applications of the bounded distortion and Gibbs properties.

\begin{lema} \label{metricprop}
For every finite sequence $(a_1,\hdots,a_n)\in\N^n$ and $m\in\N$, we have that
\begin{enumerate}[(a)] 
\item $
 | \log|I(a_1,\hdots,a_n)| - \sum_{k=1}^n\log r_{a_k} | \leq  nD_1+D_2,
$
\item $
\big | \log|\bigcup_{m=0}^t I(a_1,\hdots,a_{n-1},a_n+m)| - \sum_{k=1}^{n-1}\log r_{a_k} - \log\left(\sum_{k=m}^t r_{a_n+k}\right)\big| \leq  nD_1+D_2,
$
\item $
\big | \log|\bigcup_{m=0}^\infty I(a_1,\hdots,a_{n-1},a_n+m)| - \sum_{k=1}^{n-1}\log r_k - \log(\sum_{k=m}^\infty r_{a_n+k})\big| \leq  nD_1+D_2,
$
\item $ | \log \mu (I(a_1,\hdots,a_n)) - \sum_{k=1}^{n}\log p_{a_k} | \leq  nG_1+G_2, $
\item $
 |\log \mu(\bigcup_{m=0}^t I(a_1,\hdots,a_{n-1},a_n+m)) - \sum_{k=1}^{n-1}\log p_{a_k} - \log(\sum_{k=m}^t p_{a_n+k})| \leq  nG_1+G_2,
$
\item $
 |\log \mu(\bigcup_{m=0}^\infty I(a_1,\hdots,a_{n-1},a_n+m)) - \sum_{k=1}^{n-1}\log p_{a_k} - \log\left(\sum_{k=m}^\infty p_{a_n+k}\right)| \leq  nG_1+G_2,
$
\end{enumerate}
where $D_1,D_2$ are distortion constants and $G_1,G_2$ are constants arising from the Gibbs property.
\end{lema}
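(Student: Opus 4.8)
The plan is to reduce every item to two inputs that are already available: the uniform bounded-distortion estimate $0<D^{-1}\le |(T^n)'(x)|\,|I(a_1,\dots,a_n)|\le D$ from Section~\ref{sec:not}, and the Gibbs property with $P(\log\varphi)=0$ (legitimate by Lemma~\ref{infinitelyap}). Both already compare a level-$n$ object to a Birkhoff product or sum along the orbit of a point, so the only real work is to telescope that Birkhoff quantity into $\sum_{k=1}^n\log r_{a_k}$ (resp.\ $\sum_{k=1}^n\log p_{a_k}$) at a cost linear in $n$. I would start with the $n=1$ case of each input: from $D^{-1}\le |T'(x)|\,r_j\le D$ for $x\in I(j)$ one gets $\big|\log|T'(x)|+\log r_j\big|\le\log D$ uniformly in $j$; and from the Gibbs property at level one (pressure zero) one gets $A\le \mu(C(j))/\varphi(y)\le B$ for every $y\in I(j)$, so $p_j$ and $q_j$ are comparable and, using $\var_1(\log\varphi)<\infty$ from Assumption~\ref{assumption}, $\big|\log\varphi(y)-\log p_j\big|\le \var_1(\log\varphi)+\max(|\log A|,|\log B|)=:G_1$ for all $y\in I(j)$.

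For (a), the chain rule gives $|(T^n)'(x)|=\prod_{k=0}^{n-1}|T'(T^kx)|$, and since $T^kx\in I(a_{k+1})$ the single-symbol estimate yields $\big|\log|(T^n)'(x)|+\sum_{k=1}^n\log r_{a_k}\big|\le n\log D$; combining with the bounded-distortion estimate for the cylinder itself gives (a) with $D_1=D_2=\log D$. For (d), the Gibbs property gives $\log\mu(C(a_1,\dots,a_n))=S_n(\log\varphi)(x)+O(1)$ with the $O(1)$ bounded by $\max(|\log A|,|\log B|)$, and applying the single-symbol estimate to each term $\log\varphi(T^kx)$ with $T^kx\in I(a_{k+1})$ replaces $S_n(\log\varphi)(x)$ by $\sum_{k=1}^n\log p_{a_k}$ at the cost of $nG_1$, which is (d) with $G_2=\max(|\log A|,|\log B|)$.

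For the remaining items I would use the Markov structure: $\bigcup_{j=0}^t I(a_1,\dots,a_{n-1},a_n+j)=I(a_1,\dots,a_{n-1})\cap T^{-(n-1)}\bigl(\bigcup_{j=0}^t I(a_n+j)\bigr)$, and because the branch intervals tile $[0,1]$ in decreasing order the set $J:=\bigcup_{j=0}^t I(a_n+j)$ is a genuine interval with $|J|=\sum_{j=0}^t r_{a_n+j}$. Pulling $J$ back by the inverse branch of $T^{n-1}$ on $I(a_1,\dots,a_{n-1})$ and using bounded distortion ($|(T^{n-1})'|$ varies by at most a factor $D^2$ on that cylinder and is comparable to $|I(a_1,\dots,a_{n-1})|^{-1}$) shows $|\bigcup_{j=0}^t I(a_1,\dots,a_{n-1},a_n+j)|$ is comparable, up to a factor $D^{\pm 2}$, to $|I(a_1,\dots,a_{n-1})|\cdot|J|$; taking logarithms and invoking (a) for $I(a_1,\dots,a_{n-1})$ gives (b), and letting $t\to\infty$ gives (c) (the tail $\sum_{j\ge m}r_{a_n+j}$ converges since $\sum_n r_n=1$). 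Items (e) and (f) are the same argument with $\mu$ in place of Lebesgue measure: since $\mu$ is $T$-invariant one has $\mu\bigl(\bigcup_j C(a_1,\dots,a_{n-1},a_n+j)\bigr)=\sum_j\mu(C(a_1,\dots,a_{n-1},a_n+j))$, the Gibbs property together with $\var_1(\log\varphi)<\infty$ shows each summand is comparable, up to a factor $e^{\pm (n-1)G_1}$, to $\mu(C(a_1,\dots,a_{n-1}))\cdot p_{a_n+j}$ (the first-$(n-1)$-symbol portion of the Birkhoff sum varies by at most $(n-1)\var_1(\log\varphi)$ over these cylinders and $\varphi(x_{a_n+j})$ is comparable to $p_{a_n+j}$), and then summing, taking logs, and using (d) for $\mu(C(a_1,\dots,a_{n-1}))$ gives (e), with $t\to\infty$ giving (f).

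\textbf{Expected main obstacle.} Nothing here is deep, but the bookkeeping must be arranged so that all errors stay linear in $n$ and independent of $t$ and of the sizes of the digits $a_k$: one has to ensure that the distortion of $T^{n-1}$ across the whole family $\{I(a_1,\dots,a_{n-1},c)\}_c$, and correspondingly the variation of $S_{n-1}(\log\varphi)$ over that family, are controlled by a fixed multiple of $n$ rather than by something growing with the digits or with the range of $c$. This is exactly where Renyi's condition (through the single uniform constant $D$) and the hypothesis $\var_1(\log\varphi)<\infty$ are used, and it is the only place where a small amount of care is required.
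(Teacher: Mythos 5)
Your proposal is correct and is exactly the ``standard application of the bounded distortion and Gibbs properties'' that the paper invokes without writing out: single-branch Renyi/Gibbs estimates telescoped along the orbit for (a) and (d), and the pullback of the union of adjacent branch intervals (resp.\ additivity of $\mu$ over the disjoint cylinders) for (b), (c), (e), (f). Only trivial quibbles: the additivity in (e)--(f) needs only disjointness of the cylinders, not $T$-invariance, and the identification of $\bigcup_j I(a_n+j)$ with an interval of length $\sum_j r_{a_n+j}$ uses that the branches tile $[0,1]$ (as in the Gauss-like setting where the lemma is applied).
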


We proceed to compute the symbolic dimension of our system. This result holds regardless of the decay rate of the sequence $\{r_n\}$.

\begin{teo}\label{markovexact}
Let $T$ be an EMR map and $\mu$ a Gibbs measure with infinte entropy satisfying Assumption 1. Then if the decay ratio exists, we have that $\mu$ is symbolic-exact dimensional and for $\mu$-almost every $x\in I$, 
\begin{align*}
\delta(x)=s.
\end{align*}
\end{teo}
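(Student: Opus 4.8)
The plan is to express the symbolic dimension ratio in ergodic-theoretic terms using Lemma \ref{metricprop} and then show that the non-distortion parts dominate, so the bounded error terms become negligible. Concretely, for $\mu$-a.e.\ $x$ with coding sequence $(a_n)$, parts (a) and (d) of Lemma \ref{metricprop} give
\begin{align*}
\frac{\log\mu(I_n(x))}{\log|I_n(x)|} = \frac{\sum_{k=1}^n \log p_{a_k} + O(n)}{\sum_{k=1}^n \log r_{a_k} + O(n)}.
\end{align*}
The first step is to observe that since $\lambda_\mu = \infty$, Lemma \ref{infinitelyap} applied to the non-negative observable $x\mapsto -\log r_{a_1(x)}$ (which is bounded below and has infinite $\mu$-integral, being comparable to $\log|T'|$ up to the distortion constant $L$) yields $\frac{1}{n}\sum_{k=1}^n (-\log r_{a_k}) \to \infty$ $\mu$-a.e. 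Hence the $O(n)$ error term in the denominator is negligible relative to $\sum_{k=1}^n \log r_{a_k}$, and similarly, since $h_\mu = \infty$, applying Lemma \ref{infinitelyap} to $-\log p_{a_1(x)}$ shows the $O(n)$ term in the numerator is negligible relative to $\sum_{k=1}^n \log p_{a_k}$. Therefore
\begin{align*}
\delta(x) = \lim_{n\to\infty} \frac{\sum_{k=1}^n \log p_{a_k}}{\sum_{k=1}^n \log r_{a_k}}
\end{align*}
in the sense that the limit of the geometric ratio exists iff this symbolic ratio does, and they are equal.

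The second step is to identify this limit with $s$. Write $\sum_{k=1}^n \log p_{a_k} = \sum_{k=1}^n \frac{\log p_{a_k}}{\log r_{a_k}}\cdot \log r_{a_k}$. By the definition of the decay ratio, $\frac{\log p_m}{\log r_m} \to s$ as $m\to\infty$; moreover Assumption \ref{assumption} together with $K \le r_{n+1}/r_n$ guarantees that the quotients $\log p_m/\log r_m$ stay in a bounded range and, more importantly, that the "weights" $-\log r_{a_k}$ grow without interfering badly. The cleanest route is a Toeplitz/weighted-average argument: if $b_m := \log p_m/\log r_m \to s$ and $w_k := -\log r_{a_k} > 0$ with $\sum_{k=1}^n w_k \to \infty$ $\mu$-a.e.\ (which we have), then $\frac{\sum_{k=1}^n b_{a_k} w_k}{\sum_{k=1}^n w_k} \to s$ \emph{provided} that $\max_{k\le n} w_k / \sum_{k\le n} w_k \to 0$, i.e.\ no single term dominates. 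This last condition needs the polynomial-type control: since $r_{n+1}/r_n \ge K$, we have $-\log r_{a_k} \le -\log r_1 + |a_k|\log(1/K)$ roughly, so a single $w_k$ is at most of order $\log a_k$, while the running sum $\sum w_k$ tends to infinity — but one must rule out that $a_n$ is so astronomically large infinitely often that $w_n$ swamps the partial sum. Here one uses that $\mu(a_n = j) = p_j$ with $\sum_j p_j \log r_j$ possibly divergent but controlled: a Borel–Cantelli argument shows $-\log r_{a_n} = o(\sum_{k=1}^n -\log r_{a_k})$ a.e., because the partial sums grow linearly-fast-enough after the infinite-integral divergence is accounted for. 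I would alternatively bypass the weighted average entirely by bounding: for any $\eps>0$, eventually $b_m \in (s-\eps, s+\eps)$ for all $m \ge N$, split the sum over $\{k : a_k < N\}$ and $\{k: a_k \ge N\}$, note the first set contributes a bounded-per-term amount over at most $n$ indices hence is $o(\sum w_k)$, and the second is pinched between $(s\mp\eps)\sum_{a_k\ge N} w_k$.

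The third step is to assemble the two-sided bound into $\underline{\delta}(x) = \overline{\delta}(x) = s$ $\mu$-a.e., which is immediate once the limit above is shown to exist and equal $s$ on a full-measure set, since then both $\liminf$ and $\limsup$ equal $s$. I expect the main obstacle to be precisely the non-domination condition in the second step: because the potential and the lengths are non-integrable, $\sum_{k=1}^n w_k$ grows faster than linearly, but one still must show no single large digit $a_n$ produces a $w_n$ comparable to the whole partial sum. This is the one place the hypotheses $h_\mu = \infty$, $\lambda_\mu = \infty$, the Gibbs property (relating $p_n$ and $q_n$), and the ratio bounds in Assumption \ref{assumption} all genuinely interact; a clean way is to show $\frac{1}{n}\sum_{k=1}^n w_k \to \infty$ and $w_n/n \to 0$ a.e.\ (the latter from $\sum_n \mu(w_n > \eps n) = \sum_n \mu(a_n > e^{c\eps n}) \le \sum_n \sum_{j > e^{c\eps n}} p_j < \infty$ by a tail estimate on $p_j$, which holds since $\sum p_j = 1$), whence $\max_{k\le n} w_k = o(\sum_{k\le n} w_k)$ and the Toeplitz argument applies. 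Everything else — the reduction via Lemma \ref{metricprop} and Lemma \ref{infinitelyap}, and the final packaging — is routine.
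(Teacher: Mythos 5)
Your reduction via Lemma \ref{metricprop} and Lemma \ref{infinitelyap}, and the argument you call the ``bypass'' --- fix $\varepsilon>0$, choose $N$ so that $\log p_m/\log r_m\in(s-\varepsilon,s+\varepsilon)$ for $m\geq N$, split the Birkhoff sums over the indices with $a_k<N$ and $a_k\geq N$, observe that the small-digit block contributes $O(n)$ to numerator and denominator while the full denominator grows superlinearly (Lemma \ref{infinitelyap} applied to $-\log r_{a_1}$), and pinch the large-digit block between $(s\mp\varepsilon)$ times the corresponding length sum --- is correct and is essentially the paper's proof of Theorem \ref{markovexact}. The paper merely bookkeeps the same splitting through the visit counts $f_{n,k}$, showing the term $S_1(n)$ (small digits) tends to $0$ and $\limsup S_2(n)\leq s+\varepsilon$; no additional idea is involved.

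However, the route you present as primary --- a Toeplitz argument requiring the non-domination condition $\max_{k\leq n} w_k = o\big(\sum_{k\leq n} w_k\big)$ with $w_k=-\log r_{a_k}$ --- cannot be repaired, and your Borel--Cantelli justification of it is wrong. Since $\{w_n\}$ is stationary with $\E[w_1]$ comparable to $\lambda_\mu=\infty$, one has $\sum_n \mu(w_n>\varepsilon n)=\sum_n\mu(w_1>\varepsilon n)\asymp \varepsilon^{-1}\E[w_1]=\infty$ for every $\varepsilon>0$; the finiteness of $\sum_n\sum_{j>e^{c\varepsilon n}}p_j$ is not a consequence of $\sum_j p_j=1$ but (in the Gauss-like case, where $-\log r_j\asymp\log j$) of $\sum_j p_j\log j<\infty$, i.e.\ precisely of the finiteness of the Lyapunov exponent, which is excluded by hypothesis. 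Indeed the failure of non-domination is a theorem in this setting: Proposition \ref{limsup}, proved via Aaronson's theorem \ref{aar} and the trimmed-sum result of Theorem \ref{aarnak}, gives $\limsup_n w_n/\sum_{k<n}w_k=\infty$ almost everywhere, so a single digit does swamp the partial sum infinitely often. This does not damage your conclusion, because the pinching argument never uses non-domination; but the Toeplitz discussion should be discarded, as it asserts exactly the behaviour that the infinite-Lyapunov regime rules out (and which the paper later exploits to show $\underline{d}_\mu=0$).
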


\begin{proof}
By Lemma \ref{infinitelyap} applied to the observables $\log\varphi$ and $\log r_{a_1}$ and Lemma \ref{metricprop}, we have
\begin{align*}
\underline{\delta}(x)\leq  \liminf_{n\to\infty}\dfrac{S_n(\log\varphi)(x)}{- nD_1 - D_2 + S_n(\log r_{a_1})(x)} =  \liminf_{n\to\infty} \dfrac{\log(q_{a_1}\hdots q_{a_n})}{\log(r_{a_1}\hdots r_{a_n})} , \\
\underline{\delta}(x)\geq  \liminf_{n\to\infty}\dfrac{S_n(\log\varphi)(x)}{ nD_1+D_2 + S_n(\log r_{a_1})(x)} =  \liminf_{n\to\infty} \dfrac{\log(q_{a_1}\hdots q_{a_n})}{\log(r_{a_1}\hdots r_{a_n})} ,
\end{align*}
and analogously for the upper symbolic dimension
\begin{align*}
\overline{\delta}(x)= \limsup_{n\to\infty} \dfrac{\log(q_{a_1}\hdots q_{a_n})}{\log(r_{a_1}\hdots r_{a_n})}
\end{align*}
where $(a_1,a_2,\hdots)$ is the sequence coding $x$. With a similar argument, we can also show that
\begin{align*}
    \underline{\delta}(x)=  \liminf_{n\to\infty} \dfrac{\log(p_{a_1}\hdots p_{a_n})}{\log(r_{a_1}\hdots r_{a_n})} ,
\end{align*}
and analogously for the upper symbolic dimension.

For $x\in I$ and $n,k\geq 1$, define 
\begin{align*}
f_{n,k}(x)= \#\{ i\in\{ 1,\hdots, n\}\mid  k_i(x)=k \},
\end{align*}
that is, the number of times the orbit of $x$ visits the interval $I_k$ in the first $n$ steps. Recall that from the Birkhoff Theorem, we have that 
\begin{align*}
\lim_{n\to\infty}\dfrac{f_{n,k}}{n} = p_k
\end{align*}
for $\mu-$almost every $x\in I$. In particular, the orbit of almost every $x\in I$ visits every cylinder $I(n)$ infinitely many times. Fix $x$ in the set where the convergence holds, and then define $m\colon \N\to \N $ by $m(n)=\max\{ k_i(x)\mid i\in\{1,\hdots,n \} \}$. The previous remark shows that $m$ is unbounded, and it is clearly non-decreasing. Thus, we can write
\begin{align*}
-\log(r_{k_1}\hdots r_{k_n})=-\sum_{j=1}^n\log r_{k_j} = -\sum_{j=1}^{m(n)}f_{n,j}\log r_{j}.
\end{align*}

Given $\epsilon>0$, there exists $n_1$ such that
\begin{align*}
\Bigg| \dfrac{\log p_k}{\log r_k} -s \Bigg|<\epsilon
\end{align*}
for every $k\geq n_1$, that is, $(-\log p_k) < (\epsilon+s)(-\log r_k)$ for $k\geq n_1$. For $n$ large enough so that $m(n)>n_1$, we write
\begin{align*}
\dfrac{\log(p_{k_1}\hdots p_{k_n})}{\log(r_{k_1}\hdots r_{k_n})}=\dfrac{\displaystyle\sum_{k=1}^{n_1}f_{n,k}(-\log p_k)+\sum_{k=n_1+1}^{m(n)}f_{n,k}(-\log p_k)}{\displaystyle\sum_{k=1}^{n_1}f_{n,k}(-\log r_k)+\sum_{k=n_1+1}^{m(n)}f_{n,k}(-\log r_k)}.
\end{align*}
We analyse separately the two bits of the sum: 
\begin{align*}
S_1(n)&=\dfrac{\displaystyle\sum_{k=1}^{n_1}f_{n,k}(-\log p_k)}{\displaystyle\sum_{k=1}^{n_1}f_{n,k}(-\log r_k)+\sum_{k=n_1+1}^{m(n)}f_{n,k}(-\log r_k)},\\
S_2(n)&=\dfrac{\displaystyle\sum_{k=n_1+1}^{m(n)}f_{n,k}(-\log p_k)}{\displaystyle\sum_{k=1}^{n_1}f_{n,k}(-\log r_k)+\sum_{k=n_1+1}^{m(n)}f_{n,k}(-\log r_k)}.
\end{align*}
For $k=1,\hdots, n_1$ taking $\epsilon_k=p_k/2$ there exists $n_2\geq n_1$ such that
\begin{align*}
\dfrac{np_k}{2}\leq f_{n,k} \leq \dfrac{3np_k}{2}
\end{align*}
for every $n\geq n_2$. Thus, the terms $\sum_{k=1}^{n_1}f_{n,k}(-\log p_k)$ and $\sum_{k=1}^{n_1}f_{n,k}(-\log r_k)$ grow linearly in $n$ for $n$ large enough. We will show that $\sum_{k=n_1+1}^{m(n)}f_{n,k}(-\log r_k)$ grows faster than linear. 

Given $M>0$, since the Lyapunov exponent is infinite, there exists $n_3$ such that
\begin{align*}
\sum_{k=n_1+1}^{m(n)}p_k(-\log r_k) > 2M
\end{align*}
for every $n\geq n_3$. Now, for $k=n_1+1,\hdots,m(n_3)$, take $\epsilon_k=p_k/2$ and so there exists $n_4\geq n_3$ such that
\begin{align*}
f_{n,k}\geq \dfrac{np_k}{2}
\end{align*}
for every $n\geq n_4$ and $k=n_1+1,\hdots,m(n_3)$. Thus
\begin{align*}
\dfrac{1}{n}\sum_{k=n_1+1}^{m(n)} f_{n,k}(-\log r_k)&=\dfrac{1}{n}\sum_{k=n_1+1}^{m(n_4)} f_{n,k}(-\log r_k)+\dfrac{1}{n}\sum_{k=m(n_4)+1}^{m(n)} f_{n,k}(-\log r_k)\\
&\geq \dfrac{1}{n} \sum_{k=n_1+1}^{m(n_4)} \dfrac{np_k}{2}(-\log r_k) \\
&=\dfrac{1}{2}\sum_{k=n_1+1}^{m(n_4)} p_k(-\log r_k) > M
\end{align*}
for every $n\geq n_4$. This shows that $S_1(n)\to 0$ as $n\to\infty$. To estimate $S_2(n)$, we note that
\begin{align*}
S_2(n) \leq (s+\epsilon)\cdot \dfrac{\displaystyle\sum_{k=n_1+1}^{m(n)}f_{n,k}(-\log r_k)}{\displaystyle\sum_{k=1}^{n_1}f_{n,k}(-\log r_k)+\sum_{k=n_1+1}^{m(n)}f_{n,k}(-\log r_k)}
\end{align*}
Using the same argument as above, we can show that $\sum_{k=n_1+1}^{m(n)}f_{n,k}(-\log r_k)$ grows faster than linear, so $\lim S_2(n)\leq s+\epsilon$. This shows that 
\begin{align*}
\delta(x) \leq s
\end{align*}
The proof of the opposite inequality is analogous.
\end{proof}

\subsection{The decay ratio}

Now we proceed to study the properties of the decay. In fact, we show that for infinite entropy measures, it is completely determined by the properties of the partition $\{I(n)\mid n\in\N \}$:
\begin{defi}
The \emph{convergence exponent} of the partition $\{r_n \}$ of $I$ is defined by
\begin{align*}
    s_\infty = \inf \Big\{ s\geq 0 \mid \sum_{n=1}^\infty r_n^s < \infty \Big\}.
\end{align*}
\end{defi}

\begin{prop}\label{convexp}
In general, we have that $s_\infty \leq s$. Under the assumption that $h_\mu=\infty$, we also have $s\leq s_\infty$.
\end{prop}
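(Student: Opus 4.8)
The plan is to prove the two inequalities in Proposition \ref{convexp} separately, using essentially only that $\{p_n\}$ is a (sub)probability vector, that $r_n\to 0$ with $0<r_n<1$, that $0<p_n<1$ for $n$ large, and — for the second inequality only — the hypothesis $h_\mu=-\sum_n p_n\log p_n=\infty$.

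To obtain $s_\infty\le s$ (no entropy assumption needed), fix any $t>s$. Since $\log p_n/\log r_n\to s$ and $\log r_n<0$, for all sufficiently large $n$ we have $\log p_n/\log r_n<t$; multiplying through by $\log r_n<0$ reverses the inequality and gives $\log p_n>t\log r_n$, i.e. $p_n>r_n^{t}$. Hence for the corresponding threshold $N$ we get $\sum_{n\ge N}r_n^{t}\le\sum_{n\ge N}p_n\le 1<\infty$, so $\sum_n r_n^{t}<\infty$ and $s_\infty\le t$; letting $t\downarrow s$ yields $s_\infty\le s$.

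For the reverse inequality $s\le s_\infty$ under $h_\mu=\infty$, I would argue by contradiction: suppose $s_\infty<s$ and pick $t$ with $s_\infty<t<s$. Running the previous comparison with $t<s$ instead gives, for all large $n$, $p_n<r_n^{t}$; discarding finitely many more terms we may also assume $p_n,\,r_n^{t}<1/e$. On $(0,1/e)$ the function $x\mapsto -x\log x$ is increasing, so for all large $n$
\begin{align*}
-p_n\log p_n \le -r_n^{t}\log r_n^{t} = t\,r_n^{t}\,|\log r_n|.
\end{align*}
It then suffices to show $\sum_n r_n^{t}|\log r_n|<\infty$: since $r_n\to 0$ we have $|\log r_n|=o(r_n^{-\varepsilon})$ as $n\to\infty$ for every $\varepsilon>0$, and because $t>s_\infty$ we may choose $\varepsilon>0$ with $t-\varepsilon>s_\infty$; then $r_n^{t}|\log r_n|\le C_\varepsilon\, r_n^{\,t-\varepsilon}$ for large $n$, and $\sum_n r_n^{\,t-\varepsilon}<\infty$ by the very definition of $s_\infty$. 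Thus $h_\mu=-\sum_n p_n\log p_n<\infty$, a contradiction, so $s\le s_\infty$.

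The only genuinely delicate point — and the one I expect to be the main obstacle — is absorbing the logarithmic factor $|\log r_n|$ produced by comparing $-p_n\log p_n$ with $-r_n^{t}\log r_n^{t}$: this is precisely what forces the use of a \emph{strict} gap $t>s_\infty$ (so that a slightly smaller power of $r_n$ can swallow $|\log r_n|$) rather than the borderline exponent, and it is the place where the ``infimum over a half-line of exponents'' form of $s_\infty$ is used in an essential way. A minor technical care point is that $x\mapsto -x\log x$ is monotone only near the origin, so every comparison above must be carried out for $n$ large; and one should note in passing that, since $\sum_n r_n<\infty$ already forces $s_\infty\le 1$, this proposition shows infinite-entropy Gibbs measures always satisfy $s\le 1$.
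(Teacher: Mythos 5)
Your proof is correct and follows essentially the same route as the paper: the first inequality is the identical comparison $p_n>r_n^{t}$ for $t>s$, and the second is the same contradiction argument with an intermediate exponent, absorbing the logarithm by sacrificing a small power of $r_n$ above $s_\infty$ (the paper phrases this via a limit-comparison with $b_n=r_n^{s_\infty+\alpha}$ and the continuity of $x\mapsto x^{\epsilon}(-\log x)$, you via the monotonicity of $x\mapsto -x\log x$ near $0$ and the bound $|\log r_n|\le C_\varepsilon r_n^{-\varepsilon}$, which are only cosmetic differences).
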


\begin{proof}
Given $\epsilon>0$, there exists $n_1$ such that 
\begin{align*}
    (\epsilon+s)\log r_n < \log p_n < (s-\epsilon)\log r_n
\end{align*}
for every $n\geq n_1$, and thus $r_n^{s+\epsilon}<p_n $ for every $n\geq n_1$. Summing over $n$ we get
\begin{align*}
    \sum_{n=1}^\infty r_n^{s+\epsilon}= \sum_{n=1}^{n_1-1} r_n^{s+\epsilon} + \sum_{n=n_1}^\infty r_n^{s+\epsilon} \leq \sum_{n=1}^{n_1-1} r_n^{s+\epsilon}+\sum_{n=n_1}^\infty p_n <\infty.
\end{align*}
Hence, $s_\infty\leq s+\epsilon$ for every $\epsilon>0$ and so $s_\infty\leq s$.

Now, Suppose that $s_\infty < s$, and hence, there is $\alpha > 0$ such that $s_\infty \leq s_\infty + \alpha  < s$ and
\begin{align*}
    \sum_{n=1}^\infty r_n^{s_\infty + \alpha} < \infty.
\end{align*}
Let $\epsilon = (s-s_\infty - \alpha)/2>0$, then there is an integer $n_0$ such that
\begin{align*}
    r_n^{s+\epsilon} \leq p_n \leq r_n^{s-\epsilon}
\end{align*}
for all $n\geq n_0$. This implies that
\begin{align*}
    \sum_{n=n_0}^\infty p_n(-\log p_n) \leq (s+\epsilon) \sum_{n=n_0}^\infty r_n^{s-\epsilon}(-\log r_n).
\end{align*}

Recall the one sided limit criterion for convergence of series: let $a_b,b_n > 0$ sequences such that
\begin{align*}
    \limsup_{n\to\infty} \dfrac{a_n}{b_n} = c \in \left[ 0,\infty\right)
\end{align*}
and $\sum b_n < \infty$. Then $\sum a_n <\infty$.

Let $f\colon [0,\infty)\to \R$ be the function defined by
\begin{align*}
    f(x) = \left\{\begin{array}{lr}
        0, & \text{for } x =0,\\
        x^\epsilon(-\log x), & \text{for } x>0.
        \end{array} \right.
\end{align*}
It is easy to see that $f$ is continuous. Taking $a_n=r_n^{s-\epsilon}(-\log r_n)$ and $b_n=r_n^{s_\infty+\alpha}$ and using the continuity of $f$, we get that
\begin{align*}
    \limsup_{n\to\infty} \dfrac{a_n}{b_n} = \lim_{n\to\infty} r_n^\epsilon (-\log r_n) = 0.
\end{align*}
We conclude that 
\begin{align*}
\sum_{n=n_0}^\infty p_n(-\log p_n) \leq (s+\epsilon) \sum_{n=n_0}^\infty r_n^{s-\epsilon}(-\log r_n) < \infty,
\end{align*}
contradicting the fact that the entropy is infinite.

\end{proof}

We give now a definition for the asymptotic decay of the sequence $\{ r_n\}$.

\begin{defi}\label{decay}
The \emph{asymptotic} of the partition $\{r_n \}$ is defined as
\begin{align*}
     \alpha = \sup\{ t\geq 0 \mid  \lim_{n\to\infty} n^t r_n < \infty\}.
\end{align*}
We say that $\{ r_n\}$ decays \emph{polynomially} if $\alpha > 1$, and we say that $\{r_n\}$ decays \emph{superpolynomially} if $\alpha=\infty$.
\end{defi}

Note that if $r_n$ has polynomial decay with asymptotic $\alpha$, then $s_\infty = 1/\alpha$. If we know the asymptotic of $\{r_n\}$, we can compute the asymptotic of the tail of the series of $\{r_n\}$:

\begin{lema}\label{taildecay}
If the asymptotic of $\{r_n\}$ is $\alpha > 1$, then the asymptotic of $R_n = \sum_{m\geq n} r_n$ is $\alpha - 1$.
\end{lema}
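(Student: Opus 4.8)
The plan is to show directly from the definition of the asymptotic that $R_n = \sum_{m \geq n} r_m$ satisfies $\sup\{t \geq 0 \mid \lim_{n\to\infty} n^t R_n < \infty\} = \alpha - 1$. I will split this into the two inequalities: (i) for every $t < \alpha - 1$ we have $\lim_{n\to\infty} n^t R_n < \infty$, and (ii) for every $t > \alpha - 1$ we have $\lim_{n\to\infty} n^t R_n = \infty$ (or at least that the limit fails to be finite, which suffices to pin down the supremum).

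For (i), fix $t < \alpha - 1$ and pick $t'$ with $t + 1 < t' < \alpha$. By definition of $\alpha$, $\lim_{n\to\infty} n^{t'} r_n < \infty$, so there is a constant $C$ with $r_m \leq C m^{-t'}$ for all $m$. Then
\begin{align*}
R_n = \sum_{m \geq n} r_m \leq C \sum_{m \geq n} m^{-t'} \leq C' n^{-(t'-1)}
\end{align*}
by comparison with the integral $\int_{n-1}^\infty x^{-t'}\,dx$, valid since $t' > 1$. Hence $n^t R_n \leq C' n^{t - (t'-1)} \to 0$ because $t < t' - 1$. This gives (i), and in fact shows the limit is $0$ for all such $t$, so the asymptotic of $R_n$ is at least $\alpha - 1$.

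For (ii), the monotonicity of $\{r_n\}$ (condition (2) in the definition of a Gauss-like map, which is the ambient hypothesis here) is the key extra ingredient: since $r_m \geq r_{2n}$ for $n \leq m \leq 2n$, we get $R_n \geq \sum_{m=n}^{2n} r_m \geq (n+1) r_{2n}$, so $R_n \geq c\, n\, r_{2n}$. Now fix $t > \alpha - 1$ and suppose for contradiction that $n^t R_n$ stays bounded; then $n^{t+1} r_{2n}$ is bounded along all $n$, hence $(2n)^{t+1} r_{2n}$ is bounded, and by monotonicity one upgrades this to $k^{t+1} r_k$ bounded over all $k$ (using $r_k \leq r_{2\lfloor k/2\rfloor}$ and comparing $k$ with $2\lfloor k/2 \rfloor$). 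But $t + 1 > \alpha$ contradicts the definition of $\alpha$ as a supremum, since $\lim_{k} k^{t+1} r_k$ would then be finite. Therefore $n^t R_n$ is unbounded for every $t > \alpha - 1$, which combined with (i) yields that the asymptotic of $R_n$ equals $\alpha - 1$.

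The main obstacle is the upper bound direction (ii): passing from "$n^{t+1} r_{2n}$ bounded along the even-indexed subsequence" to "$k^{t+1} r_k$ bounded for all $k$", so that one may invoke the definition of $\alpha$. This is exactly where monotonicity of $\{r_n\}$ is used, and one must be slightly careful that the polynomial factor $k^{t+1}$ only changes by a bounded multiplicative factor when $k$ is replaced by $2\lfloor k/2 \rfloor$. A minor subtlety throughout is that the definition of $\alpha$ uses $\lim$ rather than $\limsup$; since $r_n$ is monotone and the weights $n^t$ are regularly varying, the relevant limits exist (or one argues with $\limsup$ and notes the supremum defining the asymptotic is unaffected).
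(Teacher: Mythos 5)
Your proof is correct, and the harder direction is argued quite differently from the paper. The first direction (summing the bound $r_m \leq C m^{-t'}$ to get $n^t R_n \to 0$ for $t<\alpha-1$) matches the paper's estimate of $n^{t-1}R_n$ from above. For the reverse inequality, however, the paper works with $r_n = R_n - R_{n+1}$: it assumes the two-sided limit $\lim n^{t-1}R_n = d'$ exists and differences the resulting sandwich bounds on consecutive tails to bound $r_n$, whereas you instead use monotonicity of $\{r_n\}$ to get $R_n \geq c\, n\, r_{2n}$, upgrade boundedness of $(2n)^{t+1}r_{2n}$ to boundedness of $k^{t+1}r_k$ by a doubling argument, and contradict the definition of $\alpha$. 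Your route buys something real: the lemma as literally stated (with no regularity on $\{r_n\}$ beyond having asymptotic $\alpha$) can fail for sequences with sparse spikes, and the paper's differencing step quietly relies on the existence of two-sided limits (and its $\epsilon$-terms do not obviously cancel at the claimed order), while your argument isolates exactly the extra hypothesis — monotonicity, which is condition (2) for Gauss-like maps, the only setting in which the lemma is applied — that makes the statement true. Two small remarks: your parenthetical claim that monotonicity of $r_n$ together with regular variation of $n^t$ forces the limits $\lim n^t r_n$ to exist is not correct in general, but your fallback is the right one, since the sets $\{t : \lim n^t r_n < \infty\}$ and $\{t : \limsup n^t r_n < \infty\}$ are both downward closed and have the same supremum; and the step ``$t' < \alpha$ implies $\lim n^{t'} r_n < \infty$'' implicitly uses this same downward closedness, which is worth a sentence.
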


\begin{proof}
It suffices to show that the sets $A=\{ t\geq 1 \mid  \lim_{n\to\infty} n^t r_n < \infty\}$ and $A'=\{ t\geq 0 \mid  \lim_{n\to\infty} n^{t-1} R_n < \infty\}$ are the same. Let $t\in A$, then $\lim_{n\to\infty} n^t r_n = d$, and so given $\epsilon$, there is $n_0\in\N$ such that
\begin{align*}
    \dfrac{(d-\epsilon)}{n^t} < r_n < \dfrac{(d+\epsilon)}{n^t}.
\end{align*}
for $n\geq n_0$. Hence, for $n\geq n_0$,
\begin{align*}
 \dfrac{(d-\epsilon)}{(t-1)}\leq \sum_{m=n}^\infty \dfrac{n^{t-1}(d-\epsilon)}{m^t} 
    \leq n^{t-1}R_n \leq \sum_{m=n}^\infty \dfrac{n^{t-1}(d+\epsilon)}{m^t} \leq \dfrac{n^{t-1}(d+\epsilon)}{(n+1)^{t-1}(t-1)}
\end{align*}
from which follows that $t-1\in A'$. Now, if $t\in A'$, we have that $\lim_{n\to\infty} n^{t-1} R_n = d' < \infty$, and thus, given $\epsilon>0$, there is $n_1\in\N$ such that
\begin{align*}
    \dfrac{-\epsilon+d'}{n^t}\leq \sum_{m\geq n} r_n \leq \dfrac{\epsilon+d'}{n^t}.
\end{align*}
This implies that
\begin{align*}
     \dfrac{(-\epsilon+d')}{n^t}- \dfrac{(\epsilon+d')}{(n+1)^t} \leq  r_n \leq \dfrac{(\epsilon+d')}{n^t}-\dfrac{(-\epsilon+d')}{(n+1)^t}.
\end{align*}
from which follows that $t+1\in A$, proving the assertion.
\end{proof}

\section{Infinite ergodic theory} \label{sec:infinite}

In this section we explore the consequences of the non-integrability of the function $-\log r_{a_1}$ (or equivalently, $\lambda_\mu = \infty$). Using tools of infinite ergodic theory we can prove that the diameter of the cylinders decreases faster than exponentially from a given level to the next.

\subsection{Finite Lyapunov exponent argument}
We proceed to show now one of the usual arguments used to compute Hausdorff dimensions and remark how it fails in our case.

\begin{lema}
Let $T$ be an EMR map and $\mu$ a Gibbs measure. Then for almost every $x\in I$ and every $r>0$ there exists $n$ such that 
\begin{align}
\dfrac{\log\mu(B(x,r))}{\log r} \leq \dfrac{\log \mu(I_{n}(x))}{\log|I_{n-1}(x)|}.  \label{ineq}
\end{align}
\end{lema}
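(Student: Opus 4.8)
The plan is to sandwich the scale $r$ between two consecutive cylinders around $x$: the finest cylinder still contained in $B(x,r)$ controls $\mu(B(x,r))$ from below, while the next coarser one — which sticks out of the ball — controls $r$ from above. Throughout, fix $x\notin\mathcal{O}$, so that the coding sequence $(a_n)$ of $x$ and the cylinders $I_n(x)$ are well defined; this excludes only a $\mu$-null set. Since the inequality is used only in the limit $r\to 0$, it suffices to prove it for all sufficiently small $r$ (in a way that may depend on $x$), and we assume from now on that $r<|I_1(x)|=r_{a_1}$.

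First, let $n=n(x,r)$ be the least integer with $|I_n(x)|\le r$. This exists because $\diam I_n(x)\to 0$: by hypothesis some iterate $T^N$ satisfies $|(T^N)'|\ge\alpha>1$, and together with the bounded distortion estimate $D^{-1}\le|(T^n)'(x)|\,|I_n(x)|\le D$ this forces $|I_n(x)|$ to decay at least geometrically. Because $|I_0(x)|=1>r$ and $|I_1(x)|>r$ by our choice of $r$, we have $n\ge 2$; by minimality, $|I_{n-1}(x)|>r$, and since $n-1\ge 1$ the set $I_{n-1}(x)$ is a genuine level-$(n-1)$ cylinder, so $0<|I_{n-1}(x)|\le r_{a_1}<1$.

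Next, record the two elementary comparisons. As $x\in I_n(x)$ and $\diam I_n(x)=|I_n(x)|\le r$, every $y\in I_n(x)$ satisfies $|y-x|\le r$; hence $I_n(x)\subseteq B(x,r)$ and $\mu(B(x,r))\ge\mu(I_n(x))$. By the Gibbs property $\mu(I_n(x))>0$, and $\mu(I_1(x))<1$ since $\sum_k\mu(I(k))=1$ with every summand positive; therefore, for $r$ small, $0<\mu(I_n(x))\le\mu(B(x,r))<1$, so all logarithms below are negative and $0\le-\log\mu(B(x,r))\le-\log\mu(I_n(x))$. Also $0<-\log|I_{n-1}(x)|\le-\log r$, the first inequality because $|I_{n-1}(x)|<1$ and the second because $r<|I_{n-1}(x)|$.

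Finally, combine: dividing the chain $0\le-\log\mu(B(x,r))\le-\log\mu(I_n(x))$ by $-\log r>0$, and then replacing the denominator by the smaller positive number $-\log|I_{n-1}(x)|$ (permissible since the numerator is $\ge 0$), we obtain
\[
\frac{\log\mu(B(x,r))}{\log r}=\frac{-\log\mu(B(x,r))}{-\log r}\le\frac{-\log\mu(I_n(x))}{-\log r}\le\frac{-\log\mu(I_n(x))}{-\log|I_{n-1}(x)|}=\frac{\log\mu(I_n(x))}{\log|I_{n-1}(x)|},
\]
which is \eqref{ineq} with $n=n(x,r)$. The only delicate point is the sign bookkeeping together with the reduction to small $r$, needed to guarantee $n\ge 2$ so that $\log|I_{n-1}(x)|$ is strictly negative; beyond that there is no real obstacle.
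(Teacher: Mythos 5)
Your proof is correct and takes essentially the same route as the paper's: sandwich $r$ via $|I_n(x)|\le r<|I_{n-1}(x)|$, use $I_n(x)\subseteq B(x,r)$ to bound $\mu(B(x,r))$ from below, and divide with attention to the signs of the logarithms. Your explicit restriction to small $r$ so that $n\ge 2$ and $\log|I_{n-1}(x)|<0$ is a detail the paper's proof glosses over, but it does not change the argument.
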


\begin{proof}
This is a well known argument and can be found for instance in \cite{pesin2008dimension}. Given $r>0$, there exists a unique integer $n=n(r)$ such that 
\begin{align*}
|I_{n}(x)|<r \leq |I_{n-1}(x)|
\end{align*}
so then
\begin{align*}
I_n(x)\subseteq B(x,|I_{n}(x)|)\subseteq B(x,r) \subseteq B(x,|I_{n-1}(x)|).
\end{align*}
Then
\begin{align*}
\log \mu (I_n(x))\leq \log \mu (B(x,r)),
\end{align*}
and since $\log r \leq \log |I_{n-1}(x)|$, we obtain
\begin{align*}
\dfrac{\log \mu(B(x,r))}{\log r} \leq \dfrac{\log \mu (I_n(x))}{\log |I_{n-1}(x)|} 
\end{align*}
as we wanted.
\end{proof}

In a similar way, it is possible to show that
\begin{align}
 \dfrac{\log C_1\mu(I_{n-1}(x))}{\log C_2|I_{n}(x)|} \leq \dfrac{\log \mu(B(x,r))}{\log r} \label{ineq1}
\end{align}
where $C_1,C_2$ are constants arising from assumption \ref{assumption} and Renyi's property respectively. Note that if $\lambda_\mu<\infty$, then inequalities (\ref{ineq}) and (\ref{ineq1})
, and the Ergodic Theorem would immediately imply that $s =\dim_H \mu =\dim_P \mu $. However, since in our case $\lambda_\mu=\infty$, the previous argument does not work. In fact, here lies the main difficulty of the infinite entropy and Lyapunov exponent case. The following lemma shows that the situation is as bad as it can get: for almost every point, the diameter of the cylinders decreases arbitrarily from one level to the next. 

%I DONT LIKE THIS PARAGRAPH.

\begin{prop}\label{limsup}
Let $T$ be a Gauss-like map and $\mu$ an infinite entropy Gibbs satisfying assumption \ref{assumption}. Then for almost every $x\in I$, we have that
\begin{align*}
\liminf_{n\to\infty} \dfrac{\log |I_n(x)|}{\log |I_{n-1(x)}|}=1,
\end{align*}
and
\begin{align*}
\limsup_{n\to\infty} \dfrac{\log |I_n(x)|}{\log |I_{n-1}(x)|}=\infty.
\end{align*}
\end{prop}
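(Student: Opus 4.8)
The plan is to translate both statements into assertions about Birkhoff-type sums of the non-integrable observable $-\log r_{a_1}$ and then exploit the regularity built into the Gauss-like hypothesis. By Lemma \ref{metricprop}(a), up to an additive error of size $O(n)$ we have $-\log|I_n(x)| = S_n(-\log r_{a_1})(x) = \sum_{j=1}^n (-\log r_{a_j})$, and similarly for level $n-1$. Writing $g_n = -\log r_{a_n(x)} \geq -\log r_1 > 0$, the ratio in question is, up to the linear error terms,
\begin{align*}
\dfrac{\log|I_n(x)|}{\log|I_{n-1}(x)|} = \dfrac{\sum_{j=1}^n g_j + O(n)}{\sum_{j=1}^{n-1} g_j + O(n)} = 1 + \dfrac{g_n + O(1)}{\sum_{j=1}^{n-1}g_j + O(n)}.
\end{align*}
Since $\lambda_\mu = \infty$, Lemma \ref{infinitelyap} applied to $-\log r_{a_1}$ gives $\frac{1}{n}\sum_{j=1}^{n-1} g_j \to \infty$, so in particular $\sum_{j=1}^{n-1} g_j + O(n) \to \infty$ and dominates the $O(n)$ correction.

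For the $\liminf = 1$ statement, it suffices to show $g_n / \sum_{j=1}^{n-1} g_j \to 0$ along a subsequence, i.e. that $g_n$ is ``usually small'' relative to the growing partial sums. Because $\{r_n\}$ is non-increasing and decays only polynomially (asymptotic $\alpha$), the value $g_n = -\log r_{a_n}$ is comparable to $\alpha \log a_n$; and for $\mu$-a.e. $x$ the digit $a_n(x)$ equals $1$ (so $g_n$ is bounded) for infinitely many $n$ — indeed with positive frequency $p_1$, by Birkhoff. Along those times $n$ the numerator is $O(1)$ while the denominator tends to infinity, giving ratio $\to 1$; since the ratio is always $\geq 1 - o(1)$ this forces $\liminf = 1$.

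For the $\limsup = \infty$ statement, I want times $n$ at which $g_n$ is enormous compared to $\sum_{j=1}^{n-1} g_j$. This is exactly a statement about the dominance of the maximal term in a sum of a non-integrable ergodic process — the trimmed-sum / Aaronson-type phenomenon the introduction alludes to. Concretely: by the Borel–Cantelli-type argument for the ergodic sequence $(a_n)$, for $\mu$-a.e. $x$ and any prescribed rate, the digit $a_n(x)$ exceeds any fixed threshold infinitely often, and more importantly one can arrange $-\log r_{a_n}$ to exceed $M \cdot \sum_{j=1}^{n-1}(-\log r_{a_j})$ for infinitely many $n$, for every $M$. The cleanest route is: fix $M$; the events $E_n^M = \{ x : g_n(x) > M \sum_{j<n} g_j(x)\}$ have the property that, conditioned on the first $n-1$ digits (which fix $\sum_{j<n} g_j =: \Sigma_{n-1}$, a finite number), $\mu(E_n^M \mid a_1,\dots,a_{n-1})$ is comparable (by the Gibbs property and $p_k \asymp r_k^s$ with $r_k$ polynomial) to $\sum_{k : -\log r_k > M\Sigma_{n-1}} p_k$, which is a fixed positive number $\delta(M\Sigma_{n-1}) > 0$. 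Since $\Sigma_{n-1}$ grows only linearly on average (Lemma \ref{infinitelyap} gives superlinear growth but along a subsequence one controls it) — more robustly, using that $\Sigma_{n-1} = o(\text{anything that tail sums decay like})$ fails, so one instead picks a sparse sequence of times $n_k$ adapted to the (a.e. finite) values $\Sigma_{n_k - 1}$ — a conditional Borel–Cantelli argument (the divergent part, via Lévy's extension for the second Borel–Cantelli lemma applied to the filtration generated by the digits) shows infinitely many $E_n^M$ occur a.s. Taking $M \to \infty$ through a countable sequence yields $\limsup \frac{g_n}{\Sigma_{n-1}} = \infty$, hence $\limsup \frac{\log|I_n|}{\log|I_{n-1}|} = \infty$.

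The main obstacle is the $\limsup$ half: one must genuinely control the conditional probabilities $\mu(E_n^M \mid \F_{n-1})$ and verify their sum diverges a.s. This is where Assumption \ref{assumption} ($q_{n+1}/q_n, r_{n+1}/r_n$ bounded above and below) and the polynomial decay are essential — they guarantee the tail sums $\sum_{k \geq N} p_k$ decay polynomially in $N$ (Lemma \ref{taildecay} type estimate) and are \emph{not} summable too fast, so that $\sum_n \delta(M\Sigma_{n-1})$ diverges even though $\Sigma_{n-1} \to \infty$; the boundedness of ratios prevents pathological sparsity in $\{r_n\}$ that could make the relevant tail sum vanish too quickly. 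I would carry out the argument in this order: (1) reduce both limits to statements about $g_n$ vs. $\Sigma_{n-1}$ via Lemma \ref{metricprop}(a); (2) dispose of $\liminf = 1$ using the positive frequency of the digit $1$; (3) set up the filtration and conditional events for $\limsup = \infty$; (4) estimate $\mu(E_n^M \mid \F_{n-1})$ from below using Gibbs + polynomial decay; (5) apply the conditional (Lévy) Borel–Cantelli lemma and let $M \to \infty$.
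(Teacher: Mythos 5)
Your reduction via Lemma \ref{metricprop}(a) and your treatment of the $\liminf$ (the digit $1$ occurs with frequency $p_1>0$ by Birkhoff, so along those times the numerator increment is bounded while $S_{n-1}\to\infty$, and the ratio is always $\geq 1$ by nestedness) agree with the paper, which dismisses that half as an immediate consequence of recurrence. The $\limsup$ half, however, has a genuine gap at your steps (4)--(5). The L\'evy/conditional Borel--Cantelli argument requires $\sum_n \mu(E_n^M \mid \F_{n-1}) = \infty$ almost surely, and you never establish this; positivity of each term is not enough. Note first that your growth claim is off: by Lemma \ref{infinitelyap}, $\Sigma_{n-1}/n \to \infty$ a.e., so the threshold $M\Sigma_{n-1}$ is superlinear in $n$. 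The conditional probability is (by Gibbs) comparable to the tail $\F(M\Sigma_{n-1})=\mu(-\log r_{a_1}\geq M\Sigma_{n-1})$, i.e.\ to $\sum_{k\gtrsim \exp(M\Sigma_{n-1}/\alpha)} p_k$, and the only lower bound the standing hypotheses give at this level of generality is $C_\delta\exp(-\delta M\Sigma_{n-1}/\alpha)$ for each fixed $\delta>0$ (from $p_k\geq Ck^{-1-\delta}$); since $\Sigma_{n-1}$ grows superlinearly, these lower bounds are summable in $n$, so ``Gibbs plus polynomial decay'' alone does not yield the required divergence. Worse, there is a feedback problem your sketch gestures at but does not resolve: the a.e.\ growth rate of $\Sigma_{n-1}$ is itself driven by exactly the rare huge digits whose occurrence you are trying to force, so one needs quantitative control of the tail $\F$ against the true a.e.\ scale of $S_{n-1}$ --- which is precisely the trimmed-sum analysis, not a soft consequence of the assumptions.

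For comparison, the paper also reduces the claim to $\limsup_n X_n/S_{n-1}=\infty$ with $X_n=-\log r_{a_n}$, but then avoids any Borel--Cantelli estimate. It verifies the Aaronson--Nakada hypothesis for Gauss-like maps (Lemma \ref{trimsums} together with the polynomial-decay lemma), so the trimmed sums satisfy $S'_n/b_n\to 1$ a.e.\ for some sequence $b_n$ (Theorem \ref{aarnak}); Aaronson's dichotomy (Theorem \ref{aar}) then forces $\limsup_n S_n/b_n=\infty$ a.e., because the alternative $\liminf_n S_n/b_n=0$ is incompatible with $S_n\geq S'_n\sim b_n$. Choosing $n_1$ with $S_{n_1}>(2M+1)b_{n_1}$ while $S'_{n_1}<2b_{n_1}$, the maximal term $X_j$ among the first $n_1$ satisfies $S_{j-1}\leq S'_{n_1}<2b_{n_1}<X_j/M$, giving $X_j/S_{j-1}>M$. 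If you wish to keep your route, you must prove an estimate of the type $\sum_n \F\bigl(M S_{n-1}(x)\bigr)=\infty$ for a.e.\ $x$, which in practice requires relating $\F$ to the normalizing sequence $b_n$ --- i.e.\ the same infinite-ergodic-theory input the paper imports; as written, the key divergence is asserted rather than proved.
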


The proof of the first equality is an immediate consequence of recurrence. We postpone the proof of the second equality. We will return to this issue once we set up the appropriate tools to prove this result.

\begin{coro}
For almost every $x\in I$, we have that $\underline{d}(x)\leq s$ and hence $\dim_H \mu\leq s$.
\end{coro}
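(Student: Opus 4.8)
The plan is to combine the first inequality of Proposition \ref{limsup} with the symbolic-exactness result of Theorem \ref{markovexact} and inequality (\ref{ineq}). First I would recall that for almost every $x$, inequality (\ref{ineq}) gives, for every $r>0$, an integer $n=n(r)$ with
\begin{align*}
\frac{\log \mu(B(x,r))}{\log r} \le \frac{\log \mu(I_n(x))}{\log |I_{n-1}(x)|}.
\end{align*}
As $r\to 0$ we have $n(r)\to\infty$, so
\begin{align*}
\underline{d}(x) = \liminf_{r\to 0}\frac{\log \mu(B(x,r))}{\log r} \le \liminf_{n\to\infty} \frac{\log \mu(I_n(x))}{\log |I_{n-1}(x)|}.
\end{align*}

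Next I would rewrite the right-hand side to disentangle the mismatch of levels $n$ and $n-1$:
\begin{align*}
\frac{\log \mu(I_n(x))}{\log |I_{n-1}(x)|}
= \frac{\log \mu(I_n(x))}{\log |I_n(x)|}\cdot \frac{\log |I_n(x)|}{\log |I_{n-1}(x)|}.
\end{align*}
By Theorem \ref{markovexact}, for $\mu$-a.e. $x$ the first factor converges to $s$ as $n\to\infty$. The second factor is exactly the quantity studied in Proposition \ref{limsup}; since it is $\ge 1$ along a subsequence and has liminf equal to $1$, one can choose a subsequence $n_k\to\infty$ along which $\log|I_{n_k}(x)|/\log|I_{n_k-1}(x)| \to 1$. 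Along that subsequence the product tends to $s\cdot 1 = s$, so the liminf over all $n$ is at most $s$, giving $\underline{d}(x)\le s$ for $\mu$-a.e. $x$. The conclusion $\dim_H\mu \le s$ then follows from part (1) (or part (5)) of the local Mass Distribution Principle.

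The only mild subtlety — and the step I would be most careful with — is the sign bookkeeping: since $\log|I_n(x)|<0$ and $\log\mu(I_n(x))<0$, all three ratios above are positive, and $\log|I_n(x)|/\log|I_{n-1}(x)| \ge 1$ precisely because $|I_n(x)|\le |I_{n-1}(x)|$ forces $|\log|I_n(x)||\ge |\log|I_{n-1}(x)||$; one should also note $|I_{n-1}(x)|<1$ eventually so the denominators are bounded away from $0$, making the product of limits along the chosen subsequence legitimate. No genuine obstacle remains here, since the hard analytic content is entirely contained in Proposition \ref{limsup} and Theorem \ref{markovexact}, both of which we may invoke.
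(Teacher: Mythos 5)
Your argument is correct and is essentially the intended one: the corollary follows in the paper exactly by combining inequality \eqref{ineq}, the symbolic exactness of Theorem \ref{markovexact}, and the $\liminf=1$ half of Proposition \ref{limsup}, via the same factorization of $\log\mu(I_n(x))/\log|I_{n-1}(x)|$ and passage to a subsequence where the level-$n$/level-$(n-1)$ ratio tends to $1$. One minor slip: the final deduction $\dim_H\mu\leq s$ should cite part (2) or part (5) of the Mass Distribution Principle proposition, not part (1), which is the lower-bound direction.
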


The main tool that we will use to prove proposition \ref{limsup} are results about the pointwise behavior of trimmed sums.

\subsection{Trimmed convergence}
Note that the sequence $\{X_n=-\log r_1\circ T^{n-1} \} $ can be seen as a positive ergodic stationary process on $[0,1]$ with respect to $\mu$, an infinite entropy Gibbs measure satisfying assumption \ref{assumption}. The distribution function of $X_1$ is $\F(t)=\mu(X_1\geq t)$, and it can be seen that $\E(X_1)=\lambda_\mu$.  As we saw in Lemma \ref{infinitelyap}, the Ergodic Theorem fails to provide non-trivial information. This result was vastly generalized by Robbins and Chow for i.i.d. random variables in \cite{chow1961sums} and in the ergodic stationary case by Aaronson in \cite{aaronson1977ergodic} who proved the following theorem:
\begin{teo}\cite{aaronson1977ergodic}\label{aar}
Let $f\colon [0,1]\to\R$ be a non-negative measurable function. If $\E(f)=\infty$ then for any sequence $\{b_n\}$ of positive numbers, either
\begin{align*}
\limsup_{n\to\infty} \dfrac{1}{b_n}\sum_{k=0}^{n-1} f\circ T^k =\infty \quad \text{a.e.}
\end{align*}
or
\begin{align*}
\lim_{n\to\infty}\dfrac{1}{b_n}\sum_{k=0}^{n-1} f\circ T^k=0 \quad \text{a.e.}.
\end{align*}
\end{teo}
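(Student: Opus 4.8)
The plan is to derive this from the i.i.d.\ precursor of Chow and Robbins \cite{chow1961sums} via Aaronson's stationary extension \cite{aaronson1977ergodic} — indeed, since the statement is \emph{verbatim} the main theorem of \cite{aaronson1977ergodic}, in the body of the paper I would simply quote it; what follows is a sketch of why it holds. Write $S_n f=\sum_{k=0}^{n-1}f\circ T^k$ and $\phi_n=\frac{1}{b_n}S_n f\ge 0$; the dichotomy says $\limsup_n\phi_n=\infty$ a.e.\ or $\phi_n\to 0$ a.e. First I would dispose of the degenerate normalisations. If $\liminf_n b_n/n<\infty$, choose a subsequence with $b_{n_j}\le M n_j$; since $\E(f)=\infty$, Lemma \ref{infinitelyap} gives $\frac1n S_n f\to\infty$ a.e., hence $\phi_{n_j}\ge\frac1M\cdot\frac1{n_j}S_{n_j}f\to\infty$ a.e.\ and the first alternative holds. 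So one may assume $b_n/n\to\infty$, in particular $b_n\to\infty$; after a standard regularisation (replacing $\{b_n\}$ by an equivalent non-decreasing majorant, which alters neither alternative) one may further assume $b_{n+1}/b_n\to 1$.

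Next I would set $L(x)=\limsup_n\phi_n(x)\in[0,\infty]$ and prove it is $T$-invariant. Using $S_n f(Tx)=S_{n+1}f(x)-f(x)$, the facts $f<\infty$ a.e.\ and $b_n\to\infty$ (so $f(x)/b_n\to0$), and $b_{n+1}/b_n\to1$, one gets $L(Tx)=\limsup_n\frac{S_{n+1}f(x)}{b_{n+1}}\cdot\frac{b_{n+1}}{b_n}=L(x)$ a.e.; ergodicity then forces $L\equiv c$ for a constant $c\in[0,\infty]$. If $c=\infty$ the first alternative holds; if $c=0$ then, since $f\ge0$, $0\le\phi_n\le\sup_{m\ge n}\phi_m\to0$ a.e., giving the second alternative.

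The remaining step — and the one I expect to be the real obstacle — is to rule out $c\in(0,\infty)$. Assuming $\limsup_n\phi_n=c$ a.e.\ one has, for a.e.\ $x$, both $S_n f(x)\le 2cb_n$ for all large $n$ and $S_n f(x)>\frac{c}{2}b_n$ for infinitely many $n$. The contradiction is extracted from a maximal/Hopf-type inequality applied to the maximal function $\sup_{m\ge n}\phi_m$, combined with the Birkhoff theorem for the transformation induced on a superlevel set of that maximal function: non-integrability of $f$ (equivalently the super-linear growth $\frac1n S_n f\to\infty$ from Lemma \ref{infinitelyap}) prevents the partial sums from staying trapped below a fixed multiple of $b_n$, forcing the fluctuations of $\phi_n$ to be unbounded and hence $L\equiv\infty$, contradicting $c<\infty$. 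Making this last argument precise is exactly the technical heart of \cite{aaronson1977ergodic}; accordingly the paper invokes the theorem in the stated form rather than reproducing that analysis.
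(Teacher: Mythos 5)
The paper does not prove this statement at all: it is imported verbatim from \cite{aaronson1977ergodic} (with \cite{chow1961sums} as the i.i.d.\ antecedent), which is exactly what you propose to do, so your approach coincides with the paper's. One caution about your accompanying sketch: the reduction to $b_{n+1}/b_n\to 1$ by passing to an \emph{equivalent} non-decreasing majorant is not available for an arbitrary positive sequence $\{b_n\}$ (for which the theorem is asserted), and without that regularity the $T$-invariance, hence constancy, of $\limsup_n \frac{1}{b_n}\sum_{k=0}^{n-1}f\circ T^k$ breaks down, so the sketch should not be mistaken for a proof --- but since both you and the paper ultimately invoke Aaronson's theorem as a black box, this does not affect the correctness of your proposal.
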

It is possible to prove that the lack of convergence in the previous theorem is due to a finite number of terms which are not comparable in size to the rest of the terms of the sum. This was proved in the i.i.d. case by Mori in \cite{mori1976strong},\cite{mori1977stability} and in the stationary ergodic case by Aaronson and Nakada in \cite{aaronson2003trimmed}. We formulate the result by Aaronson and Nakada in a setting appropriate for our purposes.

We denote the ergodic sum of a function $f$ by $S_n(f)$ and define $S'_n(f)=S_n(f)-\max\{f,\hdots,f\circ T^{n-1}\}$. We refer to $S'_n$ as the \emph{trimmed ergodic sum} of $f$.

\begin{teo}\cite{aaronson2003trimmed} \label{aarnak}
Let $(X_1,X_2,\hdots)$ be a non-negative, ergodic stationary process with $L(t)=\E(X\wedge t)$, and set $\varepsilon(t):=t(\log L)'(t)$. Suppose that the process is \emph{continued fraction mixing} with exponential rate (see \cite{aaronson2003trimmed}), and that
\begin{align*}
    \sum_{n=1}^\infty \dfrac{\varepsilon^2(n)}{n}<\infty.
\end{align*}
Then, there exists a sequence $\{b_n\}$ such that
\begin{align*}
    \lim_{n\to\infty} \dfrac{S'_n}{b_n} = 1
\end{align*}
almost surely, and we say that $\{ X_n\}$ has trimmed convergence. 
\end{teo}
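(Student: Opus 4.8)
This is a result of Aaronson and Nakada \cite{aaronson2003trimmed}, so rather than reprove it I would only sketch the classical \emph{truncation method} of Mori, adapted to the stationary setting by means of the continued fraction mixing hypothesis. First I would record the elementary properties of $L$: since $\E(X)=\infty$ we have $L(t)\uparrow\infty$, $L$ is concave with $L'(t)=\bar{F}(t):=\P(X>t)$, and hence $\varepsilon(t)=tL'(t)/L(t)=t\bar{F}(t)/L(t)$; the summability hypothesis forces $\varepsilon(t)\to0$, equivalently $L$ is slowly varying. Next I would introduce a truncation level $\gamma_n$, chosen so that $n\bar{F}(\gamma_n)$ grows very slowly, at a rate balanced against $\varepsilon$ and a logarithmic correction, so that for large $n$ the terms among $X_1,\dots,X_n$ exceeding $\gamma_n$ are, apart from the largest one, cumulatively negligible. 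I would then set $X_k^{(n)}=X_k\wedge\gamma_n$, $S_n^{(n)}=\sum_{k=1}^n X_k^{(n)}$, and take the normalising sequence $b_n:=\E S_n^{(n)}=nL(\gamma_n)$.

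The core of the argument is a second moment estimate for $S_n^{(n)}$. For a single term, $\mathrm{Var}(X\wedge\gamma_n)\le\E\big((X\wedge\gamma_n)^2\big)=\int_0^{\gamma_n}2s\,\bar{F}(s)\,ds$, and exploiting the slow variation of $\bar{F}$ one improves the crude bound $2\gamma_nL(\gamma_n)$ to one of order $\gamma_nL(\gamma_n)\varepsilon(\gamma_n)$; the continued fraction mixing with exponential rate is then used to show that the covariances $\mathrm{Cov}(X_i^{(n)},X_j^{(n)})$ contribute only a lower order term, so that $\mathrm{Var}(S_n^{(n)})$ is comparable to $n$ times the single-term variance. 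Dividing by $b_n^2=n^2L(\gamma_n)^2$ and using the definition of $\gamma_n$, one obtains a bound of the shape $\mathrm{Var}(S_n^{(n)})/b_n^2\lesssim\varepsilon(\gamma_n)^2$ (up to slowly varying corrections), and this is precisely where the assumption $\sum_n\varepsilon^2(n)/n<\infty$ is the right one: it makes the resulting series convergent along a geometric subsequence $n_j=\lfloor\rho^j\rfloor$. Chebyshev's inequality and Borel--Cantelli then give $S_{n_j}^{(n_j)}/b_{n_j}\to1$ a.s., and the monotonicity of $S_n^{(n)}$ in $n$ together with the slow variation of $b_n$ fills the gaps, yielding $S_n^{(n)}/b_n\to1$ a.s.

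It remains to replace the truncated sum by the trimmed one. Writing $S_n^{(n)}-S'_n$ in terms of the order statistics of $X_1,\dots,X_n$, one sees that the difference is controlled by $\gamma_n$ together with the sum of all exceedances of $\gamma_n$ \emph{except the largest}; the choice of $\gamma_n$ and a Borel--Cantelli argument along the subsequence $\{n_j\}$ show this to be $o(b_n)$ almost surely (and $\gamma_n=o(b_n)$ since $\varepsilon(\gamma_n)\to0$), whence $S'_n/b_n\to1$ a.s. The main obstacle is the interplay in the second step: one must simultaneously extract the sharp power $\varepsilon^2$ from the truncated second moment and control the dependence so that the stationary sum behaves like a sum of independent variables — and the latter is exactly why continued fraction mixing with an exponential rate, rather than mere ergodicity, is indispensable, in contrast with Aaronson's dichotomy (Theorem \ref{aar}), whose proof uses no mixing at all. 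A further subtlety is that the sum of the non-maximal exceedances has infinite expectation for each fixed $n$, so the estimate $o(b_n)$ must be obtained almost surely rather than in mean.
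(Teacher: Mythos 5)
The paper does not prove this statement: Theorem \ref{aarnak} is quoted directly from Aaronson and Nakada \cite{aaronson2003trimmed} and is used as a black box, so there is no internal proof to compare your sketch against, and your decision to cite the result rather than reprove it matches the paper's treatment exactly. Your outline of the underlying argument (Mori-style truncation, a refined second-moment bound of order $\varepsilon$ for the truncated variables, CF-mixing with exponential rate to make covariances lower order, Chebyshev plus Borel--Cantelli along a geometric subsequence, and finally comparison of the truncated sum with the trimmed sum) is a fair description of how the original proof goes and of where each hypothesis enters. If you ever expand the sketch, the only points to flag are minor: $\sum_n\varepsilon^2(n)/n<\infty$ does not force $\varepsilon(t)\to 0$ pointwise, only in a Ces\`aro/density sense, which is what the subsequence argument actually uses; and in \cite{aaronson2003trimmed} the normalising constants are essentially defined implicitly by a relation of the form $b_n\sim nL(b_n)$ rather than through an auxiliary truncation level $\gamma_n$, though the two formulations are interchangeable under the slow-variation property you invoke. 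Neither point affects the structure of your outline.
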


As remarked in \cite{aaronson2003trimmed}, any Gibbs-Markov map is CF-mixing with exponential rate. For our particular random variables, the series in the previous theorem can be explicitly expressed in terms of the sequences $\{ p_n\}$ and $\{ r_n\}$:
\begin{lema} \label{trimsums}
Suppose that
\begin{align*}
\sum_{n=1}^\infty (\log r_n)^2 (p_n^2+2p_np_{n+1}) <\infty.
\end{align*}
Then the sequence $\{X_n=-\log r_1\circ T^{n-1} \}$ has trimmed convergence.
\end{lema}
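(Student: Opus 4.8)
The plan is to identify the quantity $\varepsilon(t) = t(\log L)'(t)$ appearing in Theorem \ref{aarnak} explicitly in terms of the sequence $\{r_n\}$ and then translate the convergence of $\sum_n \varepsilon^2(n)/n$ into the stated series condition. First I would recall that for our process $X_n = -\log r_1 \circ T^{n-1}$ the truncated expectation is $L(t) = \E(X_1 \wedge t) = \int_0^t \mu(X_1 > u)\, du + t\,\mu(X_1 > t)$, so that $L'(t) = \mu(X_1 > t) = \F(t)$ and hence $\varepsilon(t) = t\F(t)/L(t)$. Since $\E(X_1) = \lambda_\mu = \infty$, we have $L(t) \to \infty$, so the subtlety is only in understanding the numerator $t\F(t)$. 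Because $X_1$ takes the value $-\log r_n$ on the cylinder $I(n)$ (up to the bounded quantity controlled by $\var_1(\log\varphi)$ and the distortion constant $L$, which only perturb things by a bounded additive amount and hence do not affect the asymptotics), the distribution function $\F(t) = \mu(X_1 \ge t)$ is, for $t$ near $-\log r_n$, comparable to $\sum_{m \ge n} p_m = P_n$. Thus $\varepsilon(n)$ is, up to constants and the change of variable $t \leftrightarrow n$, of the order $n \cdot P_{k(n)}/L(n)$ where $k(n)$ is the index with $-\log r_{k(n)} \approx n$.

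The second step is the change of variables turning the sum $\sum_n \varepsilon^2(n)/n$ over the continuous truncation level $n$ into a sum over the digit index. Using that $r_n$ has polynomial decay and that $r_{n+1}/r_n$ is bounded away from $0$ and $\infty$ (the Gauss-like hypotheses), the map $n \mapsto -\log r_n$ is comparable to $n \mapsto c\log n$, so a level-interval $[-\log r_{n+1}, -\log r_n)$ has length comparable to $\log(r_n/r_{n+1})$, which is uniformly bounded; summing $\varepsilon^2(t)/t\, dt$ over such an interval and re-indexing by $n$, each term contributes an amount comparable to $(\,n P_n / L(-\log r_n)\,)^2 \cdot \frac{1}{-\log r_n} \cdot (\text{bounded})$. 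Here one also uses assumption \ref{assumption} (via the Gibbs property, applied to $p_n$) to replace $P_n = \sum_{m\ge n}p_m$ by a quantity comparable to $n p_n$, since $p_{n+1}/p_n$ bounded away from $1$ forces the tail to be dominated by its first term; likewise $L(-\log r_n)$ is comparable to $\lambda_\mu$-truncated-at-level-$n$, i.e. to $\sum_{k \le n} p_k(-\log r_k)$, which grows and can be absorbed. After these comparisons, the condition $\sum \varepsilon^2(n)/n < \infty$ reduces to $\sum_n (\log r_n)^2 p_n^2 < \infty$, and the cross-term $2p_np_{n+1}$ enters precisely because the jump of $\F$ across a single level is naturally bundled with its neighbour when one writes $\F(t)^2$ and expands near the atom — more carefully, $\F$ decreases by $p_n$ at the level $-\log r_n$, and summing squared increments over an interval straddling two consecutive levels produces $p_n^2 + 2p_np_{n+1}$ rather than $p_n^2$ alone. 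Then Theorem \ref{aarnak} applies (the CF-mixing with exponential rate is automatic for Gibbs–Markov maps, as remarked), giving trimmed convergence.

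The main obstacle I anticipate is making the comparison between the continuous quantity $\varepsilon(t)$ and its discrete analogue fully rigorous, in particular controlling $\F(t)$ and $L(t)$ uniformly for $t$ in the "gap" between two consecutive atoms $-\log r_{n+1}$ and $-\log r_n$ — one must check that $\F$ and $L$ do not oscillate wildly there, which is where the hypothesis $K \le r_{n+1}/r_n \le K'$ (bounded gaps) and the Gibbs-induced analogue for $p_n$ are essential, and where the additive bounded perturbation coming from $\var_1(\log\varphi)$ and the distortion constant must be shown to be harmless. A secondary technical point is justifying the replacement of the tail sum $\sum_{m\ge n} p_m$ by $n p_n$ up to multiplicative constants: this is a geometric-series comparison using $p_{m+1} \le K' p_m$ in one direction (so the tail is $\le \frac{p_n}{1-K'}$ if $K'<1$, or one argues along dyadic blocks otherwise) and $p_{m+1}\ge K p_m$ in the other, and one must handle the borderline behaviour carefully so that the exponent in $(\log r_n)^2 p_n^2$ comes out exactly as stated rather than with an extra logarithmic factor.
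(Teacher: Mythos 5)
Your first step coincides with the paper's: for $X_1=-\log r_{a_1}$ one has $\varepsilon(t)=t\F(t)/L(t)$, the distribution function $\F$ is a step function equal to the tail $P_{n+1}=\sum_{m\ge n+1}p_m$ on $[-\log r_n,-\log r_{n+1})$ (exactly, not merely up to $\var_1(\log\varphi)$ and distortion constants: the observable is constant on first-level cylinders), and since $L(t)\to\infty$ the denominator can simply be discarded. But from there the paper finishes by a short exact computation: $\sum_n n\F^2(n)$ is compared with $\int_0^\infty x\,\F(x)^2\,\mathrm{d}x$, the integral is evaluated over the intervals of constancy of $\F$, and an Abel summation with $a_n=(\log r_n)^2$ and $b_n=\big(\sum_{m\ge n}p_m\big)^2$ converts it into $\sum_n (\log r_n)^2\,(b_n-b_{n+1})$, which is then matched against the hypothesis. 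No polynomial decay, no Gauss-like bounds, and no comparison of $P_n$ with $np_n$ enter at any point.

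Your proposal never carries out this quantitative reduction, and the substitutes you offer do not hold under the lemma's hypotheses, so there is a genuine gap. Concretely: (i) the lemma assumes only the displayed series condition together with assumption \ref{assumption}; it does not assume polynomial decay of $\{r_n\}$, yet your change of variables and the identification $-\log r_{k(n)}\approx n$ rely on it — and even in the Gauss-like case $-\log r_n\asymp\log n$ while the level gaps $\log(r_n/r_{n+1})$ shrink like $1/n$, so your "each term contributes $(nP_n/L)^2\cdot(-\log r_n)^{-1}\cdot O(1)$" conflates the truncation level $t$ with the digit index $n$; (ii) the claim $P_n\asymp np_n$ is justified by a geometric-series argument requiring $p_{m+1}\le K'p_m$ with $K'<1$, which assumption \ref{assumption} does not provide, and such an argument would in any case yield $P_n\asymp p_n$, contradicting the very comparison you want to use; (iii) the provenance of the cross term $2p_np_{n+1}$ is asserted rather than derived: the increment produced by the correct computation is $P_n^2-P_{n+1}^2=p_n^2+2p_nP_{n+1}$, involving the full tail $P_{n+1}$ and not $p_{n+1}$, and relating that quantity to the stated series is exactly the step that requires an argument rather than a remark. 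Until the chain of inequalities from $\sum_n(\log r_n)^2(p_n^2+2p_np_{n+1})<\infty$ to $\sum_n\varepsilon^2(n)/n<\infty$ is actually written down, the proposal does not prove the lemma; the paper's sum-versus-integral comparison followed by summation by parts is precisely the missing content.
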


\begin{proof}
We show that if 
\begin{align*}
\sum_{n=1}^\infty (\log r_n)^2 (p_n^2+2p_np_{n+1}) <\infty,
\end{align*}
then 
\begin{align*}
    \sum_{n=1}^\infty \dfrac{\varepsilon^2(n)}{n}<\infty.
\end{align*}
Let $\F(t)=\mu(X\geq t)$ and note that 
\begin{align*}
    (\log L)'(t) = \dfrac{\F(t)}{L^2(t)},
\end{align*}
and so
\begin{align*}
    \sum_{n=1}^\infty \dfrac{\varepsilon^2(n)}{n} =  \sum_{n=1}^\infty \dfrac{n\F^2(n)}{L^2(n)} \leq  \sum_{n=1}^\infty n\F^2(n).
\end{align*}
We compare the above sum to the corresponding integral. We can then see that if $x\in[0,-\log r_1)$ then $\F(x)=1$, while if $x\in[-\log r_n,-\log r_{n+1})$ for $n\geq 1$ then
\begin{align*}
\F(x)=\sum_{k=n+1}^\infty p_k,
\end{align*}
so then the integral is
\begin{align*}
\int_0^\infty x\ (\F(x))^2  \mathrm{d}x &= \int_0^{-\log r_1} x \left( \sum_{k=1}^\infty p_k \right)^2 \mathrm{d}x  +  \sum_{n=1}^\infty\left(    \int_{-\log r_n}^{-\log r_{n+1}} x\left( \sum_{k=n}^\infty  p_k   \right)^2  \mathrm{d}x \right) \\
&= \dfrac{(\log r_1)^2}{2} + \dfrac{1}{2} \sum_{n=1}^\infty\left(    \int_{-\log r_n}^{-\log r_{n+1}} x\left(\sum_{i,j=n}^\infty  p_ip_j  \right)  \mathrm{d}x \right) \\
&= \dfrac{(\log r_1)^2}{2} + \dfrac{1}{2}\sum_{n=1}^\infty\left( (\log r_{n-1})^2-(\log r_n)^2 \right) \left( \sum_{i,j=n}^\infty  p_ip_j  \right).
\end{align*}
Call now
\begin{align*}
a_n = (\log r_n)^2 \quad , \quad b_n = \sum_{i,j=n}^\infty  p_ip_j .
\end{align*}
Then, the above expression has the form
\begin{align*}
\sum_{n=1}^\infty (a_{n+1}-a_n)b_n
\end{align*}
which can be written as
\begin{align*}
-a_1b_1 +\sum_{n=1}^\infty a_{n+1}(b_{n}-b_{n+1}).
\end{align*}
Note that
\begin{align*}
b_{n+1}-b_n &= 2p_np_{n+1}+p_n^2 \\
b_1&=1.
\end{align*}
With this, the integral becomes 
\begin{align*}
\int_0^\infty x\ (\F(x))^2  \mathrm{d}x &=  \dfrac{(\log r_1)^2}{2} + \dfrac{1}{2}\left( -(\log r_1)^2 + \sum_{n=1}^\infty (\log r_n)^2 (p_n^2+2p_np_{n+1}) \right) \\
&=\sum_{n=1}^\infty (\log r_n)^2 (p_n^2+2p_np_{n+1}) 
\end{align*}
as we wanted to prove.
\end{proof}

We show now that the trimmed convergence condition is satisfied by systems for which $\{r_n\}$ decays polynomially or slower.
\begin{lema}
Suppose that 
\begin{align*}
	\lim_{n\to\infty}	\dfrac{1}{n}(\log r_n)^2 = c\in [0,\infty).
\end{align*}
Then the sequence $\{X_n=-\log r_n \}$ has trimmed convergence.
\end{lema}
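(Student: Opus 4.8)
The plan is to feed the hypothesis into Lemma \ref{trimsums}, which reduces the whole statement to an elementary summability property of $\{p_n\}$ and $\{r_n\}$, and then to extract that property from the decay ratio together with Proposition \ref{convexp}. First I would apply Lemma \ref{trimsums}: it suffices to prove $\sum_n (\log r_n)^2(p_n^2+2p_np_{n+1})<\infty$. Using the bound $p_{n+1}\le K'p_n$ from Assumption \ref{assumption} we get $p_n^2+2p_np_{n+1}\le(1+2K')p_n^2$, so everything comes down to showing
\begin{align*}
\sum_{n=1}^\infty (\log r_n)^2\,p_n^2<\infty.
\end{align*}

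The main step is to control this series, and I would first treat the case in which the decay ratio $s$ is positive — which is automatic in the Gauss-like (polynomially decaying) setting, since there $s_\infty>0$. By Proposition \ref{convexp}, $s=s_\infty$. Fix $\epsilon\in(0,s/3)$; from the definition of the decay ratio and $\log r_n<0$ one gets $p_n\le r_n^{\,s-\epsilon}$ for all large $n$, hence $(\log r_n)^2p_n^2\le(\log r_n)^2 r_n^{\,2(s-\epsilon)}$. Now split $2(s-\epsilon)=(s+\epsilon)+(s-3\epsilon)$: the factor $(\log r_n)^2 r_n^{\,s-3\epsilon}$ tends to $0$ (as $s-3\epsilon>0$ and $r_n\to0$), so it is bounded by some $M$, while $\sum_n r_n^{\,s+\epsilon}<\infty$ because $s+\epsilon>s_\infty$; therefore $\sum_n(\log r_n)^2 p_n^2\le M\sum_n r_n^{\,s+\epsilon}$ up to finitely many terms, hence is finite. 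To also cover the degenerate case $s=0$ (possible only when $r_n$ decays faster than polynomially, e.g.\ with $c>0$) is where the hypothesis on $c$ actually enters: $\tfrac1n(\log r_n)^2\to c<\infty$ gives a constant $C_0$ with $(\log r_n)^2\le C_0 n$ for all $n$, so it suffices to show $\sum_n n\,p_n^2<\infty$, which follows from the (eventual) monotonicity of $\{p_n\}$ via $\sum_{m\ge k}p_m^2\le p_k\sum_{m\ge k}p_m$ and
\begin{align*}
\sum_{n}n\,p_n^2=\sum_{k}\sum_{m\ge k}p_m^2\le\sum_{k}p_k\Big(\sum_{m\ge k}p_m\Big)\le\Big(\sum_k p_k\Big)^2=1.
\end{align*}

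The hard part is precisely this last summability, i.e.\ pinning down the large-$n$ behaviour of $\{p_n\}$. When $s>0$ the splitting argument above is self-contained and uses only Assumption \ref{assumption}, Proposition \ref{convexp}, and the existence of the decay ratio; the delicate point is the case $s=0$, where the two-sided ratio bound of Assumption \ref{assumption} by itself does not exclude sparsely placed spikes in $\{p_n\}$ that could make $\sum_n n\,p_n^2$ diverge, so one genuinely needs $\{p_n\}$ to be comparable to a non-increasing sequence for the telescoping estimate $\sum_n n\,p_n^2\le(\sum_n p_n)^2$ to close. Once $\sum_n(\log r_n)^2 p_n^2<\infty$ is in hand, trimmed convergence of $\{X_n=-\log r_n\}$ follows directly from Lemma \ref{trimsums}.
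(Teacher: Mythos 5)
Your proposal is correct and follows essentially the same route as the paper: reduce via Lemma \ref{trimsums} and the comparability of $p_n,p_{n+1}$ to $\sum_n(\log r_n)^2p_n^2<\infty$, use the hypothesis on $c$ to get $(\log r_n)^2=O(n)$, and bound $\sum_n n\,p_n^2\le\bigl(\sum_n p_n\bigr)^2$ by the telescoping/monotonicity argument — which is exactly the paper's proof, and the paper likewise simply invokes that $\{p_n\}$ is decreasing at this step. Your separate case $s>0$ handled through $s=s_\infty$ and $\sum_n r_n^{\,s+\epsilon}<\infty$ is valid but redundant, since the general argument you give for $s=0$ never uses $s$ and already covers all cases.
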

\begin{proof}
Since $p_n$ and $p_{n+1}$ are comparable, it suffices to prove that
\begin{align*}
    \sum_{n=1}^\infty (\log r_n)^2 p_n^2 <\infty.
\end{align*}
Note that $\{ p_n\}\subset \ell^2$ and we have that
\begin{align*}
1=\left( \sum_{n=1}^\infty p_n \right)^2 = \sum_{i,j=1}^\infty p_ip_j.
\end{align*}
Since the sequence $\{ p_n\}$ is decreasing, we have that
\begin{align*}
    \sum_{j=2}^\infty p_j^2(j-1) = \sum_{j=2}^\infty p_j\sum_{i=1}^{j-1}p_j \leq \sum_{j=2}^\infty p_j\sum_{i=1}^{j-1}p_i \leq \sum_{j=2}^\infty p_j\sum_{i=1}^\infty p_i      < \infty.
\end{align*}
Comparing in the limit the series of the left hand side to the series $\sum_n p_n^2 (\log r_n)^2$, we get that this series converge. %write the proof of this fact, idk how to do it tbh
\end{proof}

\begin{coro}
If $T$ is a Gauss-like map, then it has trimmed convergence.
\end{coro}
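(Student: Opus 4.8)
The plan is to derive the corollary from the preceding lemma: since a Gauss-like map is in particular an EMR map carrying an infinite-entropy Gibbs measure satisfying Assumption \ref{assumption}, it is enough to verify that the polynomial-decay condition (Definition \ref{decay}) forces
\[
\lim_{n\to\infty}\frac1n(\log r_n)^2=0 .
\]
Once this is known, the previous lemma applies verbatim to $\{X_n=-\log r_1\circ T^{n-1}\}$ and produces trimmed convergence.

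First I would unwind the polynomial-decay hypothesis. By Definition \ref{decay} the asymptotic $\alpha=\sup\{t\ge 0:\lim_{n\to\infty}n^t r_n<\infty\}$ of a Gauss-like map satisfies $1<\alpha<\infty$. The lower bound $\alpha>1$ gives, for any fixed $t\in(1,\alpha)$, that $n^t r_n\to 0$, hence $r_n\le n^{-t}$ for all large $n$, so that $-\log r_n\ge t\log n-C$. For the reverse estimate I would use $\alpha<\infty$: for $\beta>\alpha$ one has $\limsup_{n\to\infty}n^\beta r_n=\infty$, so $r_n\ge n^{-\beta}$ holds along an infinite set of indices, and I would then spread this polynomial lower bound to every $n$ using that $\{r_n\}$ is nonincreasing together with the two-sided control $K\le r_{n+1}/r_n\le K'$ from condition (4). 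Combining the two bounds gives $|\log r_n|=\Theta(\log n)$, whence $\frac1n(\log r_n)^2=O\big((\log n)^2/n\big)\to 0$, and the preceding lemma concludes the argument.

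The delicate step, and the one I expect to be the main obstacle, is turning the subsequential polynomial lower bound $r_n\gtrsim n^{-\beta}$ (all that $\alpha<\infty$ supplies a priori) into one valid for every $n$: the set of indices where $n^\beta r_n$ is large could be sparse, and controlling $r_n$ on the gaps genuinely needs both the monotonicity of $\{r_n\}$ and the uniform lower bound $r_{n+1}/r_n\ge K$. An alternative route that sidesteps this point is to appeal to Lemma \ref{trimsums} directly: by Proposition \ref{convexp} the decay ratio coincides with the convergence exponent, $s=s_\infty$, so $\{p_n\}$ — and hence $\{p_np_{n+1}\}$, by Assumption \ref{assumption} — decays at essentially the exponential rate $r_n^{s}$, and together with the estimate $\sum_n n\,p_n^2<\infty$ obtained inside the proof of the preceding lemma one checks $\sum_n(\log r_n)^2(p_n^2+2p_np_{n+1})<\infty$, which is exactly the hypothesis of Lemma \ref{trimsums}.
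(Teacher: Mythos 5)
Your main route is the paper's own (implicit) argument: the corollary is stated as an immediate consequence of the preceding lemma, the point being that polynomial decay of $\{r_n\}$ forces $\tfrac1n(\log r_n)^2\to 0$. The one step you single out as delicate is, however, exactly the step that does not follow from the tools you cite. From Definition \ref{decay} alone, finiteness of the asymptotic $\alpha$ only gives $r_n\geq n^{-\beta}$ along some subsequence, and neither monotonicity nor $K\leq r_{n+1}/r_n$ lets you spread this to every $n$: monotonicity transfers the bound to smaller indices but with the (possibly much larger) good index in the exponent, and the ratio bound only yields $r_m\geq K^{m-n}r_n$, an exponential loss across a gap. Indeed one can build a nonincreasing summable sequence with ratios in $[K,K']$ and finite asymptotic $\alpha$ which equals $n^{-2}$ on long stretches but, on sparse blocks of length about $\sqrt{m}$ starting at $m$, drops geometrically to size $m^{-2}K^{\sqrt{m}}$ and then stays flat; for such a sequence $(\log r_n)^2/n$ oscillates between $0$ and a positive constant, so the hypothesis of the preceding lemma fails. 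So, taken literally, the spreading argument is a genuine gap. To be fair, the paper itself reads ``polynomial decay'' as a two-sided polynomial bound: it asserts $Cn^{-\alpha-\epsilon}\leq r_n$ directly from polynomial decay in the first lemma of Section \ref{sec:haus}, and under that reading your first route is precisely the intended proof and is correct.

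Your alternative route (verifying the hypothesis of Lemma \ref{trimsums} directly) is a good idea and can genuinely sidestep any pointwise lower bound on $r_n$, but not via the combination you suggest: pairing $p_n\approx r_n^{s}$ with $\sum_n np_n^2<\infty$ still requires $(\log r_n)^2=O(n)$, which is the very point at issue. A correct check is: by Assumption \ref{assumption} and the Gibbs property it suffices to bound $\sum_n(\log r_n)^2p_n^2$; since the decay ratio exists, $p_n\leq r_n^{s-\epsilon}$ for large $n$; bound $(\log r_n)^2\leq C_\delta r_n^{-\delta}$; and use $s=s_\infty$ (Proposition \ref{convexp}) together with $s_\infty>0$ (which does follow from $\alpha<\infty$ and monotonicity) to conclude $\sum_n r_n^{2s-2\epsilon-\delta}<\infty$ for $\epsilon,\delta$ small. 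Note also that this route uses the existence of the decay ratio, which the corollary's statement does not formally assume, whereas the paper's route through the preceding lemma does not need it.
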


Now we are in position to prove Lemma \ref{limsup}:

\begin{proof}[Proof of Lemma \ref{limsup}]
Let $x$ be a point with coding sequence $(a_n)$. With an argument analogue to the one used in the proof of Theorem \ref{markovexact}, the limit in question is equivalent to 
\begin{align*}
    \limsup_{n\to\infty} \dfrac{\sum_{k=1}^n \log r_{a_k} }{\sum_{k=1}^{n-1}\log r_{a_k}} = 1+ \limsup_{n\to\infty}  \dfrac{\log r_{a_n}}{\log(r_{a_1}\hdots r_{k_{n-1}})} = 1+ \limsup_{n\to\infty} \dfrac{X_n(x)}{S_{n-1}(x)}
\end{align*}
By Lemma \ref{aarnak}, there exists a sequence $\{b_n \}$ and a set $Z_1\subseteq I$ of full measure such that
\begin{align*}
\lim_{n\to\infty}\dfrac{S'_n}{b_n}  = 1  \text{\quad for a.e. } x\in Z_1.
\end{align*}
Now by Lemma \ref{aar}, there exists a subset of full measure of $I$ such that
\begin{align*}
\limsup_{n\to\infty}\dfrac{S_n}{b_n}=\infty \quad \text{a.e.}
\end{align*}
or
\begin{align*}
\liminf_{n\to\infty}\dfrac{S_n}{b_n}=0 \quad \text{a.e.}.
\end{align*}
Since the trimmed sum is $o(b_n)$, the first condition must hold in a set of full measure $Z_2$. Let $Z=Z_1\cap Z_2$ and $x\in Z$. Given $1>\varepsilon >0$, there exists $n_0$ such that
\begin{align*}
\Big| \dfrac{S'_n}{b_n} - 1\Big| < \varepsilon 
\end{align*}
for every $n\geq n_0$ at $x$. Since $\limsup\frac{S_n}{b_n}=\infty$, given an integer $M>0$ there exists $n_1\geq n_0$ such that 
\begin{align*}
\dfrac{S_{n_1}}{b_{n_1}} > 2M+1
\end{align*}
at $x$. Combining these two inequalities, we obtain
\begin{align*}
\Big| \dfrac{\max\{X_1\hdots, X_{n_1} \}}{b_{n_1}} \Big| = \Big| \dfrac{S_{n_1}}{b_{n_1}} -  \dfrac{S'_{n_1}}{b_{n_1}} \Big| > 2M.
\end{align*}
Now, there exists an index $j\in\{1,\hdots,n_1 \}$ such that $X_j= \max\{X_1\hdots, X_{n_1}\}$ at $x$, and so $S'_j=S_{j-1}$. Since the $X_i$ are positive, we have that 
\begin{align*}
S_{j-1}= \ S'_j\leq \  S'_{n_1} < b_{n_1}(1+\varepsilon)<2b_{n_1}<\dfrac{\max\{X_1\hdots, X_{n_1}\}}{M} = \dfrac{X_j}{M}, %I think here it suffices with this
\end{align*}
and hence 
\begin{align*}
M<\dfrac{X_j}{S_{j-1}}.
\end{align*}
This implies that
\begin{align*}
\limsup_{n\to\infty} \dfrac{X_n}{S_{n-1}}=\infty
\end{align*}
and so 
\begin{align*}
\limsup_{n\to\infty} \dfrac{\log |I_n|}{\log |I_{n-1}|}=\infty
\end{align*}
as we wanted to prove.
\end{proof}

\section{Computing the Hausdorff dimension} \label{sec:haus}

With the tools developed in the previous sections, we proceed with the dimension computations.

Now we prove an upper bound for $\dim_H \mu$. This bound is related to the tail decay ratio $\hat{s}$. We prove two necessary lemmas to give the desired bound. The first lemma shows that $\{p_n\}$ decays slower than any polynomial, while the second lemma, shows the existence of $\hat{s}$ and that $\hat{s}=0$ for Gauss-like maps.

\begin{lema}
Suppose that the decay ratio exists and it is equal to $s$, the sequence $\{r_n\}$ decays polynomially and the measure $\mu$ has infinite entropy. Then for all $\delta > 0$, there exist constants $C,n_0$ such that
\begin{align*}
    p_n \geq \dfrac{C}{n^{1+\delta}}
\end{align*}
for all $n\geq n_0$.
\end{lema}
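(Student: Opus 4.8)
The plan is to push everything down to the level of the partition lengths $\{r_n\}$ by means of the decay ratio, and then invoke a polynomial \emph{lower} bound for the $r_n$. First I record the consequences of the hypotheses at the level of exponents: by Proposition~\ref{convexp} together with $h_\mu=\infty$ we have $s\le s_\infty$, and since $\{r_n\}$ decays polynomially with asymptotic $\alpha>1$, the remark following Definition~\ref{decay} gives $s_\infty=1/\alpha$; hence $s\le 1/\alpha<1$.

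Fix $\delta>0$. The key numerical step is to choose $\epsilon>0$ so small that
\begin{align*}
\beta:=\frac{1+\delta}{s+\epsilon}>\alpha ,
\end{align*}
which is possible because $(1+\delta)/s\ge(1+\delta)\alpha>\alpha$ and the quantity is continuous in $\epsilon$; note that by construction $\beta(s+\epsilon)=1+\delta$. Next I use the decay ratio: since $\log p_n/\log r_n\to s$, there is $n_1$ with $\log p_n/\log r_n<s+\epsilon$ for $n\ge n_1$, and because $\log r_n<0$ this rearranges to $p_n>r_n^{\,s+\epsilon}$.

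The remaining ingredient is a polynomial lower bound for the lengths: I would show that, because $\{r_n\}$ is non-increasing, satisfies the bounded-ratio condition $K\le r_{n+1}/r_n\le K'$, and decays polynomially with asymptotic $\alpha<\beta$, there are $c>0$ and $n_2$ with $r_n\ge c\,n^{-\beta}$ for all $n\ge n_2$. Granting this, for $n\ge n_0:=\max\{n_1,n_2\}$,
\begin{align*}
p_n>r_n^{\,s+\epsilon}\ge c^{\,s+\epsilon}\,n^{-\beta(s+\epsilon)}=c^{\,s+\epsilon}\,n^{-(1+\delta)},
\end{align*}
so the claim holds with $C=c^{\,s+\epsilon}$.

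I expect the lower bound $r_n\ge c\,n^{-\beta}$ to be the main obstacle. The definition of the asymptotic $\alpha$ supplies upper bounds $r_n\le n^{-t}$ eventually for every $t<\alpha$ essentially for free, but a matching lower bound genuinely uses the regularity of the partition. Setting $a_n=\log(1/r_n)$, monotonicity makes $(a_n)$ non-decreasing, the bounded-ratio condition forces its increments to be bounded by $\log(1/K)$, and the asymptotic being equal to $\alpha$ translates into $\liminf_n a_n/\log n=\alpha$; the delicate point is to combine these so as to bound $a_n$ from above by a constant plus $\beta\log n$ for large $n$, i.e.\ to rule out that $r_n$ drops far below $n^{-\alpha}$ along a sparse sequence of indices. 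This is precisely the place where the polynomial-decay hypothesis on the partition is used, rather than only $\lambda_\mu=\infty$.
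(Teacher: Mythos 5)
Your skeleton is the same as the paper's: from the decay ratio you obtain $p_n> r_n^{\,s+\epsilon}$ for $n$ large, and you then want an eventual polynomial lower bound on $r_n$ whose exponent, multiplied by $s+\epsilon$, does not exceed $1+\delta$. The paper does exactly this, using Proposition \ref{convexp} to write $s=s_\infty=1/\alpha$, choosing $\epsilon$ with $(\alpha+\epsilon)(s+\epsilon)\le 1+\delta$, and quoting the bound $r_n\ge C n^{-(\alpha+\epsilon)}$ for all large $n$; your bookkeeping with $\beta=(1+\delta)/(s+\epsilon)>\alpha$ is equivalent and fine.

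The genuine gap is the step you yourself flag and leave open: the lower bound $r_n\ge c\,n^{-\beta}$. Moreover, the route you sketch for closing it cannot work, because monotonicity, the bounded-ratio condition $K\le r_{n+1}/r_n\le K'$, and the value of the asymptotic (equivalently $\liminf_n \log(1/r_n)/\log n=\alpha$) do not exclude sparse dips. For instance, let $r_n$ follow $n^{-\alpha}$ except that, starting at a sparse index $n_k$, it decreases by the factor $K$ per step until it reaches $n_k^{-k\alpha}$ and then stays flat until index about $n_k^{\,k}$, where it rejoins $n^{-\alpha}$. This sequence is non-increasing, has ratios in $[K,1]$, is summable (the $k$-th flat block contributes about $n_k^{\,k(1-\alpha)}$), and still has asymptotic exactly $\alpha$: one has $r_n\le n^{-\alpha}$ throughout, so $n^t r_n\to 0$ for every $t<\alpha$, while for $t>\alpha$ the limit fails to exist. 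Yet $r_n\approx n^{-k\alpha}$ at the $k$-th dip, so no bound $r_n\ge c\,n^{-\beta}$ with a fixed $\beta$ holds. So the ingredients you list genuinely only give the one-sided upper bounds on $r_n$, as you suspected. What the paper actually relies on, here and elsewhere (see the proof of Lemma \ref{taildecay}, where membership of $t$ in the defining set is read as $\lim_n n^t r_n=d$ existing, hence two-sided bounds $(d-\epsilon)n^{-t}<r_n<(d+\epsilon)n^{-t}$), is a two-sided reading of polynomial decay, i.e. $r_n$ comparable to $n^{-\alpha}$ up to an $\epsilon$ loss in the exponent; with that reading the missing step is immediate and the rest of your argument coincides with the paper's proof. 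Without it, the step — and indeed the lemma under the literal definition of the asymptotic — is not recoverable from the regularity conditions you invoke, so you should either prove the two-sided bound from a strengthened hypothesis or cite it as the intended meaning of polynomial decay.
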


\begin{proof}
Let $\alpha > 0$ be the polynomial decay of $r_n$. Then by proposition \ref{convexp}, $s = s_\infty = 1/\alpha$, we can take $\epsilon>0$ small enough so that $\epsilon\alpha+\epsilon s + \epsilon^2 < \delta$. Then there exists $C>0$ and $n_0\in\N$ such that
\begin{align*}
    \dfrac{C}{n^{\alpha+\epsilon}}&\leq r_n \\
    \log r_n^{s+\epsilon} &\leq \log p_n
\end{align*}
for all $n\geq n_0$. This implies that 
\begin{align*}
    \dfrac{C^{s+\epsilon}}{n^{1+\delta}}\leq \dfrac{C^{s+\epsilon}}{n^{(\alpha+\epsilon)(s+\epsilon)}} \leq p_n
\end{align*}
for all $n\geq n_0$ as we wanted.
\end{proof}

\begin{lema}
 Under the same assumptions of the previous lemma, the tails decay ratio $\hat s$ exists and is equal to zero.
\end{lema}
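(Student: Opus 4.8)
The plan is to work directly with the tail sums $R_n=\sum_{m\geq n}r_m$ and $P_n=\sum_{m\geq n}p_m$ and to show that $\log P_n$ is of strictly smaller order than $\log R_n$, so that their ratio tends to $0$. First I would record the control on the numerator. On the one hand $P_n\leq P_1=1$, hence $\log P_n\leq 0$. On the other hand, the preceding lemma provides, for every $\delta>0$, constants $C,n_0$ with $p_n\geq C n^{-(1+\delta)}$ for $n\geq n_0$; summing and comparing with $\int_n^\infty x^{-(1+\delta)}\,dx$ gives $P_n\geq (C/\delta)\,n^{-\delta}$ for $n\geq n_0$. Therefore $0\leq -\log P_n\leq \delta\log n+O(1)$, which already shows $\log P_n/\log n\to 0$.

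Next I would control the denominator. Since $\{r_n\}$ decays polynomially, its asymptotic $\alpha$ satisfies $\alpha>1$, so Lemma \ref{taildecay} applies and $R_n$ has asymptotic $\alpha-1>0$. Fixing $\epsilon\in(0,\alpha-1)$, there is a constant $c$ with $R_n\leq c\,n^{-(\alpha-1-\epsilon)}$ for all large $n$, hence $-\log R_n\geq(\alpha-1-\epsilon)\log n-O(1)\to\infty$; in particular $\log R_n<0$ for all large $n$.

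Finally I would combine the two estimates: for all large $n$,
\[
0 \;\leq\; \frac{\log P_n}{\log R_n} \;=\; \frac{-\log P_n}{-\log R_n} \;\leq\; \frac{\delta\log n+O(1)}{(\alpha-1-\epsilon)\log n-O(1)},
\]
where the leftmost inequality uses $\log P_n\leq 0$ and $\log R_n<0$. Letting $n\to\infty$ yields $\limsup_n \log P_n/\log R_n\leq \delta/(\alpha-1-\epsilon)$, and since $\delta>0$ is arbitrary while $\epsilon$ is fixed, this forces $\limsup_n \log P_n/\log R_n\leq 0$. Together with the trivial bound $\geq 0$, the limit defining $\hat s$ exists and equals $0$.

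I do not expect a genuine obstacle here; the only point to keep track of is that "polynomial decay'' enters solely through the two clean consequences already available, namely the lower bound $p_n\geq C\,n^{-(1+\delta)}$ from the preceding lemma and the asymptotic $\alpha-1$ of $R_n$ from Lemma \ref{taildecay}, so no further regularity of $\{r_n\}$ need be invoked. It is worth remarking that $\hat s=0$ stands in contrast to the decay ratio $s=s_\infty=1/\alpha>0$ furnished by Proposition \ref{convexp}; it is precisely this gap between $s$ and $\hat s$ in the infinite-entropy regime that will later be responsible for $\dim_H\mu=0$.
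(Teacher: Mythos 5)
Your proof is correct and follows essentially the same route as the paper: the lower bound $P_n\geq (C/\delta)n^{-\delta}$ from the preceding lemma, a polynomial upper bound on the tail $R_n$, and then letting $\delta\to 0$ to force $\limsup \log P_n/\log R_n\leq 0$. The only cosmetic differences are that you obtain the bound on $R_n$ by citing Lemma \ref{taildecay} rather than summing $r_n\leq Cn^{-(\alpha-\epsilon)}$ directly, and that you make explicit the sign argument ($\log P_n\leq 0$, $\log R_n<0$) giving the trivial lower bound $0$, which the paper leaves implicit.
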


\begin{proof}
By the lemma above, for $\delta>0$, there are constants $C,n_0$ such that
\begin{align*}
    p_n \geq \dfrac{C}{n^{1+\delta}}
\end{align*}
for all $n\geq n_0$. This implies that 
\begin{align*}
    \sum_{m= n}^\infty p_m \geq \dfrac{C}{\delta n^\delta}
\end{align*}
for $n\geq n_0$. On the other hand, if we take $\epsilon < \alpha - 1$, there exists $n_1$ such that
\begin{align*}
    r_n \leq \dfrac{C}{n^{\alpha-\epsilon}}
\end{align*}
for $n\geq n_1$ and consequently,
\begin{align*}
 \sum_{m= n}^\infty r_m \leq \dfrac{C}{(\alpha-\epsilon-1)n^{\alpha-\epsilon-1}} 
\end{align*}
 for $n\geq n_1$. Hence
 \begin{align*}
     \dfrac{\log  \sum_{m= n}^\infty p_m }{\log  \sum_{m= n}^\infty r_m} \leq \dfrac{\log C-\log\delta -\delta\log n}{\log C -\log(\alpha-\epsilon-1)-(\alpha-\epsilon-1)\log n}
 \end{align*}
 for $n\geq \max\{n_0,n_1 \}$. This implies that
 \begin{align*}
       \limsup_{n\to\infty} \dfrac{\log  \sum_{m= n}^\infty p_m }{\log  \sum_{m= n}^\infty r_m} \leq \dfrac{\delta}{(\alpha-\epsilon-1)}.
 \end{align*}
 Letting $\delta\to 0$ we conclude the result.
\end{proof}

Now we can compute the lower local dimension, and consequently, obtain the Hausdorff dimension of the measure.

\begin{prop}\label{upperbound}
Suppose $T$ is a Gauss-like map and $\mu$ is an infinite entropy Gibbs measure satisfying assumption \ref{assumption}. Then 
\begin{align*}
\liminf_{r\to 0} \dfrac{\log \mu(B(x,r))}{\log r} \leq \hat{s}
\end{align*}
for $\mu$ almost every $x\in I$.
\end{prop}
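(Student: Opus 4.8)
The plan is to exhibit, for $\mu$-almost every $x$, a sequence of radii $r_j \to 0$ along which $\log\mu(B(x,r_j))/\log r_j$ is at most $\hat s + \varepsilon$ for any prescribed $\varepsilon > 0$. The natural radii to use are those coming from the ``bad'' levels produced by Proposition \ref{limsup}: along a subsequence $n_j$ the ratio $\log|I_{n_j}(x)|/\log|I_{n_j-1}(x)|$ tends to infinity, meaning the cylinder $I_{n_j}(x)$ is vastly shorter than its parent $I_{n_j-1}(x)$. At such a level the ball $B(x, |I_{n_j-1}(x)|)$ is comparatively large — it contains $I_{n_j-1}(x)$ — while its $\mu$-measure, being dominated by $\mu(\hat I_{n_j-1}(x))$ up to bounded-distortion constants, is comparatively large as well, and this imbalance is exactly what forces the local dimension down.

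The key steps, in order, are as follows. First, fix $x$ in the full-measure set where Proposition \ref{limsup} holds, where the Birkhoff averages $f_{n,k}/n \to p_k$ hold, and where the coding is well-defined; choose a subsequence $n_j \to \infty$ with $\log|I_{n_j}(x)|/\log|I_{n_j-1}(x)| \to \infty$. Second, set $r_j$ to be a radius comparable to $|I_{n_j - 1}(x)|$ — more precisely I would take $r_j$ so that $B(x,r_j)$ is sandwiched between $I_{n_j-1}(x)$ and $\hat I_{n_j-1}(x)$ (or a bounded union of level-$(n_j-1)$ cylinders), using the fact that $T$ is Gauss-like so neighbouring cylinders have comparable length by the constant $K$. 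Third, estimate $\mu(B(x,r_j))$ from above by $\mu(\hat I_{n_j-1}(x)) \le C \mu(I_{n_j-1}(x))$ (Assumption \ref{assumption} gives that the three sibling cylinders have comparable measure). Fourth, take logs and ratios:
\begin{align*}
\dfrac{\log\mu(B(x,r_j))}{\log r_j} \le \dfrac{\log C + \log\mu(I_{n_j-1}(x))}{\log|I_{n_j-1}(x)| + O(1)},
\end{align*}
and observe by Lemma \ref{metricprop}(a),(d) that the right-hand side is, up to lower-order terms, $\log(p_{a_1}\cdots p_{a_{n_j-1}}) / \log(r_{a_1}\cdots r_{a_{n_j-1}})$, which by Theorem \ref{markovexact} converges to $s$ along the full sequence — but this only gives the bound $s$, not $\hat s$. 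To get $\hat s$ one must instead place the ball more cleverly: since $I_{n_j}(x)$ is tiny inside $I_{n_j-1}(x)$, the digit $a_{n_j}$ is large, and I would choose the radius $r_j$ to be comparable not to $|I_{n_j-1}(x)|$ but to $|\bigcup_{m\ge 0} I(a_1,\dots,a_{n_j-1}, a_{n_j}+m)|$, the tail-union cylinder, whose length is $\asymp |I(a_1,\dots,a_{n_j-1})| \cdot R_{a_{n_j}}$ and whose measure is $\asymp \mu(I(a_1,\dots,a_{n_j-1})) \cdot \sum_{m \ge a_{n_j}} p_m$, via Lemma \ref{metricprop}(c),(f). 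Then the ratio becomes
\begin{align*}
\dfrac{\log\mu(B(x,r_j))}{\log r_j} \approx \dfrac{\log\mu(I(a_1,\dots,a_{n_j-1})) + \log\sum_{m\ge a_{n_j}} p_m}{\log|I(a_1,\dots,a_{n_j-1})| + \log R_{a_{n_j}}},
\end{align*}
and since $a_{n_j}\to\infty$ while the prefix contributes only linearly in $n_j$ (by the Birkhoff control on $f_{n,k}$ for bounded $k$, as in the proof of Theorem \ref{markovexact}) whereas $\log R_{a_{n_j}} \to -\infty$ faster, the dominant terms are $\log\sum_{m\ge a_{n_j}}p_m$ over $\log R_{a_{n_j}}$, whose ratio tends to $\hat s$ by definition of the tail decay ratio. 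Choosing $n_j$ so that additionally $a_{n_j}$ grows fast enough relative to $n_j$ — which is possible precisely because $\limsup X_n/S_{n-1} = \infty$ means the single jump $X_{n_j} = -\log r_{a_{n_j}}$ dominates the whole partial sum — makes the prefix terms negligible in the ratio.

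The main obstacle is the last point: controlling the competition between the linearly-growing prefix sums $\sum_{k=1}^{n_j-1}\log p_{a_k}$, $\sum_{k=1}^{n_j-1}\log r_{a_k}$ and the single large terms $\log\sum_{m\ge a_{n_j}}p_m$, $\log R_{a_{n_j}}$. One needs that along the chosen subsequence the jump term genuinely dominates, i.e. $\log R_{a_{n_j}} / \sum_{k=1}^{n_j-1}\log r_{a_k} \to \infty$; this should follow by combining Proposition \ref{limsup} (which gives a subsequence where $X_{n_j}/S_{n_j-1}\to\infty$, hence $\log r_{a_{n_j}}/S_{n_j-1} \to -\infty$) with Lemma \ref{taildecay} (which controls $\log R_{a_{n_j}}$ in terms of $\log r_{a_{n_j}}$ up to a bounded multiplicative factor $\alpha/(\alpha-1)$, since the decay is polynomial). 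Once the jump dominates both numerator and denominator, the ratio of the full expression is forced to the ratio of the jump terms alone, namely $\hat s$, and since $\varepsilon$ was arbitrary and (by the previous two lemmas) $\hat s$ exists and equals the stated value, we conclude $\underline d_\mu(x) \le \hat s$ almost everywhere.
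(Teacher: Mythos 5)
Your proposal is correct and follows essentially the same route as the paper: you bound $\mu(B(x,r))$ from below by the measure of the tail union $\bigcup_{m\ge 0} I(a_1,\dots,a_{n-1},a_n+m)$ at the exceptional levels supplied by Proposition \ref{limsup}, estimate measure and length via Lemma \ref{metricprop}(c),(f), and use the polynomial decay (Lemma \ref{taildecay}) together with the fact that the single jump $-\log r_{a_{n_j}}$ dominates the prefix Birkhoff sums to force the ratio down to $\hat{s}$. (Your parenthetical remark that the prefix sums grow only linearly is inaccurate --- they grow superlinearly since $\lambda_\mu=\infty$ --- but this is immaterial, as your actual domination argument rests on $X_{n_j}/S_{n_j-1}\to\infty$, exactly as in the paper.)
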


\begin{proof}
Let $x$ be a point where Theorems \eqref{markovexact} and \eqref{limsup}, and Lemma \ref{aar} hold (such set is of full measure). Given such $x$ and $n\in\N$, take 
\begin{align*}
r_n=\bigg| \bigcup_{m=0}^\infty I_n^{m\cdot\ell}(x) \bigg|,
\end{align*}
where $I_n^{m\cdot\ell}(x)=I(a_1(x),\hdots, a_{n-1}(x),a_n(x)+m)$. Then
\begin{align*}
\bigcup_{m=0}^\infty I_n^{m\cdot\ell}(x) \subseteq B(x,r_n)
\end{align*}
and so
\begin{align*}
\dfrac{\log\mu(B(x,r_n))}{\log r_n} \leq \dfrac{\log\mu\left(\bigcup_{m=0}^\infty I_n^{m\cdot\ell}(x)  \right)}{\log\bigg| \bigcup_{m=0}^\infty I_n^{m\cdot\ell}(x) \bigg|}.
\end{align*}
Note now that the above inequality can be expressed in terms of the sequences $\{p_n\},\{r_n\}$ using Lemma \ref{metricprop}
\begin{align*}
\log\mu\left(\bigcup_{m=0}^\infty I_n^{m\cdot\ell}(x)  \right)& \geq \sum_{k=1}^{n-1}\log p_{a_k} + \log\left( \sum_{m=0}^\infty p_{a_n+m} \right)	-nG_1 - G_2 \\
\log\bigg| \bigcup_{m=0}^\infty I_n^{m\cdot\ell}(x) \bigg| &\leq \sum_{k=1}^{n-1}\log r_{a_k} + \log\left( \sum_{m=0}^\infty r_{a_n+m} \right)+nD_1  + D_2
\end{align*}
where $G_1,G_2$ are constants arising from the Gibbs property and the finite first variation of the potential, and $D_1,D_2$ are constants arising from the bounded distortion property. Thus, we have
\begin{align*}
\dfrac{\log\mu(B(x,r_n))}{\log r_n} \leq \dfrac{\sum_{k=1}^{n-1}\log p_{a_k} + \log\left( \sum_{m=0}^\infty p_{a_n+m} \right)	-nG_1 - G_2}{\sum_{k=1}^{n-1}\log r_{a_k} + \log\left( \sum_{m=0}^\infty r_{a_n+m} \right)+nD_1  + D_2}.
\end{align*}
For $n$ large enough, we have that 
\begin{align*}
-\epsilon+\hat{s}< \dfrac{-\log\sum_{m=0}^\infty p_{n+m}}{-\log\sum_{m=0}^\infty r_{n+m}} < \hat{s}+\epsilon
\end{align*}
and 
\begin{align*}
-\epsilon+s< \dfrac{-\sum_{k=1}^{n-1}\log p_{a_k}}{-\sum_{k=1}^{n-1}\log r_{a_k}} < s+\epsilon.
\end{align*}
Thus, if $a_n$ is large enough, we have 
\begin{align*}
\dfrac{\log\mu(B(x,r_n))}{\log r_n} \leq  \dfrac{(s + \epsilon)\left(\sum_{k=1}^{n-1}\log r_{a_k}\right) + (\hat{s}+\epsilon)\log\left( \sum_{m=0}^\infty r_{a_n+m} \right)-nG_1-G_2}{\sum_{k=1}^{n-1}\log r_{a_k} + \log\left( \sum_{m=0}^\infty r_{a_n+m} \right)+nD_1+D_2}.
\end{align*}
If $\alpha>1$ is the polynomial decaying ratio of $\{r_n\}$, then by Lemma \ref{taildecay} we get the tail decay asymptotic
\begin{align*}
\sum_{m=0}^\infty r_{n+m}\asymp \dfrac{1}{n^{\alpha-1}}.
\end{align*}
We can then rewrite the above inequality as
\begin{align*}
    \dfrac{\log\mu(B(x,r_n))}{\log r_n} \leq  \dfrac{(s + \epsilon)\left(\sum_{k=1}^{n-1}\log r_{a_k}\right) + (\hat{s}+\epsilon)K(\alpha-1)\log\left( r_{a_n} \right)-nG_1-G_2}{\sum_{k=1}^{n-1}\log r_{a_k} + K(\alpha-1)\log\left( r_{a_n} \right)+nD_1+D_2}.
\end{align*}
where $K$ is the constant implied in the tail asymptotic for $\{r_n\}$.
 By Lemma \ref{aar} and Proposition \ref{limsup}, we can take an increasing subsequence $a_{n_k}$ so that
\begin{align*}
\lim_{k\to\infty}\dfrac{-\log r_{a_{n_k}}}{-\sum_{k=1}^{n_k-1}\log r_{a_k}}&=\infty, \\
\lim_{k\to\infty}-\dfrac{1}{n_k}\log r_{a_{n_k}}&=\infty.
\end{align*}
We get then
\begin{align*}
\lim_{k\to\infty}\dfrac{\log\mu(B(x,r_{n_k}))}{\log r_{n_k}} \leq  \hat{s}+\epsilon
\end{align*}
Letting $\epsilon\to 0$ we conclude that $\underline{d}(x)\leq \hat{s}$ as we wanted.
\end{proof}

From the above result, we can conclude that for such measures, $\dim_H \mu =0$.

\section{Packing dimension} \label{sec:pack}

In the previous section we completely determined the Hausdorff dimension of the measures of our interest. Now we proceed to compute the packing dimension. First we give a lower bound for the upper local dimension. The proof uses similar ideas to the proof of Lemma \ref{upperbound}: we choose a particular cover of the ball and use that the Birkhoff sums for the potentials $-\log p_{a_1}, -\log r_{a_1}$ grow faster than linear.

\begin{prop}
Suppose $T$ is a Gauss-like map and $\mu$ is an infinite entropy Gibbs measure satisfying assumption \ref{assumption}. Then 
\begin{align*}
\limsup_{r\to 0} \dfrac{\log \mu(B(x,r))}{\log r} \geq s
\end{align*}
for $\mu$ almost every $x\in I$.
\end{prop}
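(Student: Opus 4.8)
The plan is to mirror the argument of Proposition \ref{upperbound}, but now choosing a cover of $B(x,r)$ that \emph{isolates} the cylinder $I_n(x)$ together with its two immediate neighbours, rather than the infinite fan of cylinders to one side. Fix $x$ in the full-measure set where Theorem \ref{markovexact} (so $\underline\delta(x)=\overline\delta(x)=s$), Proposition \ref{limsup}, and the trimmed-sum results of Section \ref{sec:infinite} hold. For such $x$ and each $n$, set $\rho_n=|\hat I_n(x)|=|I_n^l(x)\cup I_n(x)\cup I_n^r(x)|$. Since the three cylinders are adjacent and $x\in I_n(x)$, a ball of radius slightly smaller than $|I_{n-1}(x)|$ around $x$ contains $I_n(x)$ but — and this is the point — we want a \emph{smaller} radius $r$ whose ball is covered by $\hat I_n(x)$; by Renyi's property and assumption \ref{assumption}, $|I_n^l|,|I_n^r|$ are comparable to $|I_n|$, so $\rho_n\asymp |I_n(x)|$ and $B(x,\rho_n)$ contains $I_n(x)$ while being itself essentially $\hat I_n(x)$. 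Hence
\begin{align*}
\frac{\log\mu(B(x,\rho_n))}{\log\rho_n}\geq \frac{\log\mu(\hat I_n(x))}{\log|I_n(x)|}+o(1)\geq \frac{\log\mu(I_n(x))+\log 3 + \text{const}}{\log|I_n(x)|}+o(1),
\end{align*}
using $\mu(\hat I_n)\leq 3\max\{\mu(I_n),\mu(I_n^l),\mu(I_n^r)\}\leq C\mu(I_n(x))$ by the Gibbs property and assumption \ref{assumption}, and that $\log|I_n(x)|\to-\infty$.

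Next I would feed this into the symbolic-exact-dimensionality already proved: by Theorem \ref{markovexact}, $\log\mu(I_n(x))/\log|I_n(x)|\to s$, so the displayed right-hand side tends to $s$ along the full sequence $n\to\infty$. Since $\rho_n\to 0$, taking $\limsup$ over the subsequence $\{\rho_n\}$ of radii gives $\limsup_{r\to 0}\log\mu(B(x,r))/\log r\geq \limsup_n \log\mu(B(x,\rho_n))/\log\rho_n \geq s$. The only subtlety is making precise that $B(x,\rho_n)\subseteq \hat I_n(x)$ up to a bounded multiplicative error in the radius, which is where Proposition \ref{limsup} enters: because $|I_n(x)|/|I_{n-1}(x)|$ can be arbitrarily small, the ball $B(x,\rho_n)$ need not be contained in $I_{n-1}(x)$, but it is contained in $\hat I_n(x)$ precisely when $\rho_n$ is at most the distance from $x$ to $\partial \hat I_n(x)$; and since $x\in I_n(x)$ sits strictly inside $\hat I_n(x)$ with both flanking cylinders of comparable size, this distance is $\asymp |I_n(x)|\asymp\rho_n$, so after shrinking $\rho_n$ by a fixed constant factor the inclusion holds — and a fixed constant factor does not affect the limit of $\log\mu(B)/\log r$.

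The main obstacle I anticipate is exactly this geometric containment step: one must rule out that the neighbouring cylinder on one side is so much longer than $I_n(x)$ that the ball $B(x,\rho_n)$ spills past $\hat I_n(x)$ into a fourth cylinder, and simultaneously that $\mu$ of that potential spillover is not so large as to destroy the lower bound. Assumption \ref{assumption} (the bounded ratio $K\leq q_{n+1}/q_n\leq K'$, equivalently for $p_n$ and, via Renyi, for $r_n$) is the tool that controls both: consecutive cylinders at the same level have comparable length and comparable measure, so $\hat I_n(x)$ captures a definite proportion of any ball of radius $\asymp|I_n(x)|$ centred in $I_n(x)$, and enlarging to include one more neighbour on each side only changes $\mu$ and length by bounded factors. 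Once this is in place, the estimate $\mu(B(x,\rho_n))\leq C\mu(I_n(x))$ combined with $\rho_n\asymp|I_n(x)|$ and Theorem \ref{markovexact} closes the argument. I would also note in passing that, unlike in Proposition \ref{upperbound}, we do not need the polynomial-decay hypothesis on $\{r_n\}$ for this bound — only the Gibbs and Renyi comparabilities — although the statement is phrased for Gauss-like maps for uniformity with the packing-dimension theorem.
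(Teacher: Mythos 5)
Your approach is essentially the paper's: pick radii comparable to $|I_n(x)|$, trap the ball inside $\hat I_n(x)=I_n^l\cup I_n\cup I_n^r$, use the Gibbs property together with Assumption \ref{assumption} and Renyi to get $\mu(B)\leq C\mu(I_n(x))$ and radius $\asymp|I_n(x)|$, and then let Theorem \ref{markovexact} deliver the limit $s$. (Your side remarks are also accurate: polynomial decay is not needed here, and in fact neither Proposition \ref{limsup} nor the trimmed-sum machinery is really used in this direction; the paper only needs Birkhoff, Lemma \ref{infinitelyap} and Theorem \ref{markovexact}.)

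There is, however, one genuine gap: the right neighbour $I_n^r(x)=I(a_1,\hdots,a_{n-1},a_n-1)$ exists only when $a_n\geq 2$. When $a_n=1$, the cylinder $I_n(x)$ is the extreme subcylinder of its parent, $x$ may lie arbitrarily close to the boundary of $I(a_1,\hdots,a_{n-1})$, and a ball of radius $\asymp|I_n(x)|$ can spill into the adjacent cylinder of a \emph{lower} level, meeting a tail of its deep subcylinders. The measure of such a spillover is a tail sum of the type governed by the tail decay ratio $\hat s=0$, i.e.\ exactly the mechanism that forces $\underline d_\mu=0$ in Proposition \ref{upperbound}; so for those $n$ the estimate $\mu(B(x,\rho_n))\leq C\mu(I_n(x))$ is unjustified, and your claim that the bound holds ``along the full sequence $n\to\infty$'' fails whenever the digit $1$ occurs (which it does infinitely often a.s.\ when $p_1>0$). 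The repair is exactly what the paper does and is cheap because only a $\limsup$ is needed: since $p_1<1$, Birkhoff's theorem gives, for a.e.\ $x$, an infinite subsequence $k_n$ with $a_{k_n}\neq 1$, and one runs your estimate only along these times, taking the radius at most $\min\{|I_{k_n}|,|I_{k_n}^r|,|I_{k_n}^l|\}$ so that the containment in $\hat I_{k_n}(x)$ is immediate. With that restriction added, your argument matches the paper's proof.
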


\begin{proof} By Birkhoff's Ergodic Theorem, there exists a subset $Z_1\subset I$ of full measure such that
\begin{align*}
    \lim_{n\to\infty} \dfrac{f_{n,1}}{n} = p_1 
\end{align*}
in $Z$, where $f_{n,k}$ is as defined in the proof of \eqref{markovexact}. Intersect $Z_1$ with the subset $Z_2\subset I$ of full measure given by Lemma \ref{infinitelyap} and pick a point $x\in Z_1\cap Z_2$. Since $p_1<1$, we can pick a subsequence $k_n\nearrow \infty$ such that $a_{k_n}\neq 1$ for every $n$. Then, for all $n$, take $r_n=\min\{ |I_{k_n}|, |I_{k_n}^r|, |I_{k_n}^\ell|\}=|I_{k_n}^\ell|$. Here we denote $I_n^\ell=I(a_1,\hdots,a_{n-1},a_n+1)$ and $I_n^r=I(a_1,\hdots,a_{n-1},a_n-1)$ whenever $a_n>1$. This choice of $r_n$ implies that $B(x,r_n)\subseteq I_{k_n}^\ell\cup I_{k_n}\cup I_{k_n}^r$. From the Gibbs property and the fact that $\varphi(x_n),\varphi(x_{n+1})$, and $r_n,r_{n+1}$ are comparable, it follows that there are constants $C_1,C_2>0$ such that $\mu (I_n^\ell\cup I_n\cup I_n^r) \leq C_1\mu (I_n )$ and $|I_n^\ell|\geq C_2 |I_n|$ for every $n$. Using this and Lemma \ref{metricprop} we have that
\begin{align*}
\dfrac{\log \mu (B(x,r_n ))}{\log r_n} &\geq \dfrac{\log(  C_1\mu (I_{k_n} ))}{\log(C_2 |I_{k_n}|)}   \\
&\geq \dfrac{\log C_1 +k_n G_1 + G_2 \sum_{i=1}^{k_n}\log p_{a_i}}{\log C_2-D_2 - k_n\log D_1 +\sum_{i=1}^{k_n}\log r_{a_i}}.
\end{align*}
By Lemma \ref{infinitelyap} and Theorem \ref{markovexact}, the last expression converges to $s$, as desired.
\end{proof}

Giving an upper bound for the upper local dimension requires a more involved analysis of the geometric structure of the partition and its relation to the geometry of the balls. We will need the following lemma:

\begin{lema}\label{ineqsums}
Suppose that $\{ r_n\}$ decays polynomially with degree $\alpha >1$. Then, for every $0<\delta<\min\{1/3,(\alpha-1)/(\alpha+1)\}$, $0<\eta<1/2$ there exists $k_0\in\N$ such that
\begin{align*}
    \dfrac{\log\displaystyle\sum_{m=k}^{n+k}p_m}{\log\displaystyle\sum_{m=k-1}^{n+k+1}r_m} \leq \dfrac{1+\delta}{\alpha-\delta}+\eta
\end{align*}
for all $k\geq k_0$ and $n\in\N$.
\end{lema}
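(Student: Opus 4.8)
The plan is to reduce the claimed inequality to two separate asymptotic estimates: an upper bound for the numerator $\log\sum_{m=k}^{n+k}p_m$ in terms of $-\log k$ (uniform in $n$), and a lower bound for the denominator $\log\sum_{m=k-1}^{n+k+1}r_m$ in terms of $-\log k$ (again uniform in $n$), after which dividing gives the bound $\frac{1+\delta}{\alpha-\delta}+\eta$ for $k$ large. The key observation making this uniform in $n$ is monotonicity: since $\{p_m\}$ and $\{r_m\}$ are non-increasing and both the partial tails $\sum_{m=k}^{n+k}p_m$ and $\sum_{m=k-1}^{n+k+1}r_m$ are sandwiched between a single term and the full tail $\sum_{m\geq k-1}r_m$ (resp.\ $\sum_{m\geq k}p_m$), the logarithms are controlled by quantities that do not depend on $n$ once $k$ is large. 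So the $n$-uniformity, which looks like the delicate point, is actually handled cleanly by these sandwich bounds.

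First I would fix $\alpha>1$ the polynomial decay degree of $\{r_n\}$ and recall from Lemma \ref{taildecay} that $R_k=\sum_{m\geq k}r_m$ has asymptotic $\alpha-1$, so for any small $\epsilon$ we have $R_k\asymp k^{-(\alpha-1)}$ up to $k^{\pm\epsilon}$; hence $\log\sum_{m=k-1}^{n+k+1}r_m \leq \log R_{k-1} \leq -(\alpha-1-\epsilon)\log k + O(1)$ for $k$ large (this is the denominator upper bound in absolute value, i.e.\ a lower bound for $-\log(\cdot)$). For the numerator, I would use that $\{r_n\}$ decays polynomially together with the existence of the decay ratio $s=s_\infty=1/\alpha$ (Proposition \ref{convexp}) and the two lemmas just proved showing $p_n\geq Cn^{-(1+\delta')}$ and that $\hat s=0$: concretely, $\sum_{m=k}^{n+k}p_m\geq p_{n+k}\cdot 1$ is too weak, so instead bound $\sum_{m=k}^{n+k}p_m\leq \sum_{m\geq k}p_m = P_k$ and use $p_m\leq r_m^{s-\epsilon}\asymp m^{-(\alpha+\epsilon')}$ for $m$ large to get $P_k\leq C k^{-(\alpha-1-\epsilon'')}$... wait, that would make the ratio tend to $1$, not $\frac{1+\delta}{\alpha-\delta}$, so the correct route is the opposite: one wants the numerator's logarithm to be \emph{small} in absolute value relative to the denominator's. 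The right estimate is $\log\sum_{m=k}^{n+k}p_m \geq \log p_{n+k}\geq \log\sum_{m\geq k}p_m$ is false; rather one sandwiches $\sum_{m=k}^{n+k}p_m$ between $p_{n+k}$ and $P_k$, takes logs, and since $-\log p_m \leq (\alpha+\delta)(-\log r_m) \leq (\alpha+\delta)(1+\delta)\log m + O(1)$ for $m$ large (combining $p_m\geq r_m^{s+\epsilon}$ with $r_m\geq C m^{-(\alpha+\epsilon)}$), one gets $-\log\sum_{m=k}^{n+k}p_m \leq -\log P_k \leq$ something, while also $-\log \sum_{m=k}^{n+k}p_m\geq -\log p_{n+k}$; the case $n$ small versus $n$ large must be split, but in both the bound $-\log\sum_{m=k}^{n+k}p_m\leq (\alpha+\delta)(1+\delta)\log(k)+O(1)$ should follow from the monotone sandwich plus the polynomial two-sided bounds on $p_m,r_m$ with exponents $s\pm\epsilon$.

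Putting the two together, for $k\geq k_0$ and all $n$,
\begin{align*}
\frac{\log\sum_{m=k}^{n+k}p_m}{\log\sum_{m=k-1}^{n+k+1}r_m}
= \frac{-\log\sum_{m=k}^{n+k}p_m}{-\log\sum_{m=k-1}^{n+k+1}r_m}
\leq \frac{(1+\delta)(\alpha+\delta)\log k + O(1)}{(\alpha-1-\delta)\log k + O(1)},
\end{align*}
and one then checks by elementary algebra that for $\delta$ in the stated range the limsup of the right side is below $\frac{1+\delta}{\alpha-\delta}+\eta$ once $k_0$ is enlarged to absorb the $O(1)$ terms and the $\eta$ slack; here the constraint $\delta<(\alpha-1)/(\alpha+1)$ is exactly what keeps the denominator exponent $\alpha-1-\delta$ positive and makes the target ratio $\frac{1+\delta}{\alpha-\delta}$ dominate. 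The main obstacle I anticipate is bookkeeping the two-sided polynomial bounds for $p_m$ and $r_m$ with consistent $\epsilon$'s and getting the exponent arithmetic to land exactly on $\frac{1+\delta}{\alpha-\delta}$ rather than some nearby expression — i.e.\ choosing which small parameter ($\epsilon$ in $p_m\asymp r_m^{s\pm\epsilon}$, $\epsilon$ in $r_m\asymp m^{-(\alpha\pm\epsilon)}$) maps to the single $\delta$ in the statement, and verifying the $n$-uniform sandwich is genuinely tight enough; the dynamics plays no role here, this is a pure estimate on the sequences.
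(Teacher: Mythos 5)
There is a genuine gap here: the $n$-uniform ``sandwich'' you propose cannot produce the constant $\frac{1+\delta}{\alpha-\delta}$, and the final ``elementary algebra'' step is false. First, an exponent slip: from $p_m\geq r_m^{s+\epsilon}$ with $s=1/\alpha$ and $r_m\gtrsim m^{-(\alpha+\epsilon)}$ one gets $-\log p_m\leq (1+\delta)\log m+O(1)$, not $(\alpha+\delta)(1+\delta)\log m$. But even with this corrected, your two uniform bounds read $-\log\sum_{m=k}^{n+k}p_m\leq -\log p_k\leq (1+\delta)\log k+O(1)$ and $-\log\sum_{m=k-1}^{n+k+1}r_m\geq -\log\sum_{m\geq k-1}r_m\geq(\alpha-1-\delta)\log k-O(1)$, so the quotient is only bounded by a quantity tending to $\frac{1+\delta}{\alpha-1-\delta}$. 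Since $\frac{1+\delta}{\alpha-1-\delta}-\frac{1+\delta}{\alpha-\delta}=\frac{1+\delta}{(\alpha-1-\delta)(\alpha-\delta)}$ is a fixed positive number, this exceeds $\frac{1+\delta}{\alpha-\delta}+\eta$ as soon as $\eta$ is small, and the lemma must hold for every $\eta\in(0,1/2)$. The root of the failure is that your numerator bound (keeping one term of the block) is sharp only when the block $k,\dots,n+k$ is short, while your denominator bound (replacing the block by the whole tail) is sharp only when the block is long; pairing the worst cases from these two different regimes of $n$ overshoots the target, so no choice of the small parameters can rescue the argument.

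What is needed, and what the paper does, is a case split on $n$ versus $k$ that keeps both estimates in the same regime. For $n\geq k$ the numerator sum is not small: $\sum_{m=k}^{n+k}p_m\geq n(2k)^{-(1+\delta)}\geq 2^{-(1+\delta)}k^{-\delta}$, so its $-\log$ is only $\delta\log k+O(1)$, and against the tail bound $(\alpha-1-\delta)\log(k-2)-O(1)$ the ratio tends to $\frac{\delta}{\alpha-1-\delta}$, which lies below $\frac{1+\delta}{\alpha-\delta}$ precisely because of the stated restriction on $\delta$. For $n<k$ both sums consist of at most $n+3$ terms of comparable size: $\sum_{m=k}^{n+k}p_m\geq (n+1)(2k)^{-(1+\delta)}$ and $\sum_{m=k-1}^{n+k+1}r_m\leq 3(n+1)(k-1)^{-(\alpha-\delta)}$, so a $\log(n+1)$ appears in both numerator and denominator and is removed via the elementary inequality $\frac{a-c}{b-c}\leq\frac{a}{b}$ (valid when $b\geq a$ and the differences are positive), leaving $\frac{(1+\delta)\log(2k)}{(\alpha-\delta)\log(k-1)-\log 3}\to\frac{1+\delta}{\alpha-\delta}$. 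Note that in this second case the denominator is estimated term by term, with exponent $\alpha-\delta$ rather than the tail exponent $\alpha-1-\delta$: that is exactly the ``extra $1$'' your uniform approach gives away, and without recovering it the claimed inequality cannot be reached.
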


\begin{proof}
Recall that for such sequence $\{r_n\}$, we have that $s=1/\alpha$. Fix $0<\delta<\min\{1/3,s(\alpha-1)/(\alpha+1)\}$, $0<\eta<1/2$. Note that this implies that
\begin{align*}
    \dfrac{\delta}{\alpha-1-\delta}  < s = \dfrac{1}{\alpha} < \dfrac{1+\delta}{\alpha-\delta}.
\end{align*}

Now, since
\begin{align*}
    \lim_{k\to\infty}\dfrac{(1+\delta)\log 2 + \delta\log k}{\log(\alpha-1-\delta)+(\alpha-1-\delta)\log(k-2)} = \dfrac{\delta}{\alpha-1-\delta} < \dfrac{1+\delta}{\alpha-\delta}
\end{align*}
and
\begin{align*}
    \lim_{k\to\infty}\dfrac{(1+\delta)\log(2k)}{(\alpha-\delta)\log(k-1)-\log 3} = \dfrac{1+\delta}{\alpha-\delta},
\end{align*}
we can find $k_0\in\N$ such that
\begin{align*}
    \dfrac{(1+\delta)\log 2 + \delta\log k}{\log(\alpha-1-\delta)+(\alpha-1-\delta)\log(k-2)} &< \dfrac{1+\delta}{\alpha-\delta}+\eta \\
    \dfrac{(1+\delta)\log(2k)}{(\alpha-\delta)\log(k-1)-\log 3}&<\dfrac{1+\delta}{\alpha-\delta}+\eta
\end{align*}
for all $k\geq k_0$.  It can be proved using calculus that for $\delta<(\alpha-1)/2$, the inequality 
\begin{align*}
    (1+\delta)\log(2k) \leq (\alpha-\delta)\log(k-1)-\log 3
\end{align*}
holds for sufficiently large $k$, so we can take $k_0$ large enough so that this holds. Finally, we can take $k_0$ large enough so that we also have
\begin{align*}
     &r_k \leq \dfrac{1}{k^{\alpha -\delta}} \\
    &\dfrac{1}{k^{1+\delta}} \leq p_k 
\end{align*}
for all $k\geq k_0$. Let $n\in\N$. We divide in two cases:

\textbf{Case 1:} $n\geq k$. Then
\begin{align*}
    \sum_{m=k}^{n+k} p_m \geq \dfrac{n}{(2k)^{1+\delta}} \geq \dfrac{1}{2^{1+\delta}k^\delta}
\end{align*}
and
\begin{align*}
    \sum_{k-1}^{n+k+1} r_m \leq \sum_{m=k-1}^{n+k+1} \dfrac{1}{m^{\alpha-\delta}} \leq \sum_{m=k-1}^\infty \dfrac{1}{m^{\alpha-\delta}} \leq \dfrac{1}{\alpha-1-\delta}\left(\dfrac{1}{(k-2)^{\alpha-1-\delta}}\right)
\end{align*}
for all $k\geq k_0$. Then
\begin{align*}
    \dfrac{\log\displaystyle\sum_{m=k}^{n+k}p_m}{\log\displaystyle\sum_{m=k-1}^{n+k+1}r_m} &\leq \dfrac{(1+\delta)\log 2 + \delta\log k}{\log(\alpha-1-\delta)+(\alpha-1-\delta)\log(k-2)}  \leq \dfrac{1+\delta}{\alpha-\delta}+\eta
\end{align*}
for all $k\geq k_0$.

\textbf{Case 2:} $n<k$. Then
\begin{align*}
    \sum_{m=k}^{n+k} p_m \geq \dfrac{n+1}{(2k)^{1+\delta}} 
\end{align*}
and
\begin{align*}
    \sum_{k-1}^{n+k+1} r_m \leq \sum_{m=k-1}^{n+k+1} \dfrac{1}{m^{\alpha-\delta}} \leq \dfrac{n+3}{(k-1)^{\alpha-\delta}} \leq 3\dfrac{(n+1)}{(k-1)^{\alpha-\delta}}.
\end{align*}
Hence
\begin{align*}
    \dfrac{\log\displaystyle\sum_{m=k}^{n+k}p_m}{\log\displaystyle\sum_{m=k-1}^{n+k+1}r_m} &\leq \dfrac{(1+\delta)\log(2k)-\log(n+1)}{(\alpha-\delta)\log(k-1)-\log 3 - \log(n+1)}.
\end{align*}
We use the following Lemma:
\begin{lema}
For $a,b,c>0$ such that $a-c,b-c>0$, we have that
\begin{align*}
    \dfrac{a-c}{b-c}\leq \dfrac{a}{b}
\end{align*}
if and only if $b\geq a$.
\end{lema}
We can use this with $a=(1+\delta)\log(2k)$, $b=(\alpha-\delta)\log(k-1)-\log 3$ and $c=\log(n+1)$. This implies that
\begin{align*}
    \dfrac{\log\displaystyle\sum_{m=k}^{n+k}p_m}{\log\displaystyle\sum_{m=k-1}^{n+k+1}r_m} &\leq \dfrac{(1+\delta)\log(2k)}{(\alpha-\delta)\log(k-1)-\log 3} \leq \dfrac{1+\delta}{\alpha-\delta}+\eta.
\end{align*}
for all $k\geq k_0$, as we wanted to prove.
\end{proof}

With the previous lemma, we can now prove the upper bound for the upper local dimension. The proof is based on carefully choosing the covers of the balls; such covers must be fine enough so they are not affected by Proposition \ref{limsup}. This means that we want to cover the ball with cylinders of the same scale, otherwise, the cover would yield trivial bounds.

\begin{prop}
Suppose $T$ is a Gauss-like map and $\mu$ is an infinite entropy Gibbs measure satisfying assumption \ref{assumption}. Then 
\begin{align*}
\limsup_{r\to 0} \dfrac{\log \mu(B(x,r))}{\log r} \leq s
\end{align*}
for $\mu$ almost every $x\in I$.
\end{prop}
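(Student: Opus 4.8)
The plan is to bound the upper local dimension from above by producing, for \emph{every} small radius $r$, a cover of $B(x,r)$ by cylinders all of the same combinatorial scale, so that the coarse oscillation described in Proposition~\ref{limsup} cannot inflate the ratio. Fix a full-measure point $x$ with coding sequence $(a_n)$ where Theorem~\ref{markovexact}, Lemma~\ref{infinitelyap} and the Birkhoff theorem for the digit-frequency functions $f_{n,k}$ all hold. Given $r>0$, let $n=n(r)$ be the integer with $|I_{n}(x)|<r\le|I_{n-1}(x)|$, so that $B(x,r)\subseteq \hat I_{n-1}(x)$, the union of $I_{n-1}(x)$ with its two level-$(n-1)$ neighbours. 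The idea is \emph{not} to use $I_n(x)$ itself as the lower estimate (that would reintroduce the bad ratio $\log|I_n|/\log|I_{n-1}|$), but instead to cover $\hat I_{n-1}(x)$ by all the level-$n$ subcylinders $I(a_1,\dots,a_{n-1},j)$ with $j$ ranging over a block of indices around $a_n$, together with the two neighbouring level-$(n-1)$ cylinders and their level-$n$ subdivisions. The point is that $B(x,r)$ meets only boundedly many of these level-$n$ cylinders on each side before their union is already larger than $r$ — by the distortion estimate in Lemma~\ref{metricprop} and the comparability $K\le r_{n+1}/r_n\le K'$, the cylinders adjacent to $I_n(x)$ have comparable length to $I_n(x)$, so $B(x,r)$ is contained in a union $\bigcup_{m=k-1}^{n'+k+1} I(a_1,\dots,a_{n-1},m)$ for an appropriate window $[k,n'+k]$ of digits (with $k$ essentially $a_n$ and $n'$ bounded by how many such cylinders fit inside radius $r$).

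The next step is to convert the measure and diameter of this union into sums via Lemma~\ref{metricprop}(e),(b): up to the linear error terms $nG_1+G_2$ and $nD_1+D_2$ we have
\begin{align*}
\log\mu(B(x,r)) &\le \sum_{i=1}^{n-1}\log p_{a_i} + \log\sum_{m=k}^{n'+k} p_m + nG_1 + G_2,\\
\log r &\ge \log|I_n(x)| \ge \sum_{i=1}^{n-1}\log r_{a_i} + \log\sum_{m=k-1}^{n'+k+1} r_m - nD_1 - D_2.
\end{align*}
Now I split according to whether the "prefix'' part or the "tail window'' part dominates. Along any subsequence where $-\log r_{a_n}$ is comparable to or smaller than $-\sum_{i=1}^{n-1}\log r_{a_i}$, Theorem~\ref{markovexact} already forces the ratio down to near $s$ since the prefix Birkhoff sums grow faster than linear (so the $nG_i, nD_i$ terms are negligible) and $\sum\log p_{a_i}/\sum\log r_{a_i}\to s$. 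The genuinely new case is the subsequence where $-\log r_{a_n}$ dominates the prefix — exactly the situation created by Proposition~\ref{limsup}; there I discard the prefix sums and am left with $\log\bigl(\sum_{m=k}^{n'+k}p_m\bigr)\big/\log\bigl(\sum_{m=k-1}^{n'+k+1}r_m\bigr)$, which is precisely the quantity estimated by Lemma~\ref{ineqsums}: for $k$ large it is at most $\frac{1+\delta}{\alpha-\delta}+\eta$, and since $s=1/\alpha=s_\infty$ (Proposition~\ref{convexp} and the remark after Definition~\ref{decay}), letting $\delta,\eta\to 0$ gives the bound $s$. One must also check the mixed regime where both parts are comparable, which follows by the elementary mediant inequality (the same $\frac{a-c}{b-c}\le\frac{a}{b}$ type lemma used inside Lemma~\ref{ineqsums}) since both candidate ratios are $\le s+\varepsilon$.

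Combining the cases, $\limsup_{r\to 0}\log\mu(B(x,r))/\log r\le s+\varepsilon$ for every $\varepsilon>0$, hence $\overline d(x)\le s$ a.e.; together with the matching lower bound from the previous proposition this yields $\overline d(x)=s$ a.e. and $\dim_P\mu=s$ by the Mass Distribution Principle. The main obstacle I anticipate is bookkeeping the window of digits $[k-1,n'+k+1]$ correctly: one has to be sure that $B(x,r)$ really is contained in the union over that finite window (this needs the two-sided geometric control $K\le r_{n+1}/r_n\le K'$ to rule out a ball of radius $r$ reaching across many thin cylinders, and the orientation/ordering convention $I(n+1)<I(n)$ so that "neighbouring digit'' means "spatially adjacent cylinder''), and that the window parameters $k,n'$ are such that Lemma~\ref{ineqsums} applies uniformly — i.e. $k\to\infty$ along the relevant subsequence, which is exactly what Proposition~\ref{limsup} guarantees. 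Handling the two level-$(n-1)$ neighbour cylinders $I_{n-1}^{\ell},I_{n-1}^{r}$ symmetrically adds only harmless bounded multiplicative constants by the Gibbs property and assumption~\ref{assumption}.
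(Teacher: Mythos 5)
You have assembled the right ingredients (a window of same-level cylinders, Lemma \ref{ineqsums} for the window ratio, Theorem \ref{markovexact} for the prefix, Lemma \ref{infinitelyap} to absorb the linear error terms, and a split according to whether $-\log r_{a_n}$ dominates the prefix), and this is indeed the skeleton of the paper's argument. But the central estimate is set up with the containments, and hence the inequalities, pointing the wrong way, and that is a genuine gap rather than bookkeeping. Since $\log\mu(B(x,r))$ and $\log r$ are both negative, an upper bound on $\overline d(x)$ requires a \emph{lower} bound on $\mu(B(x,r))$ (a union of cylinders \emph{contained in} the ball) paired with an \emph{upper} bound on $r$ (a union of cylinders whose length exceeds $r$). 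Your two displays do the opposite: covering $B(x,r)$ only gives $\mu(B(x,r))\le\mu(\mathrm{cover})$, and $r>|I_n(x)|$ only gives $\log r\ge\log|I_n(x)|$; an upper bound on the negative numerator combined with a lower bound on the negative denominator bounds the ratio $\log\mu(B(x,r))/\log r$ from \emph{below}, so the subsequent reduction to Lemma \ref{ineqsums} proves nothing about $\overline d(x)$. In addition, the second display is false in exactly the regime that matters: $\log|I_n(x)|$ is $\sum_{i=1}^{n}\log r_{a_i}$ up to linear errors, whereas your right-hand side replaces $\log r_{a_n}$ by $\log\sum_{m=k-1}^{n'+k+1}r_m$, which can exceed $\log r_{a_n}$ by roughly $\log a_n$ (the window sum is comparable to the tail $\sum_{m\ge a_n-1}r_m$ when the ball reaches far into the short cylinders); along the subsequences produced by Proposition \ref{limsup} one has $\log a_n\gg n$, so the slack $nD_1+D_2$ cannot rescue the inequality.

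The correct implementation keeps your philosophy but inverts the containments. With $n=n(r)$ fixed by $|I_n(x)|<r\le|I_{n-1}(x)|$ and $k_0$ from Lemma \ref{ineqsums}, the paper distinguishes two cases: either $I(a_1,\dots,a_{n-1},k_0)\subset B(x,r)$, in which case $\mu(B(x,r))\ge\mu(I(a_1,\dots,a_{n-1},k_0))$ and $\log r\le\log|I_{n-1}(x)|$, and the prefix ratio together with Lemma \ref{infinitelyap} gives $s+\delta$; or it is not, in which case there is $k_1$ with $\bigcup_{m=1}^{k_1-1}I(a_1,\dots,a_{n-1},a_n-m)\subset B(x,r)$ while $\bigl|\bigcup_{m=0}^{k_1}I(a_1,\dots,a_{n-1},a_n-m)\bigr|>r$, yielding a measure lower bound and a radius upper bound whose window ratio is precisely the quantity controlled by Lemma \ref{ineqsums}; the requirement that the window start at an index $\ge k_0$ is supplied by the case assumption, not by Proposition \ref{limsup}, which only furnishes particular bad subsequences and cannot police every $r\to0$. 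Note also that the relevant window runs over the digits $a_n-m$, i.e.\ the spatially longer neighbours that the ball actually reaches on the side of decreasing digits, not over digits larger than $a_n$ as in your set-up, where the cylinders are tiny and neither the length comparison with $r$ nor the hypothesis of Lemma \ref{ineqsums} behaves as you need. As written, therefore, the proposal does not establish the proposition; repairing it amounts to redoing the argument with the containments reversed, which is the paper's proof.
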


\begin{proof}
Let $x$ be a point where Theorem \ref{markovexact} and Lemma \ref{infinitelyap} applied to $f=-\log r_{a_1}$ hold. 
Given $r>0$, there exists a unique natural number $n=n(r)$ such that
\begin{align*}
    |I_n(x)| < r \leq |I_{n-1}(x)|.
\end{align*}
Note that $n\to\infty$ as $r\to 0$. Let $\delta>0$ and $\eta$ as in Lemma \ref{ineqsums}. Then there exists $k_0\in\N$ such that
\begin{align*}
    \dfrac{\log\displaystyle\sum_{m=k}^{n+k}p_m}{\log\displaystyle\sum_{m=k-1}^{n+k+1}r_m} \leq \dfrac{1+\delta}{\alpha-\delta}+\eta
\end{align*}
for all $k\geq k_0$. Recall that by $I_n^{m\cdot r}(x)$ we denote the cylinder $I(a_1,\hdots,a_{n-1},a_n-m)$, where $(a_n)$ is the sequence coding $x$ and $m<a_n$. We separate the proof in two cases:

\textbf{Case 1:} 
\begin{align*}
    I(a_1,\hdots,a_{n-1},k_0) \subset B(x,r).
\end{align*}
In this case, using Lemma \ref{metricprop} we have that 
\begin{align*}
    \log(\mu(B(x,r))) &\geq \log(\mu(I(a_1,\hdots,a_{n-1},k_0))) \\
    & \geq \sum_{k=1}^{n-1} \log p_{a_k}+\log p_{k_0} -nG_1 -G_2.
\end{align*}
We get then
\begin{align}
    \dfrac{\log{\mu(B(x,r))}}{\log r}\leq \dfrac{\sum_{k=1}^{n-1} \log p_{a_k}+\log p_{k_0} -nG_1 -G_2}{\sum_{k=1}^{n-1} \log r_{a_k} +nD_1 +D_2} \nonumber \\
    \leq \dfrac{(s+\delta)\sum_{k=1}^{n-1} \log r_{a_k}+\log p_{k_0} -nG_1 -G_2}{\sum_{k=1}^{n-1} \log r_{a_k} +nD_1 +D_2} . \label{pack1}
\end{align}

\textbf{Case 2:} 
\begin{align*}
    I(a_1,\hdots,a_{n-1},k_0) \not\subset B(x,r).
\end{align*}
This implies that there exists $k_1\in\N$ such that
\begin{align*}
    \bigcup_{m=1}^{k_1-1} I_n^{k\cdot r}(x) &\subset B(x,r) , \\
    \Bigg| \bigcup_{m=0}^{k_1} I_n^{k\cdot r}(x) \Bigg| &> r
\end{align*}
and consequently
\begin{align*}
    \log(\mu(B(x,r))) &\geq \sum_{k=1}^{n-1}\log p_{a_k} + \log\left(\sum_{k=1}^{k_1-1} p_{a_n-k}\right) - nG_1 - G_2 \\
    \log r &\leq \sum_{k=1}^{n-1}\log r_{a_k} + \log\left(\sum_{k=0}^{k_1} r_{a_n-k}\right) + nD_1 + D_2.
\end{align*}
We obtain then
\begin{align*}
    \dfrac{\log(\mu(B(x,r)))}{\log r} \leq \dfrac{\sum_{k=1}^{n-1}\log p_{a_k} + \log\left(\sum_{k=1}^{k_1-1} p_{a_n-k}\right) - nG_1 - G_2}{\sum_{k=1}^{n-1}\log r_{a_k} + \log\left(\sum_{k=0}^{k_1} r_{a_n-k}\right) + nD_1 + D_2}.
\end{align*}
Using inequality \eqref{ineqsums}
\begin{align*}
    \dfrac{\log(\mu(B(x,r)))}{\log r} \leq \dfrac{\sum_{k=1}^{n-1}\log p_{a_k} + \left(\frac{1+\delta}{\alpha-\delta}+\eta \right)\log\left(\sum_{k=0}^{k_1} r_{a_n-k}\right) - nG_1 - G_2}{\sum_{k=1}^{n-1}\log r_{a_k} + \log\left(\sum_{k=0}^{k_1} r_{a_n-k}\right) + nD_1 + D_2}.
\end{align*}
For $\delta > 0$, there exist $n_0\in\N$ such that 
\begin{align*}
     \dfrac{-\sum_{k=1}^{n-1} \log q_{a_k}}{-\sum_{k=1}^{n-1} \log r_{a_k}}< s + \delta 
\end{align*}
for all $n\geq n_0$. We obtain
\begin{align}
    &\dfrac{\log(\mu(B(x,r)))}{\log r} \leq \dfrac{(s+\delta)\sum_{k=1}^{n-1}\log r_{a_k} + \left(\frac{1+\delta}{\alpha-\delta}+\eta \right)\log\left(\sum_{k=0}^{k_1} r_{a_n-k}\right) - nG_1 - G_2}{\sum_{k=1}^{n-1}\log r_{a_k} + \log\left(\sum_{k=0}^{k_1} r_{a_n-k}\right) + nD_1 + D_2} \nonumber\\
    &\leq \max\Bigg\{(s+\delta),\left(\frac{1+\delta}{\alpha-\delta}+\eta \right) \Bigg\}\cdot\dfrac{\sum_{k=1}^{n-1}\log r_{a_k} + \log\left(\sum_{k=0}^{k_1} r_{a_n-k}\right) - nG_1 - G_2}{\sum_{k=1}^{n-1}\log r_{a_k} + \log\left(\sum_{k=0}^{k_1} r_{a_n-k}\right) + nD_1 + D_2}.\label{pack2}
\end{align}
By Lemma \ref{infinitelyap} we have that the right hand side of \eqref{pack1} and \eqref{pack2} converge to
\begin{align*}
    (s+\delta) \ , \ \max\Bigg\{(s+\delta),\left(\frac{1+\delta}{\alpha-\delta}+\eta \right) \Bigg\}
\end{align*}
respectively. We conclude that
\begin{align*}
    \limsup_{r\to 0} \dfrac{\log(\mu(B(x,r)))}{\log r}\leq \max\Bigg\{(s+\delta),\left(\frac{1+\delta}{\alpha-\delta}+\eta \right) \Bigg\}.
\end{align*}
Letting $\delta\to 0$ and $\eta \to 0$, we obtain the desired result.
\end{proof}

\begin{coro}
For an infinite entropy Gibbs measure $\mu$ satisfying \eqref{conditions}, associated to a Gauss-like map, we have that $0=\underline{d}(x)<s= \overline{d}(x)$ for almost every point, and hence $\mu$ is not exact dimensional.
\end{coro}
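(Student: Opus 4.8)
The plan is to obtain the Corollary by simply combining the results of Sections~\ref{sec:haus} and~\ref{sec:pack}; all the real work has already been done. First I would fix a point $x$ in the intersection of the countably many full-measure sets on which Proposition~\ref{upperbound} and the two propositions of Section~\ref{sec:pack} hold simultaneously; this intersection still has full $\mu$-measure.

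For the lower local dimension, Proposition~\ref{upperbound} gives
\begin{align*}
\underline{d}(x)=\liminf_{r\to 0}\frac{\log\mu(B(x,r))}{\log r}\leq \hat{s}
\end{align*}
at $\mu$-a.e.\ $x$. Since a Gauss-like map has $\{r_n\}$ decaying polynomially, the lemma computing the tail decay ratio applies and yields $\hat{s}=0$; as the local dimension is always non-negative, this forces $\underline{d}(x)=0$ for $\mu$-a.e.\ $x$. For the upper local dimension, the two propositions of Section~\ref{sec:pack} give respectively $\overline{d}(x)\geq s$ and $\overline{d}(x)\leq s$ at $\mu$-a.e.\ $x$, hence $\overline{d}(x)=s$ a.e.

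It remains to note that $s>0$: by condition~(5) in the definition of a Gauss-like map, $\{r_n\}$ decays polynomially with exponent $\alpha>1$, so by the remark after Definition~\ref{decay} together with Proposition~\ref{convexp} (using $h_\mu=\infty$) we have $s=s_\infty=1/\alpha>0$. Therefore $0=\underline{d}(x)<s=\overline{d}(x)$ on a set of full $\mu$-measure, and since $\underline{d}\neq\overline{d}$ a.e.\ the measure $\mu$ is not exact dimensional. Applying the local Mass Distribution Principle one additionally reads off $\dim_H\mu=0$ and $\dim_P\mu=s$, though this is not part of the statement.

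As for where the difficulty lies: at the level of the Corollary there is essentially none — it is pure bookkeeping once the inputs are in place. The genuine obstacle was already absorbed into Section~\ref{sec:pack}, specifically the upper bound $\overline{d}(x)\leq s$: covering a ball by a single cylinder does not work because, by Proposition~\ref{limsup}, cylinder lengths can contract arbitrarily fast between consecutive levels, so one is forced to cover balls by cylinders of comparable scale, which is precisely what Lemma~\ref{ineqsums} was designed to control.
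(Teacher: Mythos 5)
Your proposal is correct and matches the paper's (implicit) argument: the corollary is stated without proof precisely because it is immediate from Proposition \ref{upperbound} together with the vanishing of $\hat{s}$ for polynomially decaying $\{r_n\}$ and the two propositions of Section \ref{sec:pack}, exactly as you assemble it. Your additional observation that $s=s_\infty=1/\alpha>0$ justifies the strict inequality $0<s$ and is consistent with the paper's intent for Gauss-like maps.
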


With this we have found the almost sure behavior of the local dimensions, and hence, we have obtained values for both the packing and the Hausdorff dimension.

\section{Final remarks}

Theorem \ref{mainteo} implies that for maps such that $\{r_n\}$  decays polynomially, the Hausdorff dimension of ergodic invriant measures with infinite entropy is equal to zero under mild independence and regularity assumptions on the measure. 

\begin{que}
Is there an ergodic invariant measure $\mu$ for a Gauss-like map with $h_\mu=\lambda_\mu=\infty$, $r_n\neq p_n$ and $\dim_H \mu >0$?
\end{que}

The condition $r_n\neq p_n$ rules out the Lebesgue measure, which clearly has dimension equal to $1$. We can construct such measure if we assume that $r_n$ decays slower than polynomial.

We also formulate two questions for a more general case:

\begin{que}\label{Q2}
What can be said about the almost sure value of the symbolic dimension when $\mu$ is only assumed to be ergodic?
\end{que}

\begin{que}\label{Q3}
What can be said about $\dim_H \mu$ when $\mu$ is only assumed to be ergodic?
\end{que}

The main difficulty with question \ref{Q2} is that our methods rely on the asymptotic independence of the digits in the symbolic space. This implies that we can write the measure and diameter of cylinders in the form of Birkhoff sums, allowing us to use ergodic theoretic methods to study the almost sure behavior of such sums.

On the other hand, the main difficulty with question \ref{Q3} is that one of the main ergodic theoretic tools we use (Theorem \ref{aarnak}) assumes the process $\{X_n\}$ is CF-mixing. For measures which do not satisfy any kind of independence assumption, we are not able to use such techniques.

\subsection*{Acknowledgements} The author would like to thank his advisor T. Jordan for suggesting the problem and all the valuable suggestions. The author would also like to thank G. Iommi, C. Lutsko and M. Todd for their valuable comments about the first draft of this article. The research leading to these results was partially supported the Becas Chile scholarship scheme from CONICYT.

\bibliographystyle{alpha}
\bibliography{infinite}

\end{document}